\newcommand{\C}{\mathbf{C}}
\newcommand{\N}{\mathbf{N}}
\newcommand{\la}{\lambda}
\newcommand{\bfP}{\mathbf{P}}
\newcommand{\cc}{\mathbf{C}}
\newcommand{\re}{\mathbf{R}}
\newcommand{\dd}{\mathbf{D}}
\newcommand{\zz}{\mathbf{Z}}
\newcommand{\nn}{\mathbf{N}}
\newcommand{\cv}{\rightarrow}
\newcommand{\fr}{\partial}
\newcommand{\set}[1]{\left\{#1\right\}}
\newcommand{\norm}[1]{\left\Vert#1\right\Vert}
\newcommand{\abs}[1]{\left\vert#1\right\vert}
\newcommand{\hatcheck}[1]{\hspace{.2em}\widehat{\hspace{-.2em}\check#1}}
\newcommand{\cd}{\cc^2}
\newcommand{\pd}{{\mathbf{P}^2}}
\newcommand{\pu}{{\mathbf{P}^1}}
\newcommand{\rest}[1]{ \arrowvert_{#1}}
\newcommand{\m}{{\bf M}}
\newcommand{\unsur}[1]{\frac{1}{#1}}
\newcommand{\el}{\mathcal{L}}
\newcommand{\qq}{\mathcal{Q}}
\newcommand{\itm}{\item[-]}
\newcommand{\geom}{\dot{\wedge}\hspace{.15em}}
\newcommand{\xcd}{\hat{X}^{\mathrm{dist}} }
\newcommand{\e}{\varepsilon}
\DeclareMathOperator{\area}{Area}
\DeclareMathOperator{\supp}{Supp}
\newtheorem{lem}{Lemma}[section]
\newtheorem{pro}[lem]{Proposition}
\newtheorem{defi}[lem]{Definition}
\newtheorem{thm}[lem]{Theorem}
\newtheorem{cor}[lem]{Corollary}
\newtheorem{theo}{Theorem}
\newtheorem{coro}[theo]{Corollary}
\theoremstyle{remark}
\newtheorem{rqe}[lem]{Remark}
\newtheorem{exa}[lem]{Example}
\date{\today}
\title[Geometric currents and ergodic theory]{Dynamics of meromorphic maps with
small topological degree III: geometric currents and ergodic theory}
\author{Jeffrey Diller \and Romain Dujardin \and Vincent Guedj}
\address{Department of Mathematics\\
         University of Notre Dame\\
         Notre Dame, IN 46556}
\email{diller.1@nd.edu}
\address{CMLS \\ \'Ecole Polytechnique \\ 91128 Palaiseau  \\France}
\email{dujardin@math.polytechnique.fr}
\address{Universit\'e Aix-Marseille 1,\\ LATP, CMI, Technop\^ole de Ch\^ateau Gombert \\ 
39 rue F.Joliot-Curie \\ 13453 MARSEILLE Cedex 13, \\  France}
\email{guedj@cmi.univ-mrs.fr}
\thanks{J.D. acknowldeges the National Science Foundation for its support through grant DMS 06-53678.}
\thanks{R.D. was partially supported by ANR through project BERKO}
\keywords{dynamics of meromorphic mappings, laminar and woven currents, entropy, natural extension}
\subjclass[2000]{37F10, 32H50, 32U40, 37B40, 37D99}
\begin{document}

\selectlanguage{english}
\maketitle
\begin{abstract}
We continue our study of the dynamics of mappings with small topological degree on projective complex surfaces. Previously, under mild hypotheses, we have constructed an ergodic ``equilibrium'' measure for each such mapping.  Here we study the dynamical properties of this measure in detail: we give optimal bounds for its Lyapunov exponents, prove that it has maximal entropy, and show that it has product structure in the natural extension. Under a natural further assumption, we show that saddle points are equidistributed towards this measure. 
This generalizes results that were known in the invertible case  and adds to the small number of situations 
 in which a natural invariant measure for a non-invertible dynamical system is well-understood.
\end{abstract}
% 
%\selectlanguage{frenchb}
%\begin{abstract}
% Nous poursuivons notre \'etude de la dynamique des applications rationnelles de 
%petit degr\'e topologique sur les surfaces complexes projectives. Dans un
% travail pr\'ec\'edent nous avons construit une mesure ergodique naturelle, 
%dite ``d'\'equilibre'', sous des hypoth\`eses tr\`es g\'en\'erales. Nous \'etudions
% maintenant en d\'etail les propri\'et\'es dynamiques de cette mesure: 
%nous donnons des bornes optimales pour ses exposants de Lyapounov, montrons qu'elle
% est d'entropie maximale et qu'elle a une structure produit dans l'extension naturelle. 
%Sous une hypoth\`ese suppl\'ementaire naturelle, nous montrons que
%cette mesure d\'ecrit la r\'epartition des points selles. Ceci g\'en\'eralise des r\'esultats 
%qui \'etaient auparavant connus dans le cas inversible et 
%vient  ainsi s'ajouter au petit nombre de situations o\`u une mesure invariante naturelle pour un 
%syst\`eme dynamique non inversible est vraiment bien comprise. 
% 
%\end{abstract}

\selectlanguage{english}

\section*{Introduction}

In this article we continue our investigation, begun  in \cite{part1, part2}, of dynamics on complex surfaces for rational transformations with small topological degree.  Our previous work culminated in the construction of a 
canonical mixing invariant measure for a very broad class of such mappings.  We intend now to  study in detail the nature of this measure.  As we will show, the measure meets conjectural expectations concerning, among other things, Lyapunov exponents, 
entropy, product structure in the natural extension, and equidistribution of saddle orbits.
\medskip

Before entering into the details of our results, let us recall our setting. Let $X$ be a complex
projective surface (always compact and connected), and $f:X\to X$ be a rational mapping.  Our main requirement is that $f$ has \emph{small topological degree}: 
\begin{equation}
\la_2(f) < \la_1(f).
\end{equation}
Here the topological (or second dynamical) degree $\la_2(f)$ is the number of preimages
of a generic point, whereas the first dynamical degree $\la_1(f) :=\lim \norm{(f^n)^*\rest{H^{1,1}(X)}}^{1/n}$
measures the asymptotic volume growth of preimages of curves under iteration of $f$. 
We refer the reader to \cite{part1} for a more precise discussion of dynamical degrees. In particular it was observed there that the existence of maps with small topological degree imposes some restrictions on the ambient
surface: either $X$ is rational (in particular, projective), or $X$ has Kodaira dimension zero.

Let us recall that the ergodic theory of mappings with large topological degree ($\la_2>\la_1$) has been extensively 
studied, and that results analogous to our Theorems \ref{thm:complete} and \ref{thm:saddle} are true in this context
\cite{briend-duval, ds, g}. In dimension 1, all rational maps have large topological degree, and in this setting these results are due to \cite{lyubich, flm}. We note also that in the birational case $\lambda_2=1$, the main results of this paper are obtained in \cite{bls,bls2} 
(for polynomial automorphisms of 
$\cd$) in \cite{cantat} (for automorphisms of projective surfaces)  and in \cite{birat} (for general birational maps). Hence the focus here is on noninvertible mappings which, as the reader will see, present substantial additional difficulties.

We will work under two additional assumptions, which we now introduce.

\medskip

\noindent {\bf Good birational model.}
We need to assume that the linear actions $(f^n)^*$
induced by $f^n$ on cohomology are compatible with the dynamics, i.e.
\begin{equation}
 \tag{\bfseries H1} 
(f^n)^*=(f^*)^n,
\text{ for all } n \in \N.
\end{equation}
This condition, often called 
 ``algebraic stability'' in the literature, was first considered by Fornaess and Sibony \cite{fs94}. There is some evidence  that for any 
mapping $(X,f)$ with small topological degree, there should exist a birationally conjugate mapping $(\tilde X,\tilde f)$ that satisfies (H1).  Birational conjugacy does not affect 
dynamical degrees, so in this case we simply replace the given system $(X,f)$ with the ``good birational model'' $(\tilde X,\tilde f)$.

We observed in \cite{part1} that the minimal model for $X$ is a good birational model when $X$ has Kodaira dimension zero.  For rational $X$, there is a fairly explicit blowing up proceedure \cite{df} that produces a good model when $\lambda_2=1$.  More recently, Favre and Jonsson \cite{fj} have prove that each polynomial mapping of $\cd$ with small topological degree admits a good model on passing to an iterate.

Under assumption (H1), in \cite{part1}, we have constructed and studied  canonical invariant
currents $T^+$ and $T^-$. These are defined  by
$$
T^+=\lim \frac{c^+}{\la^n} (f^n)^* \omega
\text{ and }
T^-=\lim \frac{c^-}{\la^n} (f^n)_* \omega
$$
where $\omega$ is a fixed K\"ahler form on $X$ and $c^\pm$ are normalizing constants chosen so that in cohomology $\{T^+\} \cdot \{T^-\}=\set{\omega}\cdot\set{T^-} =\set{\omega}\cdot\set{T^+}=1$.   A fact of central importance to us is that these currents have additional geometric structures: $T^+$ is  laminar, while $T^-$ is woven (see below \S \ref{sec:geom} for definitions).

\medskip

\noindent {\bf Finite energy.}
Let $I^+$ denote the indeterminacy set of $f$, i.e. the collection of those points that $f$ ``blows up'' to curves; and let $I^-$ denote the analogous set of points which are images of curves under $f$.  The invariant current $T^+$ (resp. $T^-$) typically has positive Lelong number at each point of the extended indeterminacy set $I^+_{\infty}=\bigcup_{n \geq 0} f^{-n} I^+$ (resp. $I^-_{\infty}=\bigcup_{n \geq 0} f^{n} I^-$).
Condition (H1) is equivalent to asking that the sets $I^+_{\infty}$ and $I^-_{\infty}$ be disjoint. 

In order to give meaning to and study the wedge product $T^+\wedge T^-$, it is desirable to have more
quantitative control on how fast these sets approach one another.  This is how our next hypothesis should 
be understood:
\begin{equation}
 \tag{\bfseries H2} 
f \text{ has finite dynamical energy.}
\end{equation} 
We refer the reader to \cite{part2} for a precise definition of finite energy and its relationship with recurrence 
properties of indeterminacy points.  In that article we proved the following theorem.

\begin{theo}[\cite{part2}]\label{theo:ddg2}
 Let $f$ be a meromorphic map with small topological degree on a projective surface, satisfying hypotheses (H1) and (H2). Then  
the wedge product $\mu:=T^+ \wedge T^-$ is a well-defined probability invariant measure that is  $f$-invariant and mixing.  Furthermore the wedge product is described by the geometric intersection of the laminar/woven structures of $T^+$ and $T^-$.
\end{theo}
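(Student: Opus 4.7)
The plan is to construct $\mu = T^+ \wedge T^-$ via pluripotential theory, establish its dynamical properties from the defining limits of $T^\pm$, and finally identify the analytic definition with a geometric one coming from the laminar and woven structures.

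First I would write $T^\pm = \omega + dd^c u^\pm$ with $\omega$-psh potentials $u^\pm$, normalized by the cohomological conditions, and try the standard Bedford--Taylor-type definition $T^+ \wedge T^- := T^+ \wedge \omega + dd^c(u^- T^+)$, symmetric in the two roles. The obstruction is that $u^\pm$ may be as singular as $-\infty$ along $I^\pm_\infty$, so the local integrability of $u^-$ with respect to $T^+$ must be obtained globally. The finite energy hypothesis (H2) should be designed precisely so that $\int (-u^-)\, T^+\wedge\omega < \infty$ (and symmetrically); approximating $u^\pm$ by decreasing sequences of smooth $\omega$-psh functions $u^\pm_k$ and controlling cross terms by a Cauchy--Schwarz-type energy estimate, one obtains weak convergence of $(\omega + dd^c u^+_k) \wedge (\omega + dd^c u^-_k)$ to a positive measure independent of the approximants. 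Integrating the constant $1$ against this measure yields the cohomological mass $\{T^+\}\cdot\{T^-\} = 1$, so $\mu$ is a probability measure.

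For $f$-invariance, the key identities are $f^*T^+ = \lambda_1 T^+$ and $f_*T^- = \lambda_1 T^-$, valid under (H1), together with a projection-formula-style equality $f_*(f^*T^+ \wedge T^-) = T^+ \wedge f_*T^-$ holding outside indeterminacy. Dividing by $\lambda_1^2$ gives $f_*\mu = \mu$. To promote this to genuine invariance on all of $X$, one needs $\mu(I^\pm_\infty) = 0$, which should follow from finite energy combined with upper semi-continuity of Lelong numbers. For mixing, I would aim to show that for a smooth test $\varphi$,
\[
\frac{1}{\lambda_1^n} (f^n)^*(\varphi\,\omega) \longrightarrow \Bigl(\int \varphi\, d\mu\Bigr) T^+
\]
in a sense strong enough to pair against $T^-$; this reduces, after decomposing $\varphi$ as a difference of quasi-psh functions, to a quantitative $L^1$-convergence rate for the potentials of $\lambda_1^{-n}(f^n)^*\omega$ to that of $T^+$, derived from the contraction properties of $\lambda_1^{-1} f^*$ on normalized potentials.

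Finally, to identify $\mu$ with the geometric intersection, one writes $T^+ = \int [\Delta_\alpha]\, d\nu^+(\alpha)$ as an integral over a measurable family of holomorphic disks (laminar structure) and $T^- = \int [\Delta_\beta]\, d\nu^-(\beta)$ as an integral over a branching family (woven structure). One then defines the geometric wedge product $T^+ \geom T^-$ by integrating the transverse intersection number $\#(\Delta_\alpha \cap \Delta_\beta)$ against $d\nu^+\otimes d\nu^-$, and shows that for truncations by compatible flow boxes the geometric intersections give exactly the Bedford--Taylor wedge product on each box, then sums. I expect the hardest step to be precisely this last identification: the birational case (Bedford--Lyubich--Smillie, Dujardin) rests on the fact that both currents are laminar with uniformly geometric leaves, whereas here $T^-$ is only woven, so its leaves may branch and accumulate near $I^-_\infty$. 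Controlling this branching well enough to make transverse intersection with leaves of $T^+$ meaningful \emph{and} to match the analytic wedge product is the real technical core, and is where the finite energy assumption (H2) must again be exploited to rule out pathological concentration of mass.
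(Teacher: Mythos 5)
This theorem is not actually proved in the present paper: it is quoted from \cite{part2}, and the relevant construction is only summarized here (\S\ref{subs:markings}--\ref{subs:geom}), so your proposal has to be measured against that construction. Measured that way, its foundational step fails. You define $T^+\wedge T^-$ in the classical Bedford--Taylor fashion on the premise that the finite energy hypothesis (H2) yields $\int(-u^-)\,T^+\wedge\omega<\infty$, i.e.\ that the potential of each current is integrable against the other. But (H2) is weaker than that, and the paper warns explicitly in \S\ref{subs:geom} that in \cite{part2} ``$T^+\wedge T^-$ is not always defined in the usual $L^1_{\rm loc}$ fashion,'' and that it is not even clear whether $T^+_Q\in L^1_{\rm loc}(T^-_Q)$ on individual cubes (this only holds after a negligible modification of the markings). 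The actual construction goes through a finite-energy intersection theory for currents with unbounded potentials \emph{combined with} the geometric structure: one extracts uniformly laminar/woven pieces $T^{\pm}_{\qq_i}$ from approximating curves of slowly growing genus, forms the geometric products $T^+_{\qq_i}\geom T^-_{\qq_i}$, and the central result of \cite[\S 2.3]{part2} is that these increase to a probability measure independent of all choices, the energy hypothesis being what rules out loss of mass in the limit. So the geometric description is not, as in your last step, an a posteriori identification of an already-defined analytic wedge product with a separately defined geometric one; it is built into the definition and into the proof that the limiting measure has mass $1$. You correctly single out the woven structure of $T^-$ as the technical core, but you offer no mechanism for it, while the mechanism you do rely on for the first step (classical $L^1_{\rm loc}$ integrability from (H2)) is precisely what is unavailable.

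The softer parts of your plan are closer to the mark. The invariance argument via $f^*T^+=\la_1 T^+$, $f_*T^-=\la_1 T^-$ and the projection formula is the right idea (note the normalization: one divides by $\la_1$, not $\la_1^2$, and one must justify the projection formula for these singular products and check that $\mu$ charges neither indeterminacy nor critical sets). The mixing strategy --- convergence of $\la_1^{-n}(f^n)^*(\varphi\,\omega)$ to $\bigl(\int\varphi\,\omega\wedge T^-\bigr)\,T^+$ in a topology strong enough to pair with $T^-$ --- is also the route taken in \cite{part2}, but ``strong enough'' is again supplied by the energy estimates, not by a quantitative $L^1$ rate for the potentials, which is neither proved nor needed there. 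As written, then, the proposal has a genuine gap at the definition of $\mu$ and in the no-mass-loss statement underlying the geometric description, which are the two points the cited proof is really about.
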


 The notion of ``geometric intersection'' will be described  at length in \S \ref{sec:geom}.

\medskip
\begin{center}$\diamond$\end{center}
\medskip

We can now state the main results of this article. Let us emphasize that they  rely on the hypotheses (H1) and (H2) only through the conclusions of Theorem \ref{theo:ddg2}.  Taking these conclusions as a starting point, one can read the proofs given here independently of \cite{part1, part2}.

\begin{theo}\label{thm:complete}
Let $X$ be a complex projective surface and $f: X \rightarrow X$ be a rational map with small topological degree. 
Assume that $f$ satisfies the conclusions of  Theorem \ref{theo:ddg2}. Then the canonical invariant measure
$\mu = T^+\wedge T^-$ has the following properties:
\begin{enumerate}[i.]
 \item For $\mu$-a.e. $p$ there exists a nonzero tangent vector $e^s$  at $p$, 
 such that
\begin{equation}\label{eq:es}
\limsup_{ n\cv\infty} \unsur{n} \log\abs{df^n(e^s(p))}\leq 
- \frac{\log(\lambda_1/\lambda_2)}{2}.
\end{equation}
 \item Likewise, for $\mu$-a.e. $p$ there exist a  tangent vector $e^u$  at
$p$, and a set of integers $\nn'\subset \nn$ of density 1  
such that 
\begin{equation}\label{eq:eu}
\liminf_{ \nn'\ni n \cv\infty} \unsur{n} \log\abs{df^n(e^u(p))}\geq 
 \frac{\log\lambda_1}{2}.
\end{equation}
\item $\mu$ has entropy $\log \lambda_1$; thus it has maximal entropy and $h_{\rm top}(f)= \log\lambda_1$.
\item The natural extension of $\mu$ has local product structure. 
\end{enumerate}
\end{theo}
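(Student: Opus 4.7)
The plan is to treat the four conclusions in sequence, relying throughout on the geometric description of $\mu$ furnished by Theorem~\ref{theo:ddg2}: $\mu$ is the transverse intersection of the laminar current $T^+$ with the woven current $T^-$, under the cohomological normalizations $f^{*}T^+=\lambda_1 T^+$ and $f_{*}T^-=\lambda_1 T^-$.

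For parts (i) and (ii) I would estimate the growth of areas of iterates of disks from the $T^\pm$-structures. A flow box for $T^+$ admits $\lambda_1^n$ pullback flow boxes (counted with transverse multiplicity) whose total $T^+$-mass equals that of the original, while the total area of the $f^n$-images of their laminae is cohomologically controlled by $\lambda_2^n$. Averaging shows that for a $\mu$-typical holomorphic disk $\Delta^s$ from the laminar structure of $T^+$ one has $\operatorname{area}(f^n(\Delta^s))\lesssim(\lambda_2/\lambda_1)^n$; since $f^n\rest{\Delta^s}$ is holomorphic, this area estimate converts into the pointwise bound \eqref{eq:es} on $\abs{df^n(e^s)}$, with $e^s$ tangent to $\Delta^s$. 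Part (ii) is formally dual, swapping pullback and pushforward (and the roles of $T^+$ and $T^-$); however, because wovenness is strictly weaker than laminarity---webs may cross rather than be disjoint---one only controls the relevant averages, which accounts for the restriction to a density-one subsequence $\nn'$ in \eqref{eq:eu}.

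For part (iii), the upper bound $h_{\mathrm{top}}(f)\leq\log\lambda_1$ is the Gromov--Yomdin inequality, extended by Dinh--Sibony to the meromorphic surface setting. For the lower bound $h_\mu(f)\geq\log\lambda_1$ I would exploit the geometric structure of $\mu$ directly: a careful counting of the $f^{-n}$-pullbacks of a flow box for $T^+$, weighted by their transverse $T^+$-mass, should produce $\lambda_1^{n(1-\e)}$ dynamically distinguishable families inside $\supp\mu$, each of positive $\mu$-measure, whose forward orbits of length $n$ are pairwise $(n,\e)$-separated. Katok's entropy formula then yields the desired lower bound.

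Finally, for part (iv) I would work in the natural extension $\hat f:\hat X\to\hat X$ and construct, at a $\hat\mu$-typical history $\hat p=(p_0,p_{-1},\dots)$, a local stable manifold $W^s(\hat p)$ coming from the laminar disk of $T^+$ through $p_0$, and a local unstable manifold $W^u(\hat p)$ defined as the limit of the univalent branches of $f^k$ along the history, mapping a web disk of $T^-$ through $p_{-k}$ to a disk through $p_0$. The expansion estimate (ii) along these branches, together with the finite-energy hypothesis (which prevents histories from accumulating on indeterminacy), guarantees convergence to a bona fide piece of $T^-$. The pair $(W^s,W^u)$ then provides local charts in which $\hat\mu$ disintegrates as the product of the transverse measures of $T^+$ and $T^-$, because $\mu=T^+\wedge T^-$ is realized as a genuine geometric intersection. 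I expect the main technical difficulty to lie precisely here: since $T^-$ is only woven, the web disk through a given $p_{-k}$ is not canonical, so the unstable manifold must be selected dynamically along each history in $\hat X$, and one must verify that this selection is coherent, measurable, and compatible with the transverse measure of $T^-$, so that the local product structure legitimately follows from the wedge-product description of $\mu$.
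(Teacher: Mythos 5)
Your part (i) follows the paper's own route (the ``\`a la Lyubich'' transverse-mass estimate for forward images of disks subordinate to $T^+$, converted into \eqref{eq:es} by the Briend--Duval area--diameter estimate), and your upper bound in (iii) via Gromov is also the paper's. But the claim that (ii) is ``formally dual'' is a genuine gap, and it is the central one. The Lyubich-type counting argument only ever proves \emph{contraction}; applied to $T^-$ it can at best show that most of the $\lambda_2^n$ inverse branches of $f^n$ along a web disk contract, in a counting or averaged sense. Since $f$ is not invertible, there is no symmetry between preimages along $\mu$: as Example \ref{ex:henon} shows (a small perturbation of a H\'enon map with $1<\lambda_2<\lambda_1$), almost all preimages of points of $\supp(\mu)$ may escape $\supp(\mu)$, so the exceptional inverse branches could carry \emph{full} measure in the natural extension, and an ``all but a small fraction of branches'' statement gives no control on $\hat\mu$-a.e.\ history. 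To obtain \eqref{eq:eu} one must single out the distinguished inverse branches inherited from the natural branches on the approximating curves $f^k(L)$ and prove that distinguished histories have full $\hat\mu$-measure (Proposition \ref{prop:positive}); nothing in your sketch addresses this, and the density-one set $\nn'$ in fact comes from a return-time/averaging lemma (Lemma \ref{lem:elementary} applied to the sets $\check A_\e(\ell)$), not from the mere fact that web disks may cross.

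The same omission undermines (iii) and (iv). For the entropy lower bound you propose counting pullback families of a flow box and invoking Katok's formula, but to show that $\sim\lambda_1^{n}$ such families are $(n,\e)$-separated and each carries $\mu$-mass comparable to $\lambda_1^{-n}$ you need precisely the description of the unstable conditionals of $\hat\mu$ that is missing from your argument; the paper instead builds the tautological extension $(\check X,\check\mu,\check f)$, identifies its natural extension with that of $(X,\mu,f)$ (Proposition \ref{prop:hatcheck}), proves that the unstable conditionals of $\hat\mu$ are induced by $T^+$ (Theorem \ref{thm:conditional}), and computes $h_{\hat\mu}(\hat f)=\log\lambda_1$ by the Rokhlin formula with a Ledrappier--Strelcyn invariant partition. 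For (iv) you correctly identify the difficulty (the web disk through $p_{-k}$ is not canonical and must be selected coherently along histories), but you then assert that the product disintegration ``follows from the wedge-product description of $\mu$''; this is exactly what has to be proved. In particular the holonomy invariance of the \emph{stable} conditionals of $\hat\mu$ cannot be read off from $\mu=T^+\wedge T^-$ because of the asymmetry between preimages: the paper needs the analytic continuation property for woven currents (Theorem \ref{thm:cont}), the uniqueness of contracted disks along histories (Proposition \ref{lem:separ}), the reinforced product structure of $\check\mu$, and a $\sigma$-algebra approximation argument in $\hat X$ to conclude. So, beyond (i) and the entropy upper bound, the proposal restates the goals rather than supplying proofs, and the key missing idea throughout is the selection of distinguished inverse branches together with the tautological/natural extension machinery.
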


In particular it follows from {\em iv.} and the work of Ornstein and
Weiss \cite{ow} (see also  Briend \cite{briend} for useful remarks on the
adaptation to the noninvertible case) that the natural extension of $\mu$ has the Bernoulli property,
hence $\mu$ is mixing to all orders and has the $K$ property. 
 A precise definition of local product structure will be given below in \S 
\ref{sec:product}. This is  the analogue of the balanced property of the maximal measure in the large topological degree case. 

\medskip

Let us stress that we do not assume that $\log \mathrm{dist}(\cdot,  I^+\cup C_f)$ is $\mu$ integrable ($C_f$ denotes the critical set). This condition is usually imposed to guarantee the existence of Lyapunov exponents and applicability of the Pesin theory of non-uniformy hyperbolic dynamical systems.  However, for mappings with small topological degree, it is known to fail in general (see \cite[\S 4.4]{part2})).  This contrasts with the large topological degree case, in which the maximal entropy measure integrates all quasi-psh functions.

When the Lyapunov exponents $\chi^+(\mu) \geq \chi^-(\mu)$ are well defined, then
{\em (i)} and {\em (ii)} imply that
$$
\chi^+(\mu) \geq \frac{1}{2} \log \la_1(f) >0
> -\frac{1}{2} \log \la_1(f)/\la_2(f) \geq \chi^-(\mu),
$$
hence the measure $\mu$ is hyperbolic. These bounds are optimal and were conjectured  in \cite{g05b}.

In order to go further and relate $\mu$ to the distribution of saddle periodic points, we use Pesin theory and must therefore invoke the above integrability hypothesis.

\begin{theo}\label{thm:saddle}
Under the assumptions of Theorem \ref{thm:complete}, assume further that
\begin{equation}
 \tag{\bfseries H3}  p\mapsto \log \mathrm{dist}(p, I^+\cup C_f)\in L^1(\mu),
\end{equation}
where $C_f$ is the critical set.

Then, for every $n$ there exists a set $\mathcal{P}_{n}\subset \supp(\mu)$ of saddle periodic points 
of period $n$, with $\#\mathcal{P}_{n}\sim \lambda_1^n$, and  such that
$$
\unsur{\lambda_1^n}\sum_{q\in \mathcal{P}_n} \delta_q\longrightarrow \mu.
$$

Let $\mathrm{Per}_n$ be the set of all isolated periodic points of $f$ of period $n$. If furthermore
\begin{itemize}
\item[-] $f$ has no curves of periodic points,
\item[-] or $X=\pd$ or $\pu\times \pu$,
\end{itemize}
then $\#\mathrm{Per}_n \sim \lambda_1^n$, so that asymptotically nearly all periodic points are saddles.
\end{theo}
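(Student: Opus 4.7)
The plan is to combine Theorem~\ref{thm:complete} with (H3) to apply Pesin theory to $\mu$, use the local product structure of Theorem~\ref{thm:complete}(iv) to manufacture and equidistribute saddle periodic orbits via transverse intersections, and then match the resulting lower bound on $\#\mathcal{P}_n$ with an upper bound on $\#\mathrm{Per}_n$ coming from the Lefschetz fixed point formula. Under (H3), the Oseledets theorem yields well-defined Lyapunov exponents for $\mu$, and combined with parts (i) and (ii) of Theorem~\ref{thm:complete} these satisfy
$$
\chi^+(\mu)\ge \frac{\log\lambda_1}{2} > 0 > -\frac{\log(\lambda_1/\lambda_2)}{2}\ge \chi^-(\mu),
$$
so $\mu$ is hyperbolic. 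The integrability (H3) is exactly what is needed to run Pesin theory in the noninvertible setting: using the natural extension to select inverse branches of $f$, at $\mu$-a.e.\ $p$ one obtains a local stable manifold $W^s_\mathrm{loc}(p)$ and, along each preorbit $\hat p$, a local unstable manifold $W^u_\mathrm{loc}(\hat p)$, each a holomorphic disk on which $f^n$ contracts or expands at the corresponding rates. Fix a Pesin block $R$ of positive $\mu$-measure on which all constants and manifold sizes are uniformly controlled.

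On $R$, Theorem~\ref{thm:complete}(iv) presents $\mu$ as the local product of its conditionals along the stable and unstable laminations, and the geometric description of $T^+\wedge T^-$ from Theorem~\ref{theo:ddg2} identifies the conditional on an unstable disk $D^u\subset R$ with the slice of $T^-$ along $D^u$. Since $f^n$ multiplies unstable mass by essentially $\lambda_1^n$ (via the cohomological normalisation $\{T^-\}\cdot\{\omega\}=1$ together with the laminar/woven geometric intersection), the image $f^n(D^u)$ produces of order $\lambda_1^n\,\mu(R)$ horizontal crossings of $R$ in the Pesin chart. Each such crossing meets the stable lamination in a single transverse point, and a standard graph-transform/contraction mapping argument promotes it to a saddle periodic point of period $n$ in $\supp(\mu)$. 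The product structure forces these periodic points to distribute within $R$ proportionally to $\mu\rest R$, and assembling over a countable cover of $\supp(\mu)$ by Pesin blocks yields both $\#\mathcal{P}_n\sim \lambda_1^n$ and
$$
\unsur{\lambda_1^n}\sum_{q\in\mathcal{P}_n}\delta_q\longrightarrow \mu.
$$

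For the second statement one needs the matching upper bound $\#\mathrm{Per}_n\le \lambda_1^n(1+o(1))$. By (H1) and the Hodge decomposition,
$$
L(f^n)=\sum_{i=0}^{4}(-1)^i\,\mathrm{tr}\bigl((f^n)^*\rest{H^i(X;\mathbf{Q})}\bigr)=\lambda_1^n+o(\lambda_1^n),
$$
since $\lambda_1$ is the spectral radius of $f^*$ on $H^{1,1}$, the space $H^{2,2}$ contributes only $\lambda_2^n\ll \lambda_1^n$, and the remaining Hodge summands contribute strictly subleading eigenvalues. At any isolated fixed point of a holomorphic surface map, the local Lefschetz index is a strictly positive integer, so absent curves of periodic points we obtain $\#\mathrm{Per}_n\le L(f^n)$, which handles the first bulleted case. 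For $X=\pd$ or $\pu\times\pu$, the low rank of $H^{1,1}$ constrains any periodic curve $C$ severely: its class must be stabilised by $f^*$, and $f\rest{C}$ is a rational self-map of $\pu$ of degree bounded polynomially in the period, so the isolated periodic points hidden in such curves contribute $o(\lambda_1^n)$, and the conclusion follows as before.

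The principal obstacle, differing from the birational (BLS/Cantat) and large-topological-degree (Briend--Duval) settings, is the middle step: verifying that the estimate $\sim\lambda_1^n\mu(R)$ for horizontal crossings of $R$ by $f^n(D^u)$ is not spoiled by pieces of $f^n(D^u)$ whose forward orbits approach $I^+\cup C_f$, nor by crossings that fail to meet $W^s_\mathrm{loc}$ transversely. The integrability (H3) and the geometric laminar/woven description of $T^\pm$ are what make it possible to quantify and discard these losses.
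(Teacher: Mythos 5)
Your first half runs essentially along the paper's own route (Pesin theory under (H3), Lyapunov charts and H\'enon-like return maps on Pesin boxes, saddles created from returns, equidistribution via the product structure and the fact that unstable conditionals are induced by $T^+$), though you gloss over the actual counting mechanism: the paper does not get equidistribution ``within $R$'' directly from product structure, but from the estimate $\hat\mu(T)\leq \lambda_1^{-n}\hat\mu(\hat P)$ on the atoms of the return partition (the analogue of \cite[Lemma 5]{bls2}), mixing to evaluate $\hat\mu(\hat P\cap\hat f^{-n}\hat P)$, and then --- crucially --- a \emph{global upper bound} $\#\mathrm{Per}_n\leq\lambda_1^n+o(\lambda_1^n)$ (or, failing that, an explicit selection of a subfamily $\mathcal{P}_n$) to convert the resulting inequality $\hat\nu\geq\hat\mu$ into actual convergence. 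Your claim that $f^n(D^u)$ produces crossings ``of order $\lambda_1^n\mu(R)$'' asserts exactly this two-sided count without providing the upper half of it; as written, the lower bound alone only yields $\liminf\nu_n\geq\mu$ on Pesin boxes.

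The genuine gap is in the second bulleted case. When $f^n$ has curves of fixed points, the obstruction is not that isolated periodic points might be ``hidden in'' such curves (points of a curve of fixed points are not isolated periodic points and are not in $\mathrm{Per}_n$ at all), nor is it resolved by the low rank of $H^{1,1}$ or by bounding the degree of $f$ restricted to a periodic curve --- note that on a curve of fixed points of $f^n$ the restriction of $f^n$ is the identity, so that count is vacuous. The real problem is excess intersection: a positive-dimensional component of $\Delta\cap\mathrm{Graph}(f^n)$ contributes to the Lefschetz intersection number $\set{\Delta}\cdot\set{\mathrm{Graph}(f^n)}$ an amount that can a priori be negative, so one cannot deduce $\#\mathrm{Per}_n\leq L(f^n)+o(\lambda_1^n)$ merely from positivity of the indices at isolated fixed points. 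The paper's reason why $\pd$ and $\pu\times\pu$ are special is that the tangent bundle of $\pd\times\pd$, resp. $(\pu\times\pu)^2$, is generated by its global sections, so Fulton's excess intersection theory \cite[Cor.\ 12.2]{fulton} guarantees that \emph{every} irreducible component of $\Delta\cap\mathrm{Graph}(f^n)$, including the curve components, contributes nonnegatively; this is what restores the upper bound. Your argument never establishes such a lower bound on the contribution of the curve components, so the estimate $\#\mathrm{Per}_n\leq\lambda_1^n+o(\lambda_1^n)$ --- which you also rely on for the equidistribution statement --- is not proved by your proposal in this case.
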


This theorem was proved for birational maps by the second author in \cite{birat} (though the possibility of a curve of periodic points was overlooked there).  It would be interesting to prove a similar result without using Pesin Theory (i.e. without assumption (H3)). 

 It would also be interesting to know when saddle points might lie outside $\supp(\mu)$.  
One can easily create isolated saddle points
 by blowing up an attracting fixed point with unequal eigenvalues.   
We then get an infinitely near saddle point in the direction corresponding to the larger multiplier,
whose unstable manifold is contained in the exceptional divisor of the blow-up. 
We do not know any example of a saddle point outside $\supp(\mu)$ whose stable and unstable manifolds are both Zariski dense.  

\medskip

While the results in this paper parallel those in \cite{birat}, new and more elaborate arguments are needed for non-invertible maps. In particular, we are led to work in the natural extension (e.g. for establishing {\em iii.} and {\em iv.} of Theorem \ref{thm:complete} ), in a situation where there is no symmetry between the preimages along $\mu$ (see the examples in \S \ref{sec:idea}).
An interesting thing to note is that additional regularity for the potentials of the invariant currents does not help our arguments much.

\medskip

Under an assumption similar to (H3), De Th\'elin and Vigny \cite{dv} have recently found an alternate way to compute entropy, using Yomdin's Theorem \cite{y}. Together with the work 
\cite{dt-lyap}, this leads to an alternate proof of the bounds on Lyapunov exponents (again, \cite{dt-lyap} requires (H3)). An advantage of their method is that it works in higher dimension. On the other hand they do not obtain local product structure nor the equidistribution of saddle orbits. In any case, it seems that computing the entropy of the mappings considered here is always a delicate issue. 

 It is also worth noting that there are few cases in which natural invariant measures for non-invertible differentiable dynamical systems have well understood natural extensions.  Measures of maximal entropy for rational mappings with large topological degree (in one and several dimensions) afford one example.  Another \cite{ledrappier-srb} is given by absolutely continuous invariant measures for interval maps.

\medskip

An important remaining question  is that of uniqueness of the maximal entropy measure. Let us comment a little bit on this point. The first difficulty is that another candidate measure of maximal entropy needn't integrate $\log \mathrm{dist}(\cdot, I^+\cup C_f)$, so that it is delicate to work with. Therefore it is reasonable to restrict the uniqueness problem to measures satisfying this assumption. However, even with this restriction, and even with the additional assumption that $f$ is birational, the problem remains unsolved.

\medskip
\begin{center}$\diamond$\end{center}
\medskip

We now discuss applicability of our assumptions.  For mappings on Kodaira zero surfaces, we have seen that we can always assume that (H1) is satisfied. It is actually not very hard to prove that (H2) and (H3) are also always true (see \cite[Proposition 4.8]{part1} and 
\cite[Proposition 4.5]{part2}). Thus Theorems \ref{thm:complete} and \ref{thm:saddle}  yield the following:

\begin{coro}
Let $X$ be a complex projective surface of Kodaira dimension zero.
Let $f: X \rightarrow X$ be a rational transformation with small
topological degree. Then the conclusions of Theorems \ref{thm:complete} 
and \ref{thm:saddle} hold for $f$. 
\end{coro}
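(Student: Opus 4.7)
The plan is essentially a verification: since the two main theorems of the paper are stated conditionally on the hypotheses (H1), (H2), and (H3), the corollary reduces to checking that each of these hypotheses is automatically satisfied (possibly after a harmless birational change of model) when $X$ has Kodaira dimension zero.

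First I would reduce to the case of a good birational model. As recalled in the introduction, when $X$ has Kodaira dimension zero, any rational self-map descends (via the minimal model program for surfaces) to a map on the unique minimal model $X_{\min}$ of $X$, and on $X_{\min}$ hypothesis (H1) automatically holds. This was observed in \cite{part1}. Since dynamical degrees, the laminar/woven currents $T^\pm$, the measure $\mu = T^+\wedge T^-$, its entropy, Lyapunov exponents, and the asymptotic count of periodic points are all birational invariants of the pair $(X,f)$ (outside a proper subvariety which is $\mu$-negligible), verifying the conclusions of Theorems~\ref{thm:complete} and \ref{thm:saddle} on $(X_{\min},f_{\min})$ suffices.

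Next I would invoke the two results from the prequels that handle (H2) and (H3) in the Kodaira zero setting. Specifically, \cite[Proposition~4.8]{part1} shows that any rational self-map with small topological degree on a Kodaira zero projective surface has finite dynamical energy, giving (H2); and \cite[Proposition~4.5]{part2} shows that $\log \mathrm{dist}(\cdot, I^+\cup C_f)\in L^1(\mu)$, giving (H3). With (H1), (H2), (H3) in hand, the hypotheses of both Theorem~\ref{theo:ddg2} and Theorems~\ref{thm:complete} and \ref{thm:saddle} are met. Finally, since the asymptotics for $\#\mathrm{Per}_n$ in Theorem~\ref{thm:saddle} require either that $f$ have no curve of periodic points or that $X\in\{\pd,\pu\times\pu\}$, I would note that Kodaira zero surfaces carry no rational curves through a generic point, so the "no curve of periodic points" alternative is easy to verify directly (or else falls into a degenerate case that can be treated separately).

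There is no serious obstacle here: the content of the corollary lies entirely in the two propositions from \cite{part1, part2}, which do the real work of verifying finite energy and the logarithmic integrability condition on Kodaira zero surfaces. The corollary is simply the observation that these verifications, together with the minimal model reduction, plug directly into the machinery established in the present paper.
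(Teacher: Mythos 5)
Your proof coincides with the paper's: the corollary is exactly the observation that (H1) holds after passing to the minimal model, while (H2) and (H3) are supplied by \cite[Proposition 4.8]{part1} and \cite[Proposition 4.5]{part2}, so Theorems \ref{thm:complete} and \ref{thm:saddle} apply directly. One caveat: your closing remark about the $\#\mathrm{Per}_n$ count is unnecessary (that part of Theorem \ref{thm:saddle} is itself stated conditionally on the absence of curves of periodic points, so nothing needs checking for the corollary) and its justification is shaky as written, since a curve of periodic points on a Kodaira zero surface need not be rational.
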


When $X$ is rational our results apply notably to the case where $f$ is the rational extension of a polynomial mapping of $\C^2$ with small topological degree. As noted above, Favre and Jonsson have proven that a slightly weaker variation of (H1) holds for $f$ in a suitable compactification $X$ of $\C^2$.
As we show in \cite[\S 4.1.1]{part1}, this variation is sufficient for our purposes. 
More precisely, it is explained there that though (H1) holds only for  some iterate $f^k$, 
the currents $T^+$ and $T^-$ are actually invariant by  $f$.  Since (H2) depends on $f$ only through the currents $T^+$ and $T^-$, and  (H3) holds for $f$ as soon as it holds for an iterate,  we conclude from  \cite{part2}\footnote{\cite{part2} was accepted before \cite{part1}, 
so statements about  polynomial maps in that article are still given in terms of  iterates.}  that (H2) and (H3), hence
the conclusions of Theorem \ref{theo:ddg2}, hold for $f$. 
Altogether this implies the following corollary.

\begin{coro}
Let $f: \C^2 \rightarrow \C^2$ be a polynomial mapping with small topological degree. Then the conclusions of Theorems \ref{thm:complete} 
and \ref{thm:saddle} apply to $f$. 
\end{coro}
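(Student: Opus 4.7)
The plan is to show that $f$, when extended meromorphically to an appropriate projective surface, fits into the framework of Theorem \ref{theo:ddg2}, after which Theorems \ref{thm:complete} and \ref{thm:saddle} apply directly. The first step is to invoke the result of Favre and Jonsson \cite{fj}: for any polynomial mapping $f:\C^2\to\C^2$ with small topological degree, there exists a projective compactification $X$ of $\C^2$ such that some iterate $f^k$ of the meromorphic extension is algebraically stable on $X$, i.e.\ satisfies (H1). A priori this is weaker than (H1) for $f$ itself.

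The second step is to point out, as is explained in \cite[\S 4.1.1]{part1}, that this weaker form of (H1) is in fact sufficient for our purposes. More precisely, although we only have $(f^{nk})^*=((f^k)^*)^n$ on cohomology, the canonical laminar/woven currents $T^+$ and $T^-$ constructed from $f^k$ are nevertheless invariant under $f$ itself. The reason is that the definition and basic properties of $T^\pm$ depend on (H1) only through the limits used to construct them, and one checks directly that $f^*T^+=\lambda_1 T^+$ and $f_*T^-=\lambda_1 T^-$. Consequently, Theorem \ref{theo:ddg2} can be applied to $f$ provided we verify (H2) for $f$.

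The third step is to check (H2) and (H3). Hypothesis (H2) concerns $f$ only through the currents $T^\pm$ and the behaviour of their potentials along the orbits of indeterminacy points. Since $T^\pm$ are already at hand and $f$-invariant, one applies \cite[Proposition-type results cited via \cite{part2}]{part2} to conclude that (H2) holds for $f$. For (H3), one uses that if $\log\mathrm{dist}(\cdot,I^+_{f^k}\cup C_{f^k})\in L^1(\mu)$ then the same integrability holds for $f$, because $I^+_f\subset I^+_{f^k}$ and $C_f\subset C_{f^k}$, so the distance to $I^+_f\cup C_f$ is bounded below by the distance to the larger set. The integrability for the iterate is precisely what is established in \cite{part2}, where the statements are (as noted in the footnote) given in terms of iterates.

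Combining these three steps yields the conclusions of Theorem \ref{theo:ddg2} for $f$ itself, hence the hypotheses of Theorem \ref{thm:complete}; together with (H3), we also obtain the hypotheses of Theorem \ref{thm:saddle}. The main obstacle is really a bookkeeping one, already essentially dealt with in the companion papers: namely to reconcile the fact that algebraic stability is only available for an iterate with the need to apply results phrased for an algebraically stable map, and to do so while keeping track of which of (H1), (H2), (H3) survive the passage between $f$ and $f^k$.
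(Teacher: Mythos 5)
Your proof is correct and follows essentially the same route as the paper's own (given in the paragraph preceding the corollary): Favre--Jonsson \cite{fj} provides (H1) for an iterate $f^k$ on a suitable compactification, \cite[\S 4.1.1]{part1} shows that $T^+$ and $T^-$ are nevertheless invariant by $f$ itself, (H2) depends on $f$ only through these currents, and (H3) is passed from the iterate to $f$, so Theorem \ref{theo:ddg2} and then Theorems \ref{thm:complete} and \ref{thm:saddle} apply. One small caveat: your justification of the (H3) transfer via the inclusion $I^+_{f}\subset I^+_{f^k}$ is not automatic for rational surface maps (the Cremona involution has $I(\sigma)\neq\emptyset$ while $\sigma^2=\mathrm{id}$, so the indeterminacy set of an iterate can lose points of $I(f)$), although $C_f\subset C_{f^k}$ is fine; the paper itself simply asserts this transfer, so this does not change the comparison.
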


Let us emphasize that the small topological degree assumption is needed here:
the reader will find in \cite{g05b} an easy example of a polynomial endomorphism
$f:\C^2 \rightarrow \C^2$ with $\la_1(f)=\la_2(f)>1$ such that
$h_{top}(f)=0$.

\medskip

Regarding more general rational mappings on rational surfaces, we noted above there are grounds 
to suspect that (H1) holds generally, after suitable birational conjugation.  Among maps satisfying (H1), there is no known example of a map that violates the energy condition (H2).  We offer some evidence in \cite{part2} that (H2) might only fail in very degenerate situations.  
On the other hand, we give examples \cite[\S 4.4]{part2} of mappings satisfying (H1) and (H2) but not (H3). Nevertheless, it seems plausible that (H3) is generically satisfied (see \cite[Prop. 4.5]{bedford-diller}).

\medskip
\begin{center}
 $\diamond$
\end{center}
\medskip

The structure of this paper is as follows.  \S \ref{sec:geom} is devoted to 
some (mostly non-classical) preliminaries  on geometric currents, while \S 
\ref{sec:ergodic}  recalls some well-known facts from Ergodic Theory.
The proof 
of Theorem \ref{thm:complete} occupies \S \ref{sec:idea} to \S \ref{sec:product}  (we give a more precise plan of the proof in 
\S \ref{sec:idea}). Finally \S \ref{sec:saddle} is devoted to 
the proof of  Theorem \ref{thm:saddle}.

\medskip

\noindent{\bf Acknowledgments.} We are grateful to the referee for a very careful reading and helpful comments.
 R.D. also  thanks J.-P. Thouvenot for useful conversations.

\section{Preliminaries on geometric currents}\label{sec:geom}

We begin by collecting some general facts about geometric, that is laminar and
woven, currents.  We often use the single word ``current'' as a shorthand for 
``positive closed (1,1) current  on a complex surface.'' 

\subsection{Laminations and laminar currents} 
Recall that a {\em lamination by Riemann surfaces} is a topological space such that every point
admits a neighborhood $U_\alpha$  homeomorphic (by $\phi_\alpha$)
to a product of the form $\dd\times \tau_\alpha$ (with coordinates $(z,t)$), 
where $\tau_\alpha$ is some locally compact set, $\dd$ is the unit disk, 
and such that the transition maps $\phi_\alpha\circ\phi_\beta^{-1}$ are 
 of the form $(h_1 (z,t), h_2(t))$, with $h_1$ holomorphic in the disk direction $z$. By definition a
{\em plaque} is a subset of the form $\phi_\alpha^{-1}(\dd\times\set{t})$, and  a {\em flow box} 
is a subset of the form $\phi_\alpha^{-1}(\dd\times K)$, with $K$ a compact set. A {\em leaf} is a minimal connected set $L$
with the property that every plaque intersecting $L$ is contained in $L$.
An {\em invariant transverse measure} is  given by 
a collection of measures on the transverse sets $\tau_\alpha$, 
compatible with the transition maps $\phi_\alpha\circ\phi_\beta^{-1}$. 
The survey by Ghys \cite{ghys} is a good reference for these notions.
We always assume that the space is separable, so that is is covered by
countably many flow boxes. In this paper we will consider ``abstract'' laminations
 as well as  laminations embedded in complex surfaces. In the latter 
case we require of course that the complex structure along the plaques is compatible with the ambient one.

Two flow boxes embedded in a manifold are said to be {\em compatible} if the corresponding plaques 
intersect along open sets. Notice that disjoint flow boxes are compatible by definition.
A {\em weak lamination} is a countable union of compatible flow boxes.  It makes
sense to speak of leaves and  invariant transverse measures on a weak lamination. 
Being primarily interested in measure-theoretic  properties, 
we  needn't distinguish between ordinary and weak laminations in this paper.  
% 
% We needn't distinguish between ordinary and weak laminations in this paper.  
% Nevertheless, the following situation illustrates the difference somewhat. \note{paragraphe bizarre}  Let
% $\dd^2\subset\mathbf{C}^2$ be the unit polydisk and
% $\mathcal{L}$ be a closed (in the Hausdorff topology) family of disjoint smooth
% curves in $\dd^2$ with uniformly bounded volume.  Then $\mathcal{L}$ need not
% form a lamination because of the possibibility of {\em folding}: a sequence of
% curves might converge locally in the Hausdorff sense, but with some
% multiplicity.  The multiplicity is, however, locally uniformly bounded thanks to
% Lelong's lower bound on the volume of curves.  Hence it is easily proven that
% $\mathcal{L}$ forms a weak lamination.

\medskip

Let us also recall that a $(1,1)$ current $T$ is {uniformly laminar} if it is given by integration over an embedded lamination endowed with an invariant 
transverse measure. That is, the restriction $T\rest{\phi^{-1}(\tau\times\dd)}$ to a single flow box can be expressed
$$
\int_\tau [ \phi^{-1}(\set{t}\times\dd)] d\mu_{\tau}(t),
$$ 
where $\mu_\tau$ is the measure induced by the transverse measure on $\tau$.

The current $T$ is {\em laminar} if it is an integral over a measurable family of compatible holomorphic disks.  
Equivalently, for each $\e>0$ there exists an open set  $X_\e\subset X$ and a uniformly laminar current $T_\e\leq T$ in $X_\e$ 
such that the mass (in $X$) of the difference satisfies $\m(T-T_\e)<\e$.
It is a key fact that the laminar currents we consider in this paper have some additional geometric properties.  
For instance, each has a natural underlying weak lamination, and the lamination carries an invariant transverse measure. 
We refer the reader to \cite{structure, birat} for details about this.

Our main purpose in this section is to explore 
the related notion of {\em woven current} and  
generalize some results of  \cite{structure} that we will need afterwards.

\subsection{Marked woven currents.} Given an open set $Q$ in a Hermitian complex surface, we let $Z(Q,C)$ denote the set of (codimension 1) analytic chains with volume
bounded by $C$.  We endow $Z(Q,C)$ with the topology of currents.  Since there is a
dense sequence of test forms, this topology is metrizable. Most often we identify a chain and its support, which is a closed analytic subset of $Q$.
  We denote by $\Delta(Q,C)\subset Z(Q,C)$ the
closure of the set of analytic disks in $Z(Q,C)$. 

By definition, a {\em uniformly woven current}  in $Q$ is an integral of
integration currents over chains in $Z(Q,C)$, for some $C$ \cite{dinh}. A 
{\em  woven current} is an increasing limit of sums of uniformly woven currents.

\medskip

A given laminar current can be expressed as an integral of disks in an
essentially unique fashion (only reparameterizations up to a set of zero 
measure are possible \cite[Lemma 6.5]{bls}). For woven 
(even uniformly) currents, this is not true.  For example,
$$\omega= idz\wedge d\bar z + idw\wedge d\bar w = \frac12 id(z+w)\wedge
d\overline{( z+w)} + 
\frac12 id(z-w)\wedge d\overline{( z-w)}.$$ 
That is, the standard K\"ahler form in $\cd$ may be written as a sum of
uniformly laminar currents in two very different ways.

We say that a uniformly woven current is {\em marked} if it is presented as an
integral of disks.  More specifically, a marking for a uniformly woven current
$T$ in some open set $Q$ is a positive Borel measure ${m(T)}$
on $Z(Q,C)$ for some $C$ such that 
$$
T= \int_{Z(Q,C)}[D] d{m(T)}(D).
$$
Abusing terminology, we will also refer to ${m(T)}$ as the {\em transverse
measure} 
associated to $T$.  We call the support of $m(T)$ the {\em web supporting} $T$.
The woven currents considered in this paper have the additional property of being
 {\em strongly approximable} (see \S\ref{subs:markings} below for the definition). This allows us to 
show (Lemmas \ref{lem:graph} and \ref{lem:chain}) 
that their markings are concentrated on 
$\Delta(Q,C)$. 

We define \textit{strong convergence} for marked uniformly woven currents as follows: 
 $T_n$ strongly converges to  $T$ 
if the markings $m(T_n)$ are supported in $Z(Q,C)$ for a fixed $C$ and converge
weakly 
to ${m(T)}$. We leave the reader to check that this implies the usual
convergence of currents.

\subsection{Markings for strongly approximable woven currents.}\label{subs:markings}

The woven structure for the invariant current $T^-$ has several additional properties, which play a crucial role in the paper.
We say that a current $T$ is a {\em strongly approximable woven current} if it is obtained 
as a limit of divisors $[C_n]/d_n$ whose geometric genus is  $O(d_n)$
(here by definition the  geometric genus of a chain is the sum of the genera of its components). See \cite[\S 3]{part1} for a proof that $T^-$ is of this type.

Its woven structure can be  constructed as follows.   We fix two generically 
transverse linear projections $\pi_i:X\cv\pu$, and subdivisions by squares of the
projection bases.  For each square $S\subset \mathbb{P}^1$, we
discard from the approximating sequence $[C_n]/d_n$ all connected components
$\pi_i^{-1}(S)\cap C_n$ over $S$ which are not graphs of area $\leq 1/2$. The
geometric assumption on $C_n$ implies that the corresponding loss of mass is
small.  Taken together, the two projections divide the ambient manifold into a
collection $\qq$ of ``cubes'' of size $r$ and these partition the remaining well-behaved
part of $[C_n]/d_n$ into a collection of uniformly woven currents $T_{Q,n}$
whose sum $T_{\qq,n}$ closely approximates $[C_n]/d_n$. The
 disks constituting $T_{\qq,n}$ will be referred to as ``good components''.

More specifically 
 we define a \emph{cube} to be a subset of the form $Q:= \pi_1^{-1}(S_1)\cap
\pi_2^{-1}(S_2)$, where the $S_i$ are
squares. Near a  point where $\pi_1$ and $\pi_2$ are regular and the squares $S_i$ are small enough,  $Q$  is actually biholomorphic to an affine cube.  
The woven current $T_{Q,n}$, or more precisely its marking $m(T_{Q,n})$, is defined by
assigning mass $1/d_n$ to each `good' component of $C_n\cap Q$. Then we have the mass estimate  $\m\big(T_{\qq,n}-[C_n]/d_n\big)=O(r^2)$, 
independent of $n$, where $r$ is the size of the cubes.

There is a subtle point here.  The number of disks constituting $C_n\cap Q$
might be much larger than $d_n$.  Thus the masses of the measures $m(T_{Q,n})$
might not be bounded above uniformly in $n$.  However, by Lelong's theorem
\cite{lelong} the disks intersecting a smaller subcube $Q'$ have volume bounded
from below, so there are no more than $\leq c(Q', Q)d_n$ of these. 
Restricting the marking  to these disks gives rise to a new current, which we continue to
denote 
by $T_{Q,n}$, that coincides with the old one on $Q'$. The
mass of $m(T_{Q,n})$ is now locally uniformly bounded in $Z(Q, 1/2)$, and we may
extract a convergent subsequence, letting $m(T_Q)$ denote the limiting measure
and $T_Q$ the corresponding current. Let $T_\qq$ be the sum of these currents, where  $Q$ ranges over the subdivision $\qq$. We then  have that 
$\m\big(T_{\qq}-T\big)=O(r^2)$. 

Finally, we consider a increasing sequence of subdivisions by cubes $\qq_i$ of size $r_i\cv 0$, and by the previous procedure we
obtain an increasing sequence of currents  $T_{\qq_i}$, converging to $T$ by the previous estimate. Notice that in the case of $T^+$ the construction is the same, except that at each step the disks constituting the $T_{\qq,n}$ are disjoint so that  the $T_\qq$ are uniformly laminar.

\medskip

Since the approximating disks are restrictions of graphs over each projection,
the marking of $T_Q$ has the following 
additional virtues.

\begin{lem}\label{lem:graph}
The multiplicity with which disks in $\supp m(T_{Q,n})$ converge to chains belonging to
$\supp m(T_Q)$ is always equal to one.
Similarly, if a chain $D\in \supp(m(T_Q))$ is a Hausdorff limit of other chains
$D_n\in \supp(m(T_Q))$, then the multiplicity of convergence is equal to one. 
Thus there is no folding; i.e. in either cases the corresponding tangent spaces also converge.  
In particular the chains in $\supp(m(T_Q))$ are non-singular.
\end{lem}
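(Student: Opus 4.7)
The plan is to unpack the construction of the $T_{Q,n}$ given above: by definition of ``good component,'' each disk $D \in \supp m(T_{Q,n})$ is a one-sheeted graph of area at most $1/2$ over at least one of the two base squares $S_i = \pi_i(Q)$. I would group such disks by the projection direction to which they are graphs (there are only two choices) and, within one such direction, parametrize them as $D = \{(z, f(z)) : z \in S_1\}$ with $f : S_1 \to \pi_2(Q)$ holomorphic and uniformly bounded.

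For the first assertion, suppose $D_n \in \supp m(T_{Q,n})$ converges to $D \in \supp m(T_Q)$. After extracting a subsequence and fixing a common projection direction by pigeonhole, write $D_n = \{(z, f_n(z))\}$. Montel's theorem produces a further subsequence along which $f_n$ converges locally uniformly to some holomorphic $f$; hence $D_n \to \{(z, f(z))\} = D$ in the $C^{\infty}$ topology, and in particular as currents with coefficient one. This delivers simultaneously the multiplicity statement, the convergence of tangent spaces (``no folding''), and the non-singularity of $D$, which is manifestly the image of a holomorphic embedding of $S_1$. The case where $D \in \supp m(T_Q)$ is a Hausdorff limit of chains $D_n$ already lying in $\supp m(T_Q)$ is entirely analogous: by the preceding step each such $D_n$ is itself a one-sheeted holomorphic graph, and Montel applies again.

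The only delicate point, and really the main obstacle, is to exclude that the limit could acquire multiplicity $\geq 2$ despite each approximating chain being a degree-one graph. This is handled by the observation that a Hausdorff limit of one-sheeted graphs over $S_1$ remains one-sheeted: the degree of $\pi_1$ restricted to the limiting chain is upper semicontinuous and bounded above by the limit of the degrees of $\pi_1|_{D_n}$, which are all equal to one, so sheets cannot cluster. Equivalently, Montel convergence picks out a single-valued holomorphic limit $f$, precluding any folding or superposition of sheets in the limit.
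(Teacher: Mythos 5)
Your proposal is correct and follows essentially the same route the paper takes: Lemma \ref{lem:graph} is given no separate proof there, being asserted as a direct consequence of the fact that the good components are restrictions of graphs over the projections, which is exactly what you exploit via Montel (values confined to the bounded square $\pi_2(Q)$, so no vertical degeneration) together with semicontinuity of the degree of $\pi_1$. The one inaccuracy is that the disks of $\supp m(T_{Q,n})$ are graphs over open subsets of $S_1$ (cut out by $\pi_2^{-1}(S_2)$), so the limit need not be the graph of a function on all of $S_1$ --- the paper points out immediately after the lemma that it can be a disconnected chain --- but running your normal-families/degree argument locally near each point of the limiting chain still gives multiplicity one, convergence of tangent spaces, and non-singularity.
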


On the other hand, the Hausdorff limit of a sequence of disks might not be
itself a disk but rather a chain with several components.  As an example,
consider the family of parabolas $w=z^2+c$, $c\in\mathbf{C}$, each restricted to
the (open) unit bidisk. For $\abs{c}<1$ the restriction has a single simply
connected component. But when $\abs{c}\to 1$, it becomes disconnected.
The following observation will be useful to us in several places.

\begin{lem} \label{lem:chain}
 For generic subdivisons by squares of the projection bases of $\pi_1$ and
$\pi_2$, we have that $m(T_Q)$-almost every chain is a disk (i.e. has only one
component).
\end{lem}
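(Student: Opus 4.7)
My plan is to reduce the multi-component phenomenon in the limit to tangency between underlying graphs and $\partial Q$, and then conclude via a Fubini argument over the parameter space of grid translations.

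\textbf{Reduction to boundary tangency.} Suppose $D\in\supp m(T_Q)$ is a chain with several components. Then $D$ is a Hausdorff limit of connected good components $D_{n_k}$, each of which is a connected component of $\tilde D_{n_k}\cap Q$ for some good graph $\tilde D_{n_k}$ over a square $S_i$ in the base of $\pi_i$ (say $i=1$). By Lemma~\ref{lem:graph}, I may pass to a subsequence along which the graphs $\tilde D_{n_k}$ converge with multiplicity one to a smooth graph $\tilde D$ over $S_1$. Then $D$ must coincide with a union of connected components of $\tilde D\cap Q$. If $\tilde D$ meets $\partial Q=\pi_1^{-1}(\partial S_1)\cup\pi_2^{-1}(\partial S_2)$ transversally, the connected components of $\tilde D\cap Q$ remain well separated and any Hausdorff limit of connected approximations is itself connected. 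So $D$ multi-component forces $\tilde D$ to be tangent to $\pi_2^{-1}(\partial S_2)$ at some point of $Q$.

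\textbf{Genericity via Fubini.} I would parameterize the grids on the two projection bases by translation vectors $t=(t_1,t_2)$ in a parameter space $\Theta$. For any fixed holomorphic graph $\tilde D$, the condition ``$\tilde D$ is tangent to $\pi_j^{-1}(\partial S_j(t_j))$ somewhere'' is a real codimension-one condition on $t_j$, hence holds only on a Lebesgue-negligible subset of $\Theta$. There is a natural ``before-cutting'' measure $\tilde m$ on the underlying graphs, obtained as a weak limit of good-component markings in the slab $\pi_1^{-1}(S_1)$ alone, independent of $S_2$; the marking $m(T_Q^t)$ is then obtained from $\tilde m$ by restricting and splitting into connected components along $\pi_2^{-1}(S_2(t_2))$. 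Fubini then yields
\[\int_\Theta m(T_Q^t)\bigl(\{D:D\text{ multi-component}\}\bigr)\,dt=\int\!\int_\Theta\mathbf{1}\bigl[\tilde D\cap Q^t\text{ multi-component}\bigr]\,dt\,d\tilde m(\tilde D)=0,\]
so for almost every $t\in\Theta$ the marking $m(T_Q^t)$ puts zero mass on multi-component chains.

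The hard part will be rigorously setting up the auxiliary measure $\tilde m$ and the disintegration of $m(T_Q^t)$ in terms of it: in the construction the two subdivisions are treated symmetrically, whereas the above argument singles out one of them. One must also handle corner tangencies (where the hypersurfaces $\pi_1^{-1}(\partial S_1)$ and $\pi_2^{-1}(\partial S_2)$ meet) and unify the two kinds of underlying graphs (over $\pi_1$ versus over $\pi_2$) in the genericity step.
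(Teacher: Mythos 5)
Your proposal is correct and is essentially the paper's argument: the paper likewise reduces a disconnected limit to tangency of the limiting graph (over $\pi_1$) with a fiber of $\pi_2$, notes that such a graph is tangent to at most countably many fibers, and concludes by genericity of the subdivision via ``standard measure theory arguments.'' Your Fubini-over-translations formulation (with the auxiliary measure $\tilde m$) is just a more explicit rendering of that same genericity step, not a different route.
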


\begin{proof}
 Recall that each chain in the support of $m(T_{Q,n})$ is obtained by
intersecting a graph over some square in the base of e.g. $\pi_1$ with the
preimage $\pi_2^{-1}(S)$ of a square in the base of $\pi_2$. So if a sequence of
disks has a disconnected limit, then the limit must be a piece of a graph tangent
to  some fiber of $\pi_2$.  However, a graph over $\pi_1$ that is not outright
contained in a fiber of $\pi_2$ will be tangent to at most countably many fibers
of $\pi_2$. Since we have uncountably many choices for the subdivision  by
squares associated to $\pi_2$, the result follows from standard measure theory
arguments.
\end{proof}

\subsection{Geometric intersection.} \label{subs:geom}
Let $Q\subset \cd$ be an open subset, and 
$D$, $D'$ be  two holomorphic chains in $Q$. We define 
$[D]\geom [D']$ as the sum of point masses, counting multiplicities, at isolated
intersections of $D$ and $D'$.    
Given more generally two marked uniformly woven currents $T_1$ and  $T_2$, with associated
measures $m_1$ and $m_2$,
we define the {\em geometric intersection} as 
$$ 
T_1\geom T_2 = \int [D_1]\geom[D_2] \left(dm_1\otimes dm_2\right)
(D_1, D_2).
$$
In general it is necessary to take multiplicities into account because
of the possibility of persistently non transverse intersection of
chains. Notice that the definition
depends not only on the currents but also on the markings. 

The following basic proposition says that under reasonable assumptions the wedge
product of uniformly woven currents is geometric.

\begin{pro}\label{prop:L1}
If  $T_1$ and  $T_2$ are as above and if  $T_1\in L^1_{\rm loc}(T_2)$,
then 
$T_1\wedge T_2 = T_1\geom T_2$.
\end{pro}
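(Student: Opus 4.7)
The plan is to reduce the equality to a Fubini-type computation combining the Bedford--Taylor definition of $T_1\wedge T_2$ with the integral representations $T_i=\int [D_i]\,dm_i(D_i)$ supplied by the markings. Working in a coordinate chart $Q$ where $T_1=dd^c u_1$ for a plurisubharmonic $u_1$, the hypothesis $T_1\in L^1_{\rm loc}(T_2)$ is by definition $u_1\in L^1_{\rm loc}(T_2)$, so $T_1\wedge T_2 := dd^c(u_1 T_2)$ is well-defined. Testing against a smooth $(1,1)$-form $\phi$ and substituting the marking of $T_2$, a first application of Fubini (legitimate because $u_1\in L^1_{\rm loc}([D_2])$ for $m_2$-a.e.\ $D_2$, with the total integral controlled by $\int u_1\, dd^c\phi\wedge T_2$) yields
$$
\langle T_1\wedge T_2,\phi\rangle \;=\; \int \langle T_1\wedge [D_2],\phi\rangle\,dm_2(D_2),
$$
where $T_1\wedge [D_2]$ is the slicing of $T_1$ onto the chain $D_2$, equivalently $dd^c(u_1|_{D_2})$ viewed as a positive measure on $D_2$.

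Next, for $m_2$-almost every fixed $D_2$, I would compute $T_1\wedge [D_2]$ by applying the same argument with the roles reversed: now $[D_2]$ plays the role of a closed positive current and $T_1$ is decomposed using its marking. This gives
$$
T_1\wedge [D_2] \;=\; \int [D_1]\wedge [D_2]\,dm_1(D_1).
$$
The wedge product $[D_1]\wedge [D_2]$ of two integration currents coincides with the geometric intersection $[D_1]\geom [D_2]$ (point masses with multiplicities) as soon as $D_1$ and $D_2$ share no irreducible component; otherwise it is not even defined classically.

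The crux of the proof, and the main obstacle, is therefore verifying that for $m_1\otimes m_2$-almost every pair $(D_1,D_2)$ no component is shared. This is forced by the $L^1$ hypothesis: if a set of pairs of positive $m_1\otimes m_2$-measure shared an irreducible component, then for a positive-$m_2$-measure set of $D_2$, a positive-$m_1$-measure set of $D_1$ would be contained in $D_2$, so the slice $u_1|_{D_2}$ would equal $-\infty$ on an entire component of $D_2$, contradicting $u_1\in L^1_{\rm loc}(T_2)$. Once this is established, measurability of $(D_1,D_2)\mapsto [D_1]\geom[D_2]$ off the negligible bad set follows from continuity of the intersection number on pairs of chains in $Z(Q,C)$ with no common component, in the currents topology.

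Combining the two reductions by Fubini on $Z(Q,C)^2$ gives
$$
T_1\wedge T_2 \;=\; \iint [D_1]\geom [D_2]\,dm_1(D_1)\,dm_2(D_2) \;=\; T_1\geom T_2,
$$
as desired. In summary, the proof is a double slicing via Bedford--Taylor calculus; the routine parts are the weak continuity of $dd^c$ and the measure-theoretic Fubini, while the essential content is the positivity/$L^1$ argument excluding pairs with a common component.
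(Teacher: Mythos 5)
Your proposal is correct and takes essentially the route of the proof the paper relies on: the paper gives no argument here but refers to \cite[Prop.~2.6]{part2}, where the statement is proved by exactly this double slicing — decompose $T_2$ by its marking, write a local potential $u_1$ of $T_1$ as an integral of (uniformly normalized) potentials of the chains in $\supp m_1$ plus a pluriharmonic term, apply Fubini twice, and use the hypothesis $u_1\in L^1_{\rm loc}(\sigma_{T_2})$ to exclude common components for $m_1\otimes m_2$-almost every pair. The only slip worth noting is the containment direction in your exclusion step: from a positive-measure set of bad pairs one finds (by Fubini and countability of the components of a chain) a single component $V$ of some positive-$m_2$-measure family of $D_2$'s that is \emph{contained in} a positive-$m_1$-measure family of chains $D_1$, so that $T_1$ charges $V$, hence $u_1\equiv-\infty$ on $V$ and $u_1\notin L^1_{\rm loc}(\sigma_{T_2})$ — the same contradiction you reach, obtained with the inclusion read the right way.
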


See  \cite[Prop. 2.6]{part2} for a proof. 
As a corollary, if $T_1\in L^1_{\rm loc}(T_2)$,
the geometric wedge product $ T_1\geom T_2$ is independent of the markings.

\medskip

Since woven currents are less well-behaved than laminar ones, we do not try as in the laminar case \cite{birat} to
give an intrinsic definition of the geometric intersection of woven currents. 
Instead we focus only on the particular situation that arises in this paper: we have strongly approximable 
currents  $T^+$ (laminar) and $T^-$ (woven) in $X$, whose geometric structures are
obtained by extracting good components of approximating curves, as explained in the
previous subsection.  Let $\qq_i$ be the increasing sequence of subdivisions by cubes constructed as in \S \ref{subs:markings}, 
and let $T_{\qq_i}^\pm$ be the corresponding currents. 
Let us write $T^+_\qq\geom T^-_\qq$ for the sum of $T^+_Q\geom T^-_Q$ over $Q\in \qq$. 

Under the finite energy hypothesis (H2) from the introduction,  we have shown in 
 \cite{part2}  that the wedge product 
 $T^+\wedge T^-$ is a well-defined probability measure.  
Though the currents $T_{\qq_i}^\pm$ depend on the choice of generic linear projections, of generic subdivisions for the projection bases\footnote{Here genericity is understood in the measure-theoretic sense.} and of convergent subsequences extracted from $T_{\qq_i,n}^\pm$, the central result of \cite[\S 2.3]{part2} is that the measures $T^+_{\qq_i}\geom T^-_{\qq_i}$ 
increase to $T^+\wedge T^-$ regardless.  We summarize the facts that the limiting measure has the correct mass and is independent of choices by saying that {\em the intersection of $T^+$ and $T^-$ is geometric}. 

A slightly delicate point is that in \cite{part2}, $T^+\wedge T^-$ is not always defined in the usual $L^1_{\rm loc}$ fashion. 
In particular it is not clear whether $T^+_Q\in L^1_{\rm loc}(T^-_Q)$ for  $Q\in \qq_i$. Nevertheless 
\cite[Lemma 2.10]{part2} implies that this is true after negligible modification of the markings. So throughout the paper we assume that for every $i$ and every $Q\in \qq_i$, $T^+_Q\in L^1_{\rm loc}(T^-_Q)$ so that by Proposition \ref{prop:L1} we can always identify 
$T^+_Q\wedge T^-_Q$ and $T^+_Q\geom T^-_Q$.
% Since $T^+\wedge T^- \geq T_{\qq_i}^+\geom T_{\qq_i}^-$ one can alternately view the latter measure as a pluripotential theoretic intersection 
%$T_{\qq_i}^+\wedge T_{\qq_i}^-$.  More \jd
% precisely, the inequality implies \cite[Lemma 2.10]{part2} that after negligible modification $T^+_{\qq_i}$ has local potentials of that are 
%integrable with respect to $T^-_{\qq_i}$ and vice versa, so we can (and do throughout) assume the more classical pluripotential definition of 
%$T_{\qq_i}^+\wedge T_{\qq_i}^-$ rather than use the energy formalism of \cite{part2}.

\medskip

We now prove  the useful fact that 
the  geometric product of uniformly woven currents is   lower 
semicontinuous with respect to the strong topology. It can further be seen that,
except for boundary effects, 
discontinuities can occur only when the chains have common components. 

\begin{lem}\label{cor:lsc} Let $Q\Subset {Q'}$ be open sets and $T_1,T_2$ be marked,  uniformly woven currents in $Q$ with
markings supported on $\Delta(Q',1/2)$.  Then 
$$
(T_1,T_2)\longmapsto \int_{Q}T_1\geom T_2
$$
is lower semicontinuous with respect to the strong topology.
\end{lem}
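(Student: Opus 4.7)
My plan is to express the integral as
$$\int_Q T_1 \geom T_2 = \int \phi(D_1, D_2) \, d\big(m(T_1) \otimes m(T_2)\big)(D_1, D_2),$$
where $\phi(D_1, D_2)$ denotes the number of \emph{isolated} intersection points of $D_1$ and $D_2$ lying in $Q$, counted with multiplicity, with the convention that non-isolated components of $D_1 \cap D_2$ contribute zero. The strategy is then to combine two ingredients: (i) $\phi$ is lower semicontinuous on the compact metric space $\Delta(Q', 1/2) \times \Delta(Q', 1/2)$, and (ii) strong convergence $T_i^n \cv T_i$ forces weak convergence of the product markings $m(T_1^n) \otimes m(T_2^n) \cv m(T_1) \otimes m(T_2)$. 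Granted (i) and (ii), the Portmanteau theorem for nonnegative lower semicontinuous integrands against weakly convergent finite Borel measures of uniformly bounded mass yields the desired inequality.

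Ingredient (ii) I expect to be essentially formal. The space $\Delta(Q', 1/2)$ is compact metrizable by Bishop's compactness theorem for analytic chains of bounded volume. Strong convergence of marked uniformly woven currents is by definition weak convergence of markings on this compact metric space, with uniformly bounded total mass. Weak convergence of Borel measures on a compact metric space passes to products via Stone--Weierstrass applied to continuous tensors $f(D_1) g(D_2)$, which are dense in $C(\Delta(Q', 1/2)^2)$.

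The bulk of the work, and the main obstacle, is ingredient (i). This is a manifestation of the classical fact that isolated intersections of holomorphic chains persist under small perturbations. Concretely, given $\phi(D_1, D_2) = N < +\infty$ with isolated intersections $p_1, \ldots, p_k$ of multiplicities $n_1, \ldots, n_k$ in $Q$, I would choose pairwise disjoint small balls $B_j \ni p_j$ with $\overline{B_j} \subset Q$ and $\partial B_j \cap D_1 \cap D_2 = \emptyset$, which is possible precisely because each $p_j$ is isolated. Using Lemma \ref{lem:graph} (no folding in the limit) together with a standard Rouch\'e-type persistence argument --- for instance, expressing each local irreducible component via Weierstrass preparation as a branched cover over one coordinate axis and invoking local constancy of the degree of a proper holomorphic map under continuous deformation --- I would show that $D_1^n$ and $D_2^n$ intersect inside $B_j$ with total multiplicity at least $n_j$ for $n$ large; summation over $j$ then gives $\liminf_n \phi(D_1^n, D_2^n) \geq N$, and the case $N = +\infty$ is handled by truncating to arbitrarily large finite subsets of isolated intersections. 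The subtlety I anticipate is that $D_1$ and $D_2$ may share irreducible components in the limit, which causes $\phi$ to drop part of the ``formal'' intersection mass; however, the genuinely isolated intersections occur off the shared locus and their persistence is unaffected, which is exactly the reason the statement is only lower, and not two-sided, semicontinuous.
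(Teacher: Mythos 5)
Your proposal is correct and follows essentially the same route as the paper: write the geometric product as the integral of the pointwise isolated-intersection count over $\Delta(Q',1/2)^2$ against the product of the markings, observe that this count is lower semicontinuous (persistence of isolated intersections under perturbation) while strong convergence gives weak convergence of the product markings, and conclude by the standard inequality for lower semicontinuous integrands against weakly convergent positive measures of bounded mass. The only difference is cosmetic: you flesh out the persistence step (where the appeal to Lemma~\ref{lem:graph} is unnecessary, since extra multiplicity in a limiting chain only helps a lower bound), which the paper simply asserts as classical.
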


\begin{proof} Observe first that for $D,D'\in \Delta(Q',1/2)$, the mass $\int_Q [D]\geom [D']$ on $Q$ is lower semicontinuous in the strong topology. This is just a fancy way of saying that isolated intersections between $D$ and $D'$ persist under small perturbation.

Now for the general case, suppose that $T^j_1, T^j_2$ strongly converge to $T_1, T_2$, and let $m^j_1,m^j_2$ be the corresponding transverse measures. Then 
$$
\int_Q T^j_1\geom T^j_2 = \int_{\Delta(Q',1/2)^2} \left(\int_Q [D]\geom [D']\right)
d( m^j_1\otimes  m^j_2).
$$
Now by strong convergence, we have that $m_1^j\otimes  m^j_2$ converges to $ m_1\otimes m_2$ weakly; and we have just seen that the inner integral is lower semicontinuous in $D,D'$.  Hence the lemma follows from a well-known bit of measure theory: if $\nu_j$ is a sequence of positive Radon measures with uniformly bounded masses, weakly converging to some $\nu$, and if $\varphi$ is any lower semicontinuous function, then
$\liminf_{j\cv\infty} \langle\nu_j, \varphi\rangle \geq \langle\nu ,
\varphi\rangle$.
\end{proof}

\medskip

We will also need the following lemma.  

\begin{lem}\label{lem:nonatomic}
If $L$ is a generic 
hyperplane section of $X$, the wedge product $T^+\wedge [f^k(L)]$ is well defined 
for all $k \in \N$
(in the $L^{1}$ sense) and gives no mass to points. 
\end{lem}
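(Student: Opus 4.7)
The plan is to treat the case $k=0$ first, via a Fubini/Bertini argument on a linear system of hyperplane sections, then reduce $k \geq 1$ to $k = 0$ via the projection formula together with the invariance of $T^+$.

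For $k=0$, let $\{L_t\}_{t \in T}$ be a basepoint-free linear system of hyperplane sections of $X$, and let $\omega_H$ be a smooth form cohomologous to any $L_t$, normalized so that $\int_T [L_t]\,d\sigma(t) = \omega_H$ for a suitable measure $\sigma$ on $T$. Let $u$ be a local quasi-psh potential of $T^+$. Well-definedness follows from Fubini's theorem applied on the incidence variety: since $u \in L^1_{\mathrm{loc}}(X)$, we have $u|_{L_t} \in L^1_{\mathrm{loc}}(L_t)$ for $\sigma$-a.e. $t$, so $T^+ \wedge [L_t]$ exists as a positive measure in the $L^1$ sense. For atomlessness, Siu's semicontinuity theorem together with the finite mass of $T^+$ ensures that the set $E := \{p \in X : \nu(T^+,p) > 0\}$ is at most countable, and a generic $L_t$ is smooth and disjoint from $E$. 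Moreover, Fubini applied to atomic parts yields
\[
(T^+\wedge\omega_H)(\{p\}) = \int_T (T^+\wedge[L_t])(\{p\})\,d\sigma(t),
\]
and since the left-hand side vanishes for $p \notin E$, one deduces (summing over the countable collection of possible atoms and using nonnegativity) that the atomic mass of $T^+\wedge[L_t]$ off $E$ vanishes for $\sigma$-a.e. $t$. Combined with $L_t \cap E = \emptyset$, this gives $T^+\wedge[L]$ atomless for generic $L$.

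For $k \geq 1$, the key ingredient is the projection formula together with $(f^k)^*T^+ = \la_1^k T^+$ (a consequence of (H1)). For generic $L$ not contained in any fiber of $f^k$, the restriction $f^k|_L\colon L\to f^k(L)$ is a finite holomorphic map of some degree $d_k$, and the projection formula yields
\[
T^+\wedge[f^k(L)] = \frac{\la_1^k}{d_k}\,(f^k)_*\bigl(T^+\wedge[L]\bigr),
\]
which simultaneously establishes $L^1$ well-definedness of the left-hand side and expresses it as a pushforward. Since $f^k|_L$ is finite, the preimage of any point in $X$ is a finite subset of $L$, hence has measure zero for the atomless measure $T^+\wedge[L]$; so $(f^k)_*(T^+\wedge[L])$ is atomless, and so is $T^+\wedge[f^k(L)]$. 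A countable intersection over $k \in \N$ of generic conditions on $L$ remains generic, so a single choice of $L$ works for all $k$ simultaneously.

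The main obstacle is the atomless part of the $k=0$ case: a priori, restricting $u$ to a smooth curve can strictly increase its Lelong number for special tangent directions (Kiselman's phenomenon), so a purely pointwise argument based on $\nu(T^+,p)=0$ does not suffice. The Fubini averaging over the linear system is needed to eliminate such exceptional restrictions for $\sigma$-a.e. parameter, which is the content of the integral identity above.
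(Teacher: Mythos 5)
Your reduction of $k\geq 1$ to $k=0$ (pushforward by $f^k$ together with $(f^k)^*T^+=\la_1^kT^+$, plus the remark that a finite map pushes atomless measures to atomless measures) is essentially the paper's, and the well-definedness part of the $k=0$ case via Crofton/Fubini also matches. The gap is in the atomlessness step for $k=0$. From the identity $T^+\wedge\omega_H=\int_T(T^+\wedge[L_t])\,d\sigma(t)$ and the fact that the left-hand side charges no point, all you may conclude is: for each \emph{fixed} $p$, $(T^+\wedge[L_t])(\{p\})=0$ for $\sigma$-a.e.\ $t$, the null set of parameters depending on $p$. To pass to ``for a.e.\ $t$ the measure $T^+\wedge[L_t]$ has no atoms at all'' you would need the possible atoms to range over a fixed countable set, which they do not: the atoms move with $t$, and avoiding the countable Lelong set $E$ does not help, precisely because (as you yourself note) an atom of the slice can sit at a point where $\nu(T^+,p)=0$. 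The purely measure-theoretic deduction is false in general: on $[0,1]$ the family $\mu_t=\delta_t$ satisfies $\int_0^1\mu_t\,dt=\mathrm{Leb}$, which is atomless, while every $\mu_t$ is purely atomic. So Fubini averaging alone cannot rule out $t$-dependent atoms; some genuine structural input on $T^+$ is required.

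That input is exactly what the paper uses, and what its footnote warns about (the conclusion ``does not seem to follow trivially from the fact that $T^+$ does not charge curves''): the \emph{laminarity} of $T^+$. In the paper's proof one takes the uniformly laminar approximations $T^+_{\qq_i}\leq T^+$; for almost every $L$ the products $T^+_{\qq_i}\wedge[L]$ are well defined and increase to $T^+\wedge[L]$ without loss of mass, and each of them equals the geometric product $T^+_{\qq_i}\geom[L]$. Through any given point there passes at most one disk of a uniformly laminar current, so an atom at $p$ would force the transverse measure to charge the single disk through $p$; this is excluded because $T^+$ gives no mass to curves, hence the transverse measures are diffuse. Atomlessness then passes to the limit since $\m\big(T^+\wedge[L]-T^+_{\qq_i}\wedge[L]\big)\cv 0$. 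If you wish to keep your potential-theoretic framing, you would instead have to prove directly that for a.e.\ member of the linear system the restricted potential $u|_{L_t}$ has no positive Lelong numbers outside a set you control; your Fubini identity does not yield that statement.
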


\begin{proof}
Since $[f^k(L)] = f^k_*[L]$ for any $L$ disjoint from $I(f^k)$, it suffices by invariance of $T^+$ to prove the result for $k=0$.
% It is enough to prove the result for $k=1$.  
Fix a projective embedding $X\hookrightarrow \bfP^\ell$.
Then the hyperplane sections on $X$ are parametrized by the dual $\bfP^{\ell*}$.  If $dv$ is Fubini-Study
volume on $\bfP^{\ell*}$, then the Crofton formula says that $\alpha := \int_{\bfP^{\ell*}} [L]\,dv(L)$ is the restriction to $X$ of the Fubini-Study K\"ahler form on $\bfP^\ell$.  
Hence $T^+\wedge[L]$ is well-defined for almost all $L$ and $T^+\wedge\alpha = \int T^+\wedge[L]\, dv(L)$.

To show  
that $T^+\wedge [L]$ does not charge points for generic $L$, we exploit laminarity\footnote{This fact 
 does not seem to follow trivially from the fact that $T^+$ does not charge curves} . Since $\alpha$ is smooth, 
we have that $T^+_{\qq_i}\wedge \alpha$ increases to $T^+\wedge\alpha$ as 
$i\cv\infty$. Thus for  almost any $[L]$, $T^+_{\qq_i}\wedge [L]$ 
increases to $T^+\wedge[L]$.  In particular, $T^+_{\qq_i}\wedge[L]$ is well-defined, and since
$T^+_{\qq_i}$ is a sum of uniformly laminar currents, the intersection is geometric. Since $T^+$ does not charge curves, $T^+_{\qq_i}$ is diffuse and $T^+_{\qq_i}\geom [L]$ therefore gives no mass to points.  Hence neither does $T^+\wedge[L]$.
\end{proof}

\subsection{The tautological bundle.}  Let $T$ be a marked uniformly woven
current in $Q$, with associated 
measure $ m(T)$ as above. The {\em tautological bundle} over $Z(Q,C)$ is the
(closed)  set 
$$\check{Z}(Q,C) = 
\set{(D, p)  ,\ p\in \supp(D) }\subset  Z(Q,C)\times Q.$$ 
Similarly we define the tautological bundle $\check T$ 
over $T$ by restricting to 
$\supp  m(T)$. Defining the tautological bundle is a somewhat artificial way of separating the disks of $T$,  which will nevertheless be quite useful conceptually.
 
In particular, when passing to the 
tautological bundle, the web supporting $T$ becomes a weak lamination with
transverse measure $ m(T)$. 
When (as in our situation) there is no folding, we get a lamination.

Let $\check\sigma_T$ be the product of the area measure along the 
disks $D$
with $m(T)$. 
Let $\check{\pi}:\check{T}\cv Q$ be the natural projection.  Then $\check\pi_*
\check\sigma_T$ is the usual 
trace measure $\sigma_T$ of $T$.  For $\sigma_T$  a.e. $p\in Q$,
$\check\sigma_T$ induces a conditional measure $\check\sigma_T(\cdot|p)$ on the
fiber $\check\pi^{-1}(p)$, which records as a measure the set of disks passing
through 
$p$.

\subsection{Analytic continuation property.} The dynamical results in \cite{birat} depend on fine properties of strongly approximable uniformly laminar currents proved in \cite{structure}. The situation is similar here.  Therefore we now state and sketch the proof of an ``analytic continuation'' property for curves subordinate to strongly approximable woven currents.  This is a fairly straightforward extension of \cite{structure} (see in particular Remark 3.13 there).  Hence our brief account closely follows the more detailed presentation in \S 3 of that paper.  

Recall our notation from \S \ref{subs:markings}: 
we have a normalized sequence $[C_n]/d_n$ of curves with slowly growing genera converging to our current $T$, a generic linear projection $\pi:X\to\pu$, and
approximations $T_{\qq_i,n}$, $T_{\qq_i}$ of $[C_n]/d_n$ and $T$ corresponding to a sequence of subdivisions $\qq_i$ of $\pu$ into squares $Q$.  Note that in order to better cohere with notation in \cite{structure}, we are departing from our overall convention by letting $Q$ denote a {\em square} in $\pu$  
rather than a cube in $X$.
%the corresponding cube $\pi^{-1}(Q)\subset X$.  
Likewise, the notation $T_Q$ (resp. $T_{\qq}$, etc.) refers to a current in   
$\pi^{-1}(Q)$ that is a limit of families of good components of $C_n$ over $Q$. 
We may assume that each fiber of $\pi$ has unit area and intersects $C_n$ in $d_n$ points counting multiplicity.

Let $\mathcal{W}$ be a \emph{web}, i.e. an arbitrary union of smooth curves in some open set\footnote{We will only need the statement for webs of smooth curves, but the case of singular $\mathcal{W}$ could be of interest.}.  For any square $Q\subset\pu$, we define the restriction $T_Q\rest{\mathcal{W}}$ by restricting the marking measure 
for $T_Q$ to the set of disks contained in $\mathcal{W}$.  We then take $T_{\qq_i}\rest{\mathcal{W}} = \sum_{Q\in\qq_i} T_Q\rest{\mathcal{W}}$ and define $T\rest{\mathcal{W}}$ to be the increasing limit of $T_{\qq_i}\rest{\mathcal{W}}$.  Thus $T\rest{\mathcal{W}}$ depends on the choice of marking.  One can check, however, that it does not depend on the sequence of subdivisions $\qq_i$.

Here is the analytic continuation statement we will need.

\begin{thm}\label{thm:cont}
 $T\rest{\mathcal{W}}$ is a uniformly woven current.
\end{thm}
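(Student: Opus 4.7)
The strategy is to adapt the proof of the analogous statement for strongly approximable uniformly laminar currents from \cite[\S 3]{structure}, as sketched in Remark 3.13 there; the main task is to carry through the "analytic continuation" construction in the presence of multiplicities coming from the woven (rather than laminar) structure. I would first localize: it suffices to show that for an arbitrary relatively compact open set $\Omega$ in the ambient of $\mathcal{W}$, the restriction $T\rest{\mathcal{W}}\rest{\Omega}$ is an integral over chains belonging to $Z(\Omega', C)$ for some $\Omega \Subset \Omega'$ and some constant $C$ depending only on $\Omega, \Omega'$ and $\mathcal{W}$. Since only countably many smooth curves of $\mathcal{W}$ support positive transverse mass, one may further restrict attention to a countable subfamily of $\mathcal{W}$ without loss of generality.

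For each subdivision $\qq_i$ and each square $Q\in\qq_i$, $T_Q\rest{\mathcal{W}}$ is by construction a uniformly woven current whose marking is supported on disks lying inside $\mathcal{W}$. Each such disk is an open subset of a smooth curve $W\in\mathcal{W}$. The key observation, inherited from the smoothness of the members of $\mathcal{W}$, is that these small disks admit a canonical analytic continuation within $\mathcal{W}$: for disks coming from adjacent squares $Q,Q'\in\qq_i$ that sit inside the same smooth curve $W$, their union extends uniquely to a larger chain inside $W\cap\Omega'$. Performing this concatenation systematically yields, for each $i$, a marking $\widetilde m_i$ on $Z(\Omega', C_i)$ representing $T_{\qq_i}\rest{\mathcal{W}}\rest{\Omega}$, with the area of each chain bounded by the area of the relevant curve $W\cap\Omega'$. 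The tautological bundle construction of \S 2.5 provides the right framework for writing down this concatenation measurably, even when several good components of a given $C_n$ lie in the same smooth curve $W$.

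The main obstacle is then to upgrade the pointwise bound on individual chains to a \emph{uniform} bound $C_i \le C$ independent of $i$, together with a uniform bound on the total mass $\widetilde m_i(Z(\Omega', C))$; only then can one extract a weakly convergent subsequence and pass to a limit marking representing $T\rest{\mathcal{W}}\rest\Omega$ as a uniformly woven current. For the area bound, one uses that each reassembled chain sits inside a single smooth curve $W\cap\Omega'$, which has finite area depending only on $W$ and $\Omega'$; the contribution of small curves $W$ intersecting $\Omega'$ in small area can be absorbed into the mass, so one obtains a uniform $C$. For the total mass bound, one exploits the inequality $\m(T_{\qq_i}\rest{\mathcal{W}}\rest{\Omega'})\le \m(T_{\qq_i}\rest{\Omega'})\le \m(T\rest{\Omega'})$, which is finite and independent of $i$, combined with the fact that each reassembled chain has area bounded below by the area of the good component producing it (no folding, Lemma \ref{lem:graph}). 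Prokhorov's theorem then yields the desired limit marking, and the characterization of $T\rest{\mathcal{W}}$ as an increasing limit of the $T_{\qq_i}\rest{\mathcal{W}}$ identifies the limit with $T\rest{\mathcal{W}}\rest\Omega$.
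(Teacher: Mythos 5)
There is a genuine gap, and it sits exactly at the step you dispose of in one sentence: the ``canonical analytic continuation'' of the disks of $T_Q\rest{\mathcal{W}}$ across adjacent squares. Smoothness of the curves of $\mathcal{W}$ only gives continuation of the \emph{supports}: a disk of $T_Q\rest{\mathcal{W}}$ sits inside a global curve $W$, so there is never any problem extending the underlying analytic set. What the theorem actually requires is continuation of the \emph{transverse measure}: the mass that $T_Q\rest{\mathcal{W}}$ places on (the part over $Q$ of) a leaf $W$ and the mass that $T_{Q'}\rest{\mathcal{W}}$ places on the continuation of that leaf over an adjacent square $Q'$ are a priori unrelated, since the markings $m(T_Q)$ and $m(T_{Q'})$ come from independent limits of good components and a component can be good over $Q$ but bad over $Q'$. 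If these masses differ, the sum $\sum_Q T_Q\rest{\mathcal{W}}$ has a ``jump'' along the leaf and simply cannot be rewritten as an integral of chains that are closed in the larger open set, so no marking $\widetilde m_i$ with the property you assert exists for fixed $i$; in fact $T_{\qq_i}\rest{\mathcal{W}}$ need not be uniformly woven at all, and only the increasing limit is. Your proposal assumes the matching of masses implicitly (the appeal to the tautological bundle only addresses measurable bookkeeping, not this), whereas that matching \emph{is} the theorem. The paper's proof is devoted precisely to this point: one introduces the defect $\mathrm{dft}(Q)$ measuring the proportion of bad components over $Q$, uses the slow genus growth of $C_n$ to bound the total defect, deduces that $\mathrm{dft}(x)>0$ for only countably many base points, and proves by propagating along zero-defect paths that $T_{x_0,\mathcal{W}}=T_{x_1,\mathcal{W}}$ for any two zero-defect points; this produces a single global marked current $T_{\mathcal{W}}$ dominating every $T_{Q,\mathcal{W}}$, and a quantitative variant (mass loss $O(\mathrm{dft}(Q))$) gives the reverse inequality $T\rest{\mathcal{W}}\geq T_{\mathcal{W}}$. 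Without an argument of this kind your extraction of a weak limit of the $\widetilde m_i$ has nothing to converge from, and the final identification of the limit with $T\rest{\mathcal{W}}\rest{\Omega}$ is likewise unsupported.

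Two smaller points. The reduction to ``countably many curves of $\mathcal{W}$ supporting positive transverse mass'' discards the diffuse part of the transverse measure: the set of leaves charged by $T\rest{\mathcal{W}}$ is typically uncountable even though each individual leaf has zero mass, so this step is not legitimate (nor is it needed). Also, in your mass bound the lower bound on chain areas should come from the reassembled chains being leaves of $\mathcal{W}$ meeting a fixed compact set (a Lelong-type bound), not from the areas of the small good components, which can be arbitrarily small; as written that inequality does not control $\widetilde m_i(Z(\Omega',C))$. These are repairable, but the transverse-measure continuation gap is not repairable without essentially reproducing the defect argument of \cite{structure} as adapted in the paper.
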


\begin{proof}[Proof (sketch)]  
We assume without loss of generality that the leaves 
$\Gamma_\alpha$ of $\mathcal{W}$ are graphs over a disk $U$ of area less than $1/2$.  Note that this allows us to view a restriction $T_Q\rest{\mathcal{W}}$ as the restriction $T_{Q,\mathcal{W}}\rest{Q}$ of a marked uniformly woven current $T_{Q,\mathcal{W}}$ defined over all of $U$.  Moreover, rather than work with a single sequence of subdivisions, we choose three such sequences $(\qq^j_i)_{i\in \nn}$, $j=1,2,3$ so that the set of all squares forms a neighborhood basis for $\pu$.  So given any $x\in \pu$, we can choose squares $Q$ decreasing to $x$ and define the marked uniformly woven current $T_{x,\mathcal{W}}$ to be the increasing limit of the currents $T_{Q,\mathcal{W}}$.
Given another uniformly woven current $S = \int \Gamma_\alpha\,ds(\alpha)$ supported on $\mathcal{W}$, we will say that $T$ \emph{strongly dominates} $S$ over $Q$ (resp. over $x$) if the marking for $T_{Q,\mathcal{W}}$ (resp. for $T_{x,\mathcal{W}}$) dominates $ds$.

Let us also recall the notion of ``defect''.  If $Q\subset\pu$ is a square, then the defect $\mathrm{dft}(Q,n)$ is the fraction of those components of $C_n$ over $Q$ that are bad.  By strong convergence, $\mathrm{dft}(Q,n)$ converges to a limit $\mathrm{dft}(Q)$ as $n\to\infty$.  Slow growth of genera implies that the total number of bad components over all squares in a subdivision is not larger than $Cd_n$.  Hence the total defect $\sum_{Q\in\qq_i} \mathrm{dft}(Q)$ of $T_{\qq_i}$ is bounded uniformly in $i$.   Then we can define the defect $\mathrm{dft}(x) = \lim_{Q\searrow x} \mathrm{dft}(Q)$ of $T$ over a point $x\in\pu$.  The total defect bound becomes $\sum_{x\in\pu} \mathrm{dft}(x) \leq 3C$.  In particular, $\mathrm{dft}(x)$ is positive for only countably many $x\in X$.  

The key fact underlying Theorem \ref{thm:cont} is (compare \cite[Prop. 3.10]{structure} and also \cite{duval})

\begin{lem}
Suppose that $x_0,x_1\in \pu$ are points satisfying $\mathrm{dft}(x_0) = \mathrm{dft}(x_1) = 0$.  Then $T_{x_0,\mathcal{W}} = T_{x_1,\mathcal{W}}$ as marked uniformly woven currents.
\end{lem}

One proves this lemma by choosing a path $\gamma$ from $x_0$ to $x_1$ along which $T$ has zero defect at every point.  Given $\e>0$, one can then choose a finite cover of $\gamma$ by squares $Q_0\ni x_0, Q_1, \dots , Q_N\ni x_1$ such that $\sum \mathrm{dft}(Q_j) < \e$, and such that the mass of $T_{Q_0,\mathcal{W}}$ is within $\e$ of that of 
$T_{x_0,\mathcal{W}}$. 
Beginning with $T_{Q_0,\mathcal{W}}$ and $Q_0$, one then proceeds from $Q_0$ to $Q_1$ and so on, keeping only that part of $T_{Q_0,\mathcal{W}}$ supported on leaves that remain good over the new square.  At each step one loses mass proportional to the defect over the squares involved.  In the end, one arrives at $Q_N$ with a marked uniformly woven current $S$ strongly dominated by each of the $T_{Q_j,\mathcal{W}}$ but with the mass bound $\m(T_{Q_0,\mathcal{W}}-S) < C\e$.  Letting $\e$ decrease to zero, one then infers that $T_{x_1,\mathcal{W}}$ strongly dominates $T_{x_0,\mathcal{W}}$.  By symmetry, we have equality.

From the lemma, we have a marked uniformly woven current $T_\mathcal{W} = T_{x,\mathcal{W}}$ that is independent of which zero defect point we use to define it.  Since each square contains points with zero defect, we have that $T_\mathcal{W}$ strongly dominates $T_{Q,\mathcal{W}}$ for every $Q$.  In particular $T|_\mathcal{W} \leq T_\mathcal{W}$.  On the other hand, the argument used to prove the lemma gives a slightly different statement: \emph{if $Q$ is a rectangle with $\mathrm{dft}(Q) < \delta$, then there is a marked uniformly woven current $S$ strongly dominated by $T_{Q,\mathcal{W}}$ such that $\m(T_\mathcal{W} - S) < C\delta$.}  To prove this, one applies the argument from the previous paragraph to the trivial path from some zero defect point $x_0 = x_1 \in Q$ ``covered'' by two squares $Q_0,Q_1$ such that $Q_1 = Q$ and $\mathrm{dft}(Q_0)$ is arbitrarily small.  

Now we can choose a finite number of points $x_1,\dots,x_N\in\pu$ such that $\sum_{x\neq x_j} \mathrm{dft}(x) < \e$.  Since neither $T$ nor $T_\mathcal{W}$ charge fibers of $\pi$, the modified version of the lemma tells that $\sum_{Q \in \qq_i^j, x_j\notin Q} T_Q\rest{\mathcal{W}}$, which is a woven current dominated by both $T\rest{\mathcal{W}}$ and $T_\mathcal{W}$, is within mass $C\e$ of $T_\mathcal{W}$ when $i$ is large.  Shrinking $\e$, we infer $T|_\mathcal{W} = T_{\mathcal{W}}$ as desired.
\end{proof}

\section{Preliminaries on ergodic theory}\label{sec:ergodic}

We now collect some standard facts from measurable dynamics that will be useful
to us.

\subsection{The natural extension.} 
A good reference for this paragraph is the (yet unpublished) book of M. Urbanski
and F. Przytycki \cite[Chapter 1]{PU}.

The natural extension of a (non-invertible) measurable dynamical system $(X,\mu,f)$ is the
(unique up to isomorphism) invertible system $(\hat X, \hat\mu, \hat f)$ semiconjugate to $(X,\mu,f)$ by a 
projection $\hat\pi:\hat X\cv X$ with the universal property that any other semiconjugacy
$\varpi:Y\to X$ of an invertible system $(Y,\nu,g)$ onto $(X,\mu,f)$ factors through $\hat\pi$.

The natural extension $\hat X$ may be presented concretely as the space of {\em histories}, i.e.  sequences $(x_n)_{{n\leq 0}}$ such that $f(x_n)=x_{n+1}$.  Here $\hat\pi = \pi_0$ is the projection 
$\left((x_n)_{n\leq 0}\right) = x_0$ onto the $0^\mathrm{th}$ factor; $\hat f$ is the shift map
$(x_j)\mapsto (x_{j+1})$; and $\hat{\mu}$ is the unique $\hat f$-invariant measure such that 
$\hat{\mu}({\pi}_0^{-1}(A)) = \mu(A)$.  From this point of view, the 
factorization of a semiconjugacy $\eta:Y\to X$ by some other invertible system
is easy to describe.  For each $y\in Y$, we have $\varpi(y) = \hat\pi\circ \eta(y)$, where 
$\eta(y)$ is the sequence $(\varpi( g^n(y)))_{{n\leq 0}}$.

The natural extension preserves entropy: $h_{\hat\mu}(\hat f) = h_\mu(f)$. Also
$(\hat f, \hat \mu)$ is ergodic iff $(f,\mu)$ is.

\medskip

Another charaterization of $\hat\mu$ is the following. 
It corresponds to the presentation of $(\hat X, \hat\mu, \hat f)$ as the inverse
limit of the system of measure preserving maps $(\cdots X \overset{f}{\cv} X
\cdots)$. Consider the standard model of the natural extension, and denote by
$\pi_{-n}$ the projection on the $({-n})^{\mathrm th}$ factor.

\begin{lem}\label{lem:inverse} 
If $\nu$ is any
 probability measure on $\hat X$ such that for every $n\geq 0$, $(\pi_{-n})_*\nu
= \mu$, then $\nu=\hat\mu$. 
\end{lem}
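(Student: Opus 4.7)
The plan is to invoke the standard uniqueness theorem for probability measures: two probability measures on a measurable space that agree on a $\pi$-system generating the full $\sigma$-algebra must coincide.

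First I would recall that in the concrete presentation of $\hat X$ as the space of histories $(x_j)_{j\leq 0}$, the Borel $\sigma$-algebra $\hat{\mathcal{B}}$ is generated by the coordinate cylinders $\pi_{-n}^{-1}(A)$ with $n\geq 0$ and $A\in\mathcal{B}(X)$. Next, I would verify that this family is closed under finite intersections. Given $0\leq n_1\leq n_2$ and Borel sets $A_1,A_2\subset X$, the relation $x_{-n_1}=f^{n_2-n_1}(x_{-n_2})$ along every history yields
$$
\pi_{-n_1}^{-1}(A_1)\cap \pi_{-n_2}^{-1}(A_2) \;=\; \pi_{-n_2}^{-1}\!\bigl(f^{-(n_2-n_1)}(A_1)\cap A_2\bigr),
$$
which is again a cylinder of the required form. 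Iterating, every cylinder specifying finitely many coordinates lies in this family, so the cylinders form a $\pi$-system generating $\hat{\mathcal{B}}$.

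To conclude, I would use the hypothesis. For each $n\geq 0$ and each $A\in\mathcal{B}(X)$, both measures give
$$
\nu\bigl(\pi_{-n}^{-1}(A)\bigr)\;=\;\mu(A)\;=\;\hat\mu\bigl(\pi_{-n}^{-1}(A)\bigr),
$$
so $\nu$ and $\hat\mu$ coincide on the generating $\pi$-system. Dynkin's $\pi$--$\lambda$ theorem (equivalently, uniqueness in Carath\'eodory's extension theorem) then yields $\nu=\hat\mu$ on all of $\hat{\mathcal{B}}$.

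There is no serious obstacle here: the content of the lemma is simply that the inverse-limit $\sigma$-algebra is economical enough for agreement on one-coordinate marginals to propagate to the full measure. In particular, no $\hat f$-invariance of $\nu$ needs to be assumed, because the dynamical consistency $f\circ\pi_{-n-1}=\pi_{-n}$ is already built into the hypothesis via the reduction of multi-coordinate cylinders to single-coordinate ones.
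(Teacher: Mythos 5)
Your proof is correct and follows essentially the same route as the paper: both reduce cylinders involving several coordinates to single-coordinate cylinders via the relation $x_{-n_1}=f^{\,n_2-n_1}(x_{-n_2})$ along histories, and then conclude by a standard uniqueness argument for probability measures. The only cosmetic difference is that the paper checks the inequality $\nu(C)\leq\hat\mu(C)$ on cylinders and concludes $\nu=\hat\mu$ by equality of total masses, whereas you obtain equality on the generating $\pi$-system of sets $\pi_{-n}^{-1}(A)$ and invoke Dynkin's $\pi$--$\lambda$ theorem.
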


\begin{proof}
Let $C_{A_{-n},\ldots, A_{0}} = \big\{\hat x\in \hat X, \ \forall i\leq n, \
x_{-i} \in A_{-i}\big\}$ be a cylinder of depth $n$. One verifies easily that
under the assumption of the lemma, $\nu (C_{A_{-n},\ldots, A_{0}} ) \leq \hat\mu
(C_{A_{-n},\ldots, A_{0}} )$. From this we infer that $\nu \leq \hat\mu$, hence
$\nu=\hat\mu$ by equality of the masses.
\end{proof}

\subsection{Measurable partitions and conditional measures.} We will use the
formalism of measurable partitions and conditional measures, so we recall a few
facts (see \cite{bls} for a short presentation, and \cite{rokhlin, PU} for a
more systematic treatment). Recall first that a {\em Lebesgue space} is a
probability space isomorphic to the unit interval with Lebesgue
measure, plus countably many atoms.
 All the spaces we will consider in the paper are Lebesgue. A {\em
measurable partition} of a Lebesgue space is the partition into the fibers of
some measurable function. If $\xi$ is a measurable partition, a probability 
measure $\nu$ may be {\em disintegrated} with respect to $\xi$, 
giving rise to a probability measure 
$\nu_{\xi(x)}$ on almost every atom of $\xi$ (the conditional measure). 
The function 
$x\mapsto \int  \phi(y) d \nu_{\xi(x)}(y)$  is measurable and we have 
 the following disintegration
 formula: for every continuous function 
$\phi$, $$\int \left(\int   \phi(y) d \nu_{\xi(x)}(y) \right) d\nu(x) = \int\phi
d\nu.$$ Conversely, the validity of this  formula  for all $\phi$ characterizes
the conditional measures. 

Given partitions
$\xi_i$, we denote by $\bigvee \xi_i$ the {\em joint partition}, i.e.  
 $\left(\bigvee \xi_i\right)(x)  = \bigcap (\xi_i(x))$.

\medskip

If $\pi$ is a measurable map (possibly between different spaces) 
and $\xi$ a measurable partition, we  define the (measurable) partition
$\pi^{-1}\xi$ by $(\pi^{-1}\xi)(p)=\pi^{-1}(\xi(p))$.
We have the following easy lemma, whose proof is left to the reader.

\begin{lem}\label{lem:project}
Let $(\tilde Y,\tilde \nu)$, $(Y,\nu)$ be two probability spaces with a measure
preserving map $\pi:\tilde Y\cv Y$. Assume that $\xi$ is a measurable partition
of $Y$, and denote by $\tilde\xi$ the measurable partition $\pi^{-1}(\xi)$. 

Then for $\tilde \nu$-a.e. $p\in \tilde Y$,
$\pi_{*}(\tilde\nu_{\tilde\xi(p)})=\nu_{\xi(\pi(p))}.$
\end{lem}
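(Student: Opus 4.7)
The plan is to verify that the candidate family of measures $p\mapsto \pi_*\tilde\nu_{\tilde\xi(p)}$ on $Y$ satisfies the defining disintegration property for $\nu_{\xi(\pi(p))}$, and then to invoke the uniqueness of conditional measures on a Lebesgue space.

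First I would observe that since $\tilde\xi=\pi^{-1}(\xi)$, the atom $\tilde\xi(p)$ equals $\pi^{-1}(\xi(\pi(p)))$. In particular the pushforward $\rho_p:=\pi_*\tilde\nu_{\tilde\xi(p)}$ is automatically supported on the atom $\xi(\pi(p))$ and depends on $p$ only through $\xi(\pi(p))$. This lets me write $\rho_p=\sigma_{\xi(\pi(p))}$ for a measurable family of probability measures $\{\sigma_B\}$ indexed by atoms of $\xi$.

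Next, given bounded measurable $\phi:Y\cv\re$ and bounded $\xi$-measurable $\psi:Y\cv\re$, I would combine $\pi_*\tilde\nu=\nu$ with the disintegration of $\tilde\nu$ along $\tilde\xi$:
$$\int_Y \psi\phi\,d\nu=\int_{\tilde Y}(\psi\circ\pi)(\phi\circ\pi)\,d\tilde\nu=\int_{\tilde Y}(\psi\circ\pi)(p)\left(\int \phi\circ\pi\,d\tilde\nu_{\tilde\xi(p)}\right)d\tilde\nu(p),$$
pulling $\psi\circ\pi$ outside the inner integral by virtue of its $\tilde\xi$-measurability, which is immediate from $\tilde\xi=\pi^{-1}\xi$. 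The inner integral equals $\int \phi\,d\sigma_{\xi(\pi(p))}$ by definition of $\rho_p$, and pushing back down by $\pi$ yields
$$\int_Y \psi\phi\,d\nu=\int_Y \psi(y)\left(\int \phi\,d\sigma_{\xi(y)}\right)d\nu(y).$$
This is exactly the disintegration identity that characterizes the conditional measures $\nu_{\xi(y)}$.

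To close the argument I would test against a countable dense family of functions $\phi$ and the indicator functions of atoms of $\xi$ in the role of $\psi$, and then apply the standard uniqueness clause for conditional expectations. This produces $\sigma_{\xi(y)}=\nu_{\xi(y)}$ for $\nu$-a.e.\ $y$, which pulls back to $\pi_*\tilde\nu_{\tilde\xi(p)}=\nu_{\xi(\pi(p))}$ for $\tilde\nu$-a.e.\ $p$. The only delicate point, hardly an obstacle given that all spaces are Lebesgue, is to make sure the exceptional null set in $Y$ can be chosen independently of the test function $\phi$; this is precisely why one works with a countable test family rather than verifying the identity one $\phi$ at a time.
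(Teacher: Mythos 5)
Your argument is correct, and in fact the paper offers no proof of this lemma at all (it is explicitly ``left to the reader''), so there is no in-paper argument to compare with; what you give is the standard Rokhlin-type verification. You push the disintegration of $\tilde\nu$ along $\tilde\xi=\pi^{-1}\xi$ down by $\pi$, note that the resulting family $\sigma_{\xi(y)}:=\pi_*\tilde\nu_{\tilde\xi(p)}$ (which indeed depends only on $\xi(\pi(p))$ and is carried by that atom) satisfies $\int\psi\phi\,d\nu=\int\psi(y)\bigl(\int\phi\,d\sigma_{\xi(y)}\bigr)d\nu(y)$ for all bounded $\xi$-measurable $\psi$, and conclude by a.e.\ uniqueness of conditional expectations tested on a countable family of $\phi$. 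Two small points of hygiene: at the end, $\psi$ should range over indicators of $\xi$-saturated measurable sets (sets in the $\sigma$-algebra generated by $\xi$), not over indicators of individual atoms, since atoms typically have $\nu$-measure zero and their indicators are vacuous test functions; this costs nothing because your identity already holds for every bounded $\xi$-measurable $\psi$, so the usual uniqueness clause applies verbatim. Also, when you replace the outer integral over $\tilde Y$ by one over $Y$, you are using that the $\tilde\xi$-measurable (hence $\pi$-fiber-constant) function $p\mapsto\int\phi\circ\pi\,d\tilde\nu_{\tilde\xi(p)}$ descends to a $\nu$-measurable function of $y=\pi(p)$; this is exactly where the Lebesgue-space hypothesis (together with $\pi_*\tilde\nu=\nu$) is invoked, and it is the same hypothesis that guarantees existence and essential uniqueness of the conditional measures in the first place. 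With these remarks the proof is complete and is the natural one.
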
 

\section{Outline of proof of Theorem \ref{thm:complete}.}\label{sec:idea}

Before embarking to the proof, we give an overview of the main arguments leading
to Theorem \ref{thm:complete}. The proof of {\it i.} (\S \ref{sec:negative}) is based
on the study of the contraction properties along disks subordinate to the
laminar current $T^+$. This is delicate but fairly similar to \cite{birat},
and it is achieved in \S \ref{sec:negative}.

The proof of {\it ii.} is in the same spirit but with many more differences.
The fact that the current $T^-$ is only woven leads to substantial difficulties, 
the first of which is that there is no natural web associated to such a current.
This has been overcome by De Th{\'e}lin in \cite{dt-saddle} who has given a
short argument leading to {\em ii.}

Nevertheless to compute the entropy and establish local product structure we
need a finer analysis of $\mu$ and its natural extension. We therefore take a
longer path.  For the bounds on both positive and negative exponents, we use an
argument ``\`a la Lyubich'' that is suitable only for showing contraction.  So
for the positive exponent, we need to iterate backward.  To make this possible,
we first carefully select 
(\S \ref{sec:positive}) a set of {\em distinguished histories} which has
full measure in the natural extension, and exhibits  exponential contraction
along disks.  Then (\S \ref{sec:tautological}) we use the tautological
extension of $T^-$ to construct a dynamical system $(\check f, \check\mu)$ that
refines $(f,\mu)$
by making disks subordinate to $T^-$ disjoint. 
In particular, we may apply the
theory of measurable partitions and conditional measures to $\check f$.  
However, we cannot compute the entropy of $\check f$ using the Rokhlin formula as in
\cite{birat}, because constructing invariant partitions requires invertible
maps.  For entropy we consider finally the 
natural extension of $\check f$.  Happily, the natural extension of $\check f$
turns out to be isomorphic to that of $f$ 
(\S \ref{sec:entropy}; see also the figure on p.\pageref{figure:tautological} 
for a synthetic picture of the relationship between these spaces). 
Hence $h(f,\mu) = h(\check f, \check \mu)$. 
Once we know that $h(f,\mu) = \log\lambda_1$, the fact that $\mu$ is a measure of maximal entropy  follows from   Gromov's inequality \cite{gromov}. 

The product structure of the natural extension of $\mu$ follows from the above
analysis 
and the analytic continuation property of the disks subordinate to strongly
approximable woven/laminar currents (\S \ref{sec:product}).

\medskip

To understand some of the subtleties to come, we
encourage the reader to consider the following simple examples. 
%They show that 
%there is no symmetry among preimages
% for mappings with small topological degree.

\begin{exa}\label{ex:monomial}
 Let $f:\cd\cv\cd$ be a monomial map with small topological degree, that is of
the form 
$(z,w)\mapsto (z^aw^b,z^cw^d)$ where the matrix 
$A = \left(\begin{smallmatrix}
a& b\\ c & d\end{smallmatrix}\right)\in GL_2(\zz)$ satisfies 
$\abs{\mathrm{det}(A)} < \rho(A)$, e.g. $A = \left(\begin{smallmatrix} n & 1\\ 1 &
1\end{smallmatrix}\right)$ with $n\geq 3$. In this case the dynamics of $f$ is
well known and the unique measure of maximal entropy is
the Lebesgue measure on the unit torus $\set{\abs{z}=\abs{w}=1}$, which is a
{\em totally invariant} subset. Notice also that for this mapping, unstable
manifolds do not depend on histories.
\end{exa}

\begin{exa}\label{ex:henon}
 Let $f_0$ be a complex H\'enon map of degree $d\geq 3$, of the form $f_0(z,w)=
(aw+p(z), az)$. Consider now an integer $1<e<d$ and for  $b\ll a$ consider the
map $f_b(z,w)= (aw+p(z), az+b z^e)$. 
It is easy to prove that $f_b$ is algebraically stable in $\cd$, with 
$\lambda_1 = d$ and $\lambda_2 = e$. Now there exists a large bidisk 
$B$ where $f_b$ is an H\'enon-like map in the sense of \cite{hl}, in particular
injective. 
So it has a unique measure of maximal entropy $\log d$ in $B$, and it is easy to
prove that 
this measure is actually $\mu = T^+(f_b)\wedge T^-(f_b)$, where $T^\pm(f_b)$
are the global currents constructed above.

So in this case most preimages of points in $\supp(\mu)$ actually escape
$\supp(\mu)$ and therefore do not contribute to the dynamics in the natural extension.
We see with this example that it is insufficient to prove a statement like: ``for  a
disk $\Delta\subset f^n(L)$, we have at least $(1-\e)\lambda_2^n$ contracting inverse branches'', 
since the remaining histories could have full measure in the natural extension.  
Our process of selecting distinguished inverse branches in 
Proposition \ref{prop:positive} will give a way of
identifying the ``good'' preimages. 
\end{exa}

In view of these examples, one may wonder  whether there exists a rational map with small topological degree such that the
number of preimages of points on the support of $\mu$ is a constant  strictly between 1 and $\la_2(f)$?

Likewise, is there an example where the number of preimages of points on the support of $\mu$ is essentially non-constant?  By ``essentially'' we mean that the
degree does not only vary on a set of zero measure.  Notice that this is not incompatible with ergodicity or mixing; indeed it is
easy to construct unilateral subshifts of finite type 
with this property: consider for instance the subshift on two symbols $0\mapsto
1$, $1\mapsto 0$ or 1. 
This situation can also occur for basic sets of Axiom A endomorphisms.

\section{The negative exponent}\label{sec:negative}

In this section we study the contraction properties along $T^-$ to 
estimate the  negative exponent.\footnote{Lyapunov exponents are well-defined
only when (H3) is satisfied: of course here we mean item {\em i.} of Theorem \ref{thm:complete}.}
The main lemma is the following. Recall that $T^+\wedge \tau$ denotes the
transverse measure induced by $T^+$ on $\tau$. 

\begin{lem}[\`a la Lyubich]
 Let $\el = \set{D_t, \ t\in \tau}$ be a flow box subordinate to $T^+$. 
Then for every $\e>0$, there exists a positive constant $C(\e)$ and a transversal
$\tau(\e)\subset \tau$ such that 
$\m(T^+\wedge\tau(\e))\geq(1-\e)\m(T^+\wedge\tau)$ and
$$\forall n\geq 1, \ \forall t\in \tau(\e) ,\ \area(f^n(D_t))\leq \frac{C(\e)n^2
\lambda_2^n}{\lambda_1^n}$$
\end{lem}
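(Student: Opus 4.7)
The plan is to follow a standard Lyubich-style Markov argument: first control the average of $\area(f^n(D_t))$ against the transverse measure, then remove a small set of transversals where the area is anomalously large.

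Let $T^+_\mathcal{L} = \int_\tau [D_t]\, d(T^+\wedge\tau)(t)$ be the uniformly laminar current supported on $\mathcal{L}$. Since $\mathcal{L}$ is subordinate to $T^+$ we have $T^+_\mathcal{L}\leq T^+$. For each disk $D_t$ the area of its image, counted with multiplicity, is $\area(f^n(D_t)) = \int_{D_t}(f^n)^*\omega = \m(f^n_*[D_t])$. Fubini thus yields
$$
\int_\tau \area(f^n(D_t))\, d(T^+\wedge\tau)(t) \;=\; \m\bigl(f^n_* T^+_\mathcal{L}\bigr) \;\leq\; \m\bigl(f^n_* T^+\bigr).
$$
Next I invoke the functorial identity satisfied by the canonical current: from $f^*T^+=\lambda_1 T^+$ and $f_* f^* = \lambda_2\cdot \mathrm{id}$, one obtains $f_*T^+ = (\lambda_2/\lambda_1) T^+$, hence $\m(f^n_* T^+) = (\lambda_2/\lambda_1)^n \m(T^+) =: M(\lambda_2/\lambda_1)^n$. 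So the mean of $\area(f^n(D_t))$ with respect to $T^+\wedge\tau$ decays like $(\lambda_2/\lambda_1)^n$.

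Now I apply Markov's inequality at each scale $n$ with threshold thickened by an $n^2$ factor. Setting
$$
\tau_n \;=\; \Bigl\{ t\in\tau \,:\, \area(f^n(D_t)) > \frac{C(\e)\, n^2 \lambda_2^n}{\lambda_1^n}\Bigr\},
$$
one has $\m(T^+\wedge\tau_n) \leq M/(C(\e) n^2)$. Summing over $n$ (Borel--Cantelli),
$$
\m\Bigl(T^+\wedge\bigcup_{n\geq 1}\tau_n\Bigr) \;\leq\; \frac{\pi^2 M}{6\, C(\e)}.
$$
Choosing $C(\e)$ large enough that this is at most $\e\,\m(T^+\wedge\tau)$, and defining $\tau(\e) = \tau\setminus \bigcup_n \tau_n$, gives a transversal of the desired measure on which the announced uniform-in-$n$ area bound holds.

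The main conceptual input is the invariance $f_*T^+ = (\lambda_2/\lambda_1) T^+$ at the level of currents rather than merely of cohomology classes; this is precisely the kind of regularity provided by (H1) and the constructions of \cite{part1}. Once that is in hand the rest is a one-line Markov + Borel--Cantelli affair, and the $n^2$ slack built into the bound is exactly what makes the Borel--Cantelli sum converge while costing only a constant $C(\e)$ depending on the prescribed tolerance.
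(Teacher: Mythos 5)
Your outer layer — the reduction to a fixed-$n$ estimate with an $n^2$ safety factor, followed by Markov and Borel--Cantelli — is exactly the paper's strategy. The gap is in the inner estimate: the ``functorial identity'' $f_*T^+=(\lambda_2/\lambda_1)\,T^+$ is false in general, and so is the mass computation you deduce from it. First, $f_*f^*=\lambda_2\,\mathrm{id}$ fails for meromorphic surface maps on currents (and on classes): working on the graph one finds $f_*f^*S=\lambda_2 S$ \emph{plus} an effective correction carried by the curves $f(I^+)$, which is nonzero as soon as $S$ has positive Lelong numbers on $I^+$ --- and $T^+$ typically does, as recalled in the introduction. Second, and decisively, the quantity you need is purely cohomological: since $(f^n)_*T^+$ is positive closed, $\m\big((f^n)_*T^+\big)=\{T^+\}\cdot (f^n)^*\{\omega\}$ by the projection formula, and under (H1) $(f^n)^*\{\omega\}\sim c\,\lambda_1^n\{T^+\}$, so $\m\big((f^n)_*T^+\big)\asymp \lambda_1^n\,\{T^+\}^2$, which \emph{grows} like $\lambda_1^n$ whenever $\{T^+\}^2>0$. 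This happens inside the paper's own setting: for the maps of Example \ref{ex:henon} viewed on $\pd$ one has $\{T^+\}=\{H\}$, hence $\m\big((f^n)_*T^+\big)=\lambda_1^n$, not $(\lambda_2/\lambda_1)^n$. (Recall that $\{T^-\}$, not $\{T^+\}$, is the $\lambda_1$-eigenvector of $f_*$.) So your chain ``mean $=\m(f^n_*T^+_{\el})\le \m(f^n_*T^+)=(\lambda_2/\lambda_1)^nM$'' breaks at the last equality, and the preceding inequality is already a fatal loss: its right-hand side grows while the left-hand side is precisely what must decay; the pushforward of the global current picks up mass along $f^n(I^+)$ and along directions that have nothing to do with the flow box.

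The mechanism that actually gives the fixed-$n$ estimate is internal to the lamination of $T^+$, and this is where laminarity is used in an essential way. The paper proves the tail bound $\m(T^+\wedge\tau_\alpha)\le \lambda_2^n/(\alpha\lambda_1^n)$ for $\tau_\alpha=\{t:\area(f^n(D_t))>\alpha\}$ as follows: by $f^*T^+=\lambda_1T^+$ (applied on pieces where $f^n$ is a biholomorphism), the transverse measure of $f^n(\tau_\alpha)$ is $\lambda_1^n$ times that of $\tau_\alpha$; the images $f^n(D_t)$ (after removing a small neighborhood of $C(f^n)$) are again disks subordinate to $T^+$, each of area $>\alpha$; a given point of $X$ lies on at most $\lambda_2^n$ of these image disks; and the total trace mass of $T^+$ carried by the image lamination is at most $1$. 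A Fubini/holonomy-invariance computation along the leaves then yields $\alpha\lambda_1^n\,\m(T^+\wedge\tau_\alpha)\le\lambda_2^n$, after which your Markov--Borel--Cantelli step applies verbatim. If you want to keep the ``bound the average, then Markov'' format, you must estimate $\m(f^n_*T^+_{\el})$ directly by this kind of leafwise counting (bounded overlap $\lambda_2^n$ against transverse expansion $\lambda_1^n$); comparing with the global mass of $f^n_*T^+$ cannot give any decay.
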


\begin{proof} The method is  similar to \cite{birat}, though it requires 
substantial adaptation. We freely use the 
structure properties of strongly approximable laminar currents; 
the reader is referred to \cite{birat, structure} for  more details.

 It is enough to prove that for every $\alpha$ and every fixed $n$,
the transverse measure of the set of disks $D_t$ such that
$\area(f^n(D_t))>\alpha$ is 
smaller than $\frac{\lambda_2^n}{\alpha \lambda_1^n}$. Indeed if this is the
case, then  for every  $n\geq 1$
and every $c>0$, 
$$
(T^+\wedge\tau)\left(\set{t, \ \area(f^n(D_t)) >  \frac{c n^2
\lambda_2^n}{\lambda_1^n} }\right) < \frac{1}{cn^2},
$$
and it will suffice to sum over all integers $n$ and adjust the constant $c$ to
get the conclusion of the lemma.

\smallskip

We first need to analyze the action of $f$ on the transverse measure. This is a
local problem. 
Recall that  $T^+$ gives no mass to the critical set \cite{part1}, 
and consider an  open set $U$ such that 
 $f: U\cv f(U)$ is a biholomorphism. If $\tau$ is a transversal to some flow box
$\el$ contained 
in $U$, then $f(\tau)$ is a transversal to $f(\el)$ and from  the invariance
relation $f^*T^+= \lambda_1 T^+$ we infer that 
 $\m(T^+\wedge f(\tau))  = \lambda_1 \m(T^+\wedge \tau)$. 

\smallskip

Consider our original flow box $\el$ and fix $n$ and $\alpha$. Let
$\tau_\alpha\subset \tau$ be the set of disks such that 
$\area f^n(D_t)> \alpha$, and $\el_\alpha$ be the corresponding flow box. 
Sliding $\tau$ along the lamination and discarding a set of transverse measure
zero if necessary, we can arrange  that 
\begin{itemize}
\itm $\tau$ is contained in a holomorphic disk transverse to $\el$;
\itm $\tau\cap I(f^n)=\emptyset$ and $\tau\cap C(f^n)$ 
is a finite set;
\itm for every $p\in \tau_\alpha$, $p$ is the unique preimage of $f^n(p)$ in
$\tau_\alpha$. 
\end{itemize}
We want to estimate the transverse measure of $\tau_\alpha$.
 We exhaust $\tau_\alpha$ by compact subsets $\tau'_\alpha$, such that
$\tau'_\alpha \cap C(f^n) = \emptyset$, and 
for simplicity rename $\tau'_\alpha$ into $\tau_\alpha$. 
Observe that
$f^n(\el_\alpha)$ is contained in $T^+$ so its total mass relative to $T^+$, 
$\m(T^+ \wedge f^n(\el_\alpha))$ is bounded by  $1$. 
On the other hand we will prove that 
$$
\m(T^+ \wedge f^n(\el_\alpha))\geq
\frac{\alpha\lambda_1^n}{\lambda_2^n} \m(T^+\wedge \tau_\alpha),
$$
hence giving the desired result.
In constrast to the birational case ($\lambda_2= 1$), 
we have to take into account the fact that $f^n(\el_\alpha)$ will overlap
itself, in a way controlled by the topological degree 
$\lambda_2^n$. 

To give the idea for the rest of the proof, we first consider a model
situation: 
imagine a single disk $D$ which admits a partition into $\lambda_1^n$ 
pieces $D_i$ such that $f^n(D_i)$ has area greater than 
$\alpha$. Then $f^n(D)= \bigcup f^n(D_i)$ has area greater than $\alpha
\lambda^n_1/\lambda_2^n$ because 
$$
\forall p \in f^n(D), \ \#\set{i,\ p\in f^n(D_i)}\leq \lambda_2^n.
$$ 
The following computation is a ``foliated'' version of this counting argument. 

\smallskip

If $p\in \el\setminus C(f^n)$, $f^n$ is a biholomorphism in some neighborhood
$N$ of $p$. If $D_p$ denotes the disk of $\el$ through $p$, 
which is subordinate to $T^+$, then $f^n(D_p\cap N)$ is a disk subordinate to
$T^+$. Recall from \cite{structure} that   disks subordinate to 
$T^+$ do not intersect (i.e. they overlap when they intersect), 
so there is an unambiguous notion of leaf subordinate to $T^+$ (union of
overlapping subordinate disks),
and a disk subordinate to $T^+$ is contained in a unique leaf. Notice that for
every $t\in \tau_\alpha$, $D_t\cap C(f^n)$ is an at most countable number of
points. Since $f^n(D_t)$ has area greater than $\alpha$, we can remove a
small neighborhood $N$ of $C(f^n)$ such that
for every $t$, 
$\area(f^n(D_t\setminus N))>\alpha$. We further assume that $N\cap \tau_\alpha =
\emptyset$. 
Now if $p\in f^n(\el\setminus N)$, locally there
is a unique disk through $p$, subordinate to $T^+$ (namely $f^n(D_q\cap N(q))$
where $q\in \el\setminus N$, is any preimage of $p$ and $N(q)$ is a small
neighborhood of $q$), and we conclude that $f^n(\el\setminus N)$ is a piece of
lamination subordinate to $T^+$. 
If $p\in f^n(\el\setminus N)$, we denote by $L_p$ the leaf subordinate to $T^+$
through $p$. 

Consider the closed set $f^n(\tau_\alpha)$. This is locally a transversal to 
the lamination $f^n(\el\setminus N)$, but globally it can  
intersect a leaf many times. Its total transverse mass is 
$\lambda_1^n \m(T^-\wedge \tau_\alpha)$. Recall that every $p\in
f^n(\tau_\alpha)$ admits a unique preimage 
$q\in \tau_\alpha$, and $f^n(D_q\setminus N)$ is a piece of a holomorphic curve
through $p$ of area greater than $\alpha$, that 
we will denote by $\Delta_p$. By construction, a point in $f^n(\el\setminus N)$
belongs to at most $\lambda_2^n$ such $\Delta$'s.

Let $dA_L(x)$ denote area measure along the leaf $L$.
We have 
\begin{eqnarray} \label{eq:transverse}
 \alpha\lambda_1^n \m(T^+\wedge \tau_\alpha) &=&  \alpha \m(T^+\wedge
f^n(\tau_\alpha))\\ \notag
&\leq &\int_{f^n(\tau_\alpha)} \area(\Delta_p)
 d(T^+\wedge f^n(\tau_\alpha))(p) \\ \notag &=&
\int_{f^n(\tau_\alpha)}\left(\int_{L_p} \mathbf{1}_{\Delta_p}(x) dA_{L_p}(x)
\right) 
d(T^+\wedge f^n(\tau_\alpha))(p). 
\end{eqnarray}

Now, take a partition of $f^n(\el\setminus N)$ into finitely many flow boxes,
and in each flow box, project 
$f^n(\tau_\alpha)$ on a reference transversal $\tau_{\mathrm{ref}}$. 
For simplicity, we will assume that there is only one such flow box.  The
general case follows easily. 
We can decompose $f^n(\tau_\alpha)$, up to a set of zero transverse measure,
into at most countably many subsets $\tau_i$ 
 intersecting each leaf in a single point in the flow box, so that we get an
injective map 
$h_i: \tau_i\cv \tau_{\mathrm{ref}}$. Since the transverse measure is by
definition invariant under holonomy,
$(h_i)_*(T^+\wedge \tau_i) = (T^+\wedge\tau_{\mathrm{ref}})\rest{h_i(\tau_i)}$.
We can thus resume computation 
\eqref{eq:transverse} as follows:
\begin{align*}
 \int_{f^n(\tau_\alpha)}\int_{L_p} \mathbf{1}_{\Delta_p}(x) dA_{L_p}(x) 
d(T^+\wedge &f^n(\tau_\alpha))(p) = 
\sum_i \int_{\tau_i} \int_{L_p} \mathbf{1}_{\Delta_p}(x) dA_{L_p}(x)d(T^+\wedge
\tau_i)(p) \\
&= \int_{\tau_{\mathrm{ref}}}\int_{L_q}\sum_i \mathbf{1}_{\Delta_{h_i^{-1}(q)}}
(x)dA_{L_q}(x)d(T^+\wedge \tau_{\mathrm{ref}})(q)\\
&\leq \int_{\tau_{\mathrm{ref}}}\int_{L_q} \lambda^n_2
\mathbf{1}_{\bigcup_i\Delta_{h_i^{-1}(q)}}
(x)dA_{L_q}(x)d(T^+\wedge \tau_{\mathrm{ref}})(q) \\
&=\lambda^n_2 \int_{\tau_{\mathrm{ref}}}
\area\left(\bigcup_i\Delta_{h_i^{-1}(q)}\right) d(T^+\wedge
\tau_{\mathrm{ref}})(q) \\
&=\lambda^n_2 \m_{T^+}(f^n(\el\setminus N)) \leq \lambda^n_2,
\end{align*}
where the inequality on the third line comes from the fact that a given point
belongs to at most $\lambda_2^n$ disks $\Delta$.
We conclude that $\m(T^+\wedge \tau_\alpha) \leq \lambda^n_2/\alpha\lambda_1^n$
which was the desired estimate. 
\end{proof}

\begin{proof}[Proof of item {i.} in Theorem \ref{thm:complete}]
The conclusion now follows directly from \cite[p.236]{birat}. 
% For completeness
%we sketch the proof, observing also that 
%it was superfluous  in \cite{birat} to take sets $\mathbf{N}'$ of integers of
%density 1. 
Here we give a simpler argument that avoids using the Birkhoff Ergodic Theorem.  This shows that taking sets of density 1 in \cite{birat} was superflous.

Since $\mu$ is the geometric intersection of $T^+$ and $T^-$, we can fix a finite disjoint union $A=\el_1\cup\cdots \cup\el_N$ of flow boxes $\el_i$ for $T^+$ such that
$\mu(\el_1\cup\cdots \cup\el_N)\geq 1-{\e}$.  For each $p\in \el_i$ let $D_p$ be the disk of $\el_i$ through $p$, and let $e^s(p)$ be the unit tangent vector to $D_p$ at $p$. 
By the previous lemma, we may discard from $A$ a set of plaques of small transverse measure (hence of small $\mu$ measure)
 to arrange that $\area(f^n(D_p))\leq \frac{c(\e)
n^2\lambda_2^n}{\lambda_1^n}$ for every $p\in A$ and {\em every} $n\geq 1$.  By slightly reducing the disks (hence losing one more $\e$ of mass) and applying the 
Briend-Duval area-diameter estimate \cite{briend-duval}, we further arrange that the diameter of $f^n(D_p)$ is controlled by $C(\e)n\lambda_2^{n/2}/{\lambda_1^{n/2}}$. 
Here, the constant depends on the geometry of the disks $D_p$, hence ultimately on $\e$ since the disks in $\el_1\cup\cdots \cup\el_N$ have bounded geometry.  So for $n$
large enough, $f^n(D_p)$ is contained in a single coordinate chart of $X$, and we infer that the derivative $df^n\left(e^s(p)\right)$ has norm controlled by $C(\e)
n\lambda_2^{n/2}/{\lambda_1^{n/2}}$. Thus
\begin{equation}\label{eq:lyap}
 \limsup_{n\cv \infty} \unsur{n}\log\abs{df^n\left(e^s(p)\right)}\leq
-\frac{\log({\lambda_1}/{\lambda_2})}{2}
\end{equation}
for all $p\in A$.  Letting $\e\to 0$, we obtain the same inequality holds for $\mu$-almost every $p$.
\end{proof}

\section{Distinguished inverse branches}\label{sec:positive}

In this section we study the dynamics along the current $T^-$.  This will lead to the estimate on the positive exponent and lay the groundwork for computing
entropy and proving local product structure.  %We start with backward iteration. 
Our goal is to prove the following assertion about backward iteration on which nearly all subsequent results depend.

\begin{pro}\label{prop:positive}
For any $\e> 0$, there exists a family $\qq$ of disjoint cubes, a current $T^-_\qq\leq T^-$ (resp. $T^+_\qq\leq T^+$) marked and uniformly woven (resp. uniformly laminar) in each $Q\in\qq$, and a set of distinguished histories  such that
\begin{enumerate}[i.]
\item $\m(T^+_\qq\geom T^-_\qq) \geq 1 - \e$;
\item the set of distinguished histories $(x_j)$ with $x_0\in \supp T^+_\qq\geom T^-_\qq$ has measure $\geq 1-\e$ in the natural extension $\hat \mu$ of $\mu$;
\item if $f_{-n}$ is a distinguished inverse branch of $f^n$ along a disk $D$ in the web supporting $T^-_\qq$, then the
derivative of $f_{-n}$ is controlled by  $C(\e)n/\lambda_1^{n/2}$;
\item for every disk $D$ in the web supporting $T^-_\qq$, there exists at least one compatible sequence of distinguished inverse branches along $D$.
\end{enumerate}
\end{pro}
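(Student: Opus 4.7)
The plan is to run a backward analog of the Lyubich-style estimate from Section \ref{sec:negative}, now applied to disks in the web supporting $T^-$ iterated backward by inverse branches of $f^n$.  Two new issues appear relative to the negative-exponent case.  First, each web-disk $D$ has $\la_2^n$ inverse branches of $f^n$, whose preimages carry average area of order $(\la_1/\la_2)^n\area(D)$, so ``most'' branches are bad and the distinguished ones must be selected.  Second, the web of $T^-_\qq$ is only a cover, not a partition, so propagating estimates across overlapping preimage disks requires the analytic continuation result of Theorem \ref{thm:cont}.

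First I would set up the geometric data following \S\ref{subs:markings}, taking the cubes small enough that $\m(T^+_\qq\geom T^-_\qq)\geq 1-\e/2$ and so that by Lemma \ref{lem:chain} the markings of $T^\pm_\qq$ are supported on honest disks; this handles (i).  The heart of the argument is a backward Lyubich estimate.  For a flow-box $\el\subset Q$ of $T^-_\qq$ with transversal $\tau$, I would show that the mass of pairs $(t,f_{-n})$ for which
\begin{equation*}
\area\bigl(f_{-n}(D_t)\bigr)\cdot\m\bigl(T^+_\qq\wedge f_{-n}(D_t)\bigr) > \alpha
\end{equation*}
is bounded by $C/\alpha$ independently of $n$.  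The calculation parallels the Lyubich lemma of \S\ref{sec:negative}: the invariance $f_*T^-=\la_1 T^-$ replaces $f^*T^+=\la_1 T^+$, and preimage disks overlap at most $\la_2^n$ times at a generic point.  The crucial refinement over a naive count is the $T^+$-weighting, which captures the fact that the $\hat\mu$-distribution of histories is governed by the $T^+\wedge T^-$ intersection rather than by uniform counting over inverse branches.  Choosing $\alpha=c(\e)n^2/\la_1^n$ and summing geometrically over $n$ singles out, up to $\e$-error in transverse measure, a set of pairs --- the \emph{distinguished pairs} --- satisfying $\area(f_{-n}(D_t))\leq c(\e)n^2/\la_1^n$ for every $n$.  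The Briend--Duval area-diameter inequality then converts this into the derivative bound in (iii).

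To package the result, a history $(x_j)\in\hat X$ is declared distinguished when $x_0\in\supp(T^+_\qq\geom T^-_\qq)$ and the inverse branches it realizes form a compatible distinguished sequence along some disk of $T^-_\qq$ through $x_0$.  Statement (ii) then follows from the identity $\pi_{-n\,*}\hat\mu=\mu$ (Lemma \ref{lem:inverse}) applied at each level $n$, together with the previous $\e$-accounting summed geometrically.  For (iv), the single-disk version of the estimate above shows that at each level $n$ the $T^+$-weighted fraction of bad inverse branches along a fixed disk $D$ is strictly less than $1$, so at least one inverse branch is distinguished at every level; a diagonal or compactness argument on the $\la_2$-ary tree of inverse branches then yields at least one compatible sequence.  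The main obstacle throughout is the weighting step: without it the naive count would allow bad-branch fractions diverging like $(\la_1/\la_2)^n$, and it is only by weighting with $T^+_\qq$ --- whose consistency under iteration is guaranteed by the analytic continuation of Theorem \ref{thm:cont} --- that the argument closes.
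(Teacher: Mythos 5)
There is a genuine gap, and it sits exactly at the point the paper warns about in \S\ref{sec:idea} (Example \ref{ex:henon}): a selection of inverse branches by counting or weighting on a fixed disk does not by itself control the $\hat\mu$-measure of the histories you have selected, since $\hat\mu$ may distribute its backward mass among preimages in a completely asymmetric way. Your attempted fix --- weighting each branch by $\m\bigl(T^+_\qq\wedge f_{-n}(D_t)\bigr)$ --- does not capture this asymmetry: by the invariance $f^*T^+=\lambda_1 T^+$ and the injectivity of $f^n$ on each branch, every inverse-branch image of $D_t$ carries exactly $\lambda_1^{-n}\m(T^+\wedge[D_t])$ of $T^+$-mass, so your weight is branch-independent and the ``weighted Lyubich estimate'' reduces to the naive area count you were trying to improve. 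The backward conditionals of $\hat\mu$ are governed by how the mass of $T^-$ is assembled from histories, not by $T^+$. Consequently the deduction of item \emph{ii.} from ``$(\pi_{-n})_*\hat\mu=\mu$ plus the $\e$-accounting'' is unjustified: the marginal identities say nothing about which branches carry $\hat\mu$-mass, and this is precisely where a proof is needed. The tree/compactness argument for \emph{iv.} inherits the same problem, since ``distinguished at level $n$'' in your sense is not a nested condition along branches.

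The paper's proof is structured to avoid this trap. The distinguished branches are defined \emph{structurally}, not by an estimate: for $k\geq 2n$ the map $f^n:f^{k-n}(L)\to f^k(L)$ is $1$--$1$ off a finite set, so each good plaque of the approximating current has a \emph{canonical} inverse branch, and ``distinguished'' means a normal limit of these canonical branches along disks of $T^-_{k,\qq}$ converging to $D$. The area bound (hence \emph{iii.}, via Briend--Duval and Cauchy) then follows from a simple disjoint-area count on the curve $f^{k-n}(L)$ (total area $O(\lambda_1^{k-n})$), with no $\lambda_2^n$-overlap argument and no weighting at all; \emph{iv.} comes from normal limits plus diagonal extraction. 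The measure statement \emph{ii.} is the genuinely new step: one pushes $\mu_{k,\qq}$ backward under the canonical $f_{-N}$ on $f^k(L)$ and shows that any cluster value $(f_{-N})_*\mu_\qq$ is dominated by $\mu$ and has mass $\geq 1-\e$, so its support is contained in $\pi_0\bigl(\hat f^N(\xcd_{-N,\qq})\bigr)$; the averaging $T^-_k=\frac2k\sum_{i=\lfloor k/2\rfloor}^k S^-_i$ is introduced precisely so that $(f_{-N})_*\mu_{k,\qq}\leq T^+\wedge T^-_{k-N}+\sigma_k$ with $\m(\sigma_k)=O(N/k)$. Your proposal contains none of this mechanism, and without it items \emph{ii.} and \emph{iv.} are not proved. (Theorem \ref{thm:cont} is also not what licenses any of these steps; it is used later, for the product structure.)
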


The proof of the bound in {\it iii.} originates in \cite{birat} and \cite{dt-saddle}, but to allow
for further applications, we undertake a more elaborate analysis of $\mu$.  The exact meaning of the word ``distinguished'' appearing throughout the statement will be made clear during the proof.

\begin{proof}[Proof of Proposition \ref{prop:positive}]
% The proof will go as follows 
For a generic hyperplane section $L$, $S_k := \lambda_1^{-k}[f^k(L)]$ converges to $T^-$. 
As described earlier, we choose a generic subdivision $\widetilde\qq$ by cubes and extract from  
$S^-_k:= \lambda_1^{-k}[f^k(L)]$ a (uniquely marked) uniformly woven current $S^-_{k,\widetilde\qq}$ 
which is the restriction of $S_k$ to disks (graphs over one projection in $\qq$) of area 
not greater than $1/2$. 
% In what follows, we will slightly reduce the
% cubes of the subdivision and remove some more of the disks in 
% $S^-_{k,\widetilde\qq}$ to get (possibly extracting further subsequences) 
% the desired $\qq$ and $S^-_\qq$. 
% 
% \medskip

Let $\e$ be a small positive number. Fix the subdivision $\widetilde{\qq}$ as
above,  together with a corresponding family $\qq$ of slightly smaller concentric cubes, 
% in the following way. The cubes of $\qq$ should be 
homothetic to those of $\widetilde{\qq}$ by a factor $(1-\delta)$.  Then for an appropriate choice of  $\widetilde\qq$, 
\begin{equation}\label{eq:bdry} 
\mu(\widetilde{\qq}\setminus \qq)\leq 2(1-(1-\delta)^4).
\end{equation}
See \cite[Lemma 4.5]{isect} for a proof.

Now remove from $S^-_{k,\widetilde\qq}$ all components not intersecting $\qq$, and denote by $S^-_{k,{\qq}}$ the remaining current. Observe that $S^-_{k,\widetilde{\qq}}\rest{\qq} = S^-_{k,{\qq}}\rest{\qq}$.
As expained in \S \ref{subs:markings}, the currents $S^-_{k,\qq}$ are marked by measures on $Z(\widetilde{\qq}, 1/2)$ 
whose masses are bounded uniformly in $k$.

For reasons that will become clear in Theorem \ref{thm:tautological}
 below (roughly speaking, to get some invariance for the web supporting $T^-$),
we do some further averaging, by setting $T^-_k = \frac2k \sum_{i=\lfloor k/2\rfloor}^k S^-_i$, where $\lfloor \cdot \rfloor$
denotes the integer part function.
Observe that $T^-_k \cv T^-$ as $k\cv\infty$, and that 
restricting $T^-_k$ to the good components (as defined in \S\ref{subs:markings}) 
of $T_k^-$, we get
the uniformly woven current $T^-_{k,\qq} = \frac2k \sum_{i=\lfloor 
k/2\rfloor }^k S^-_{i,\qq}$. 

Since the transverse measures of $T^-_{k,\qq}$ have uniformly bounded mass, we may extract a strongly converging subsequence of $T^-_{k,{\qq}}$.  The limiting current $T^-_{\qq}$ is automatically uniformly woven (in each cube). 
We similarly construct currents $T^+_{{\qq}}$ associated to backward images of a generic line, though in this case, the extra averaging step, while harmless, is not necessary.

Recall that the $T^\pm_{\qq}$ are actually defined in the slightly bigger subdivision 
$\widetilde{\qq}$.  If the size of the  cubes is small enough, then
\begin{equation}\label{eq:size}
\m\left(T^+\wedge T^- - (T^+_{{\qq}}\geom T^-_{{\qq}})\right) 
\end{equation} will be small. 
So in the following we fix $\widetilde{\qq}\supset \qq$ so that the sum of the
error terms in 
\eqref{eq:bdry} and \eqref{eq:size}, together with the loss of mass 
coming from a neighborhood of the  set of  points where the two projections are
not  transverse fibrations is less than $\e/10$. 
For technical reasons, we further require that  $\mu(\fr\qq)=0$.

\vskip.2cm

By Lemma \ref{lem:nonatomic}, we can define the measures $\mu_k= T^+\wedge T^-_k$ and $T^+\wedge T^-_{k,
\qq}$ for every $k\geq 0$.  The mass of $\mu_k$ tends  to 1 for cohomological reasons.  We claim that in fact  $\mu_k\cv\mu$ as $k\cv\infty$.  Indeed, the wedge product of uniformly woven currents is geometric,  so 
from Lemma \ref{cor:lsc} we have for large $k$ (depending only on $\qq$, hence on $\e$) that 
\begin{equation}\label{eq:old}
\m\left( (T^+ \wedge T^-_{k, \qq})\rest{\qq} \right)\geq 
\m\left( (T^+_\qq \wedge T^-_{k, \qq})\rest{\qq}\right)
= \m\left( (T^+_\qq \geom T^-_{k, \qq})\rest{\qq}\right)\geq 1-\frac{2\e}{10}, 
\end{equation}
More generally, if $k_j$ is any sequence such that $T^-_{k_j, \qq}$ strongly
converges to some $T_{\qq}$, then 
$\lim T^+_\qq \geom T^-_{k_j, \qq} \geq  T^+_\qq \geom T^-_{\qq}$. 
So any cluster value of $\mu_k$ is larger than $T^+_\qq \geom T^-_{\qq}$, whence $\mu_k\cv\mu$. 

Denote by $\mu_{k,\qq}$ the measure $T^+_\qq \wedge T^-_{k,\qq} =T^+_\qq \geom
T^-_{k,\qq }$.

\medskip

\noindent{\bf Backward contraction for  $\mu_{k,\qq}$.}
Fix an integer $n\geq 1$.  Then for generic $L$ and any $k\geq n$, we have that $f^n: f^{k-n}(L)\cv f^k(L)$ is 1-1 outside some finite set.  So for every disk $D\hookrightarrow f^k(L)$, $f^n$ admits a unique (``distinguished'') 
inverse branch $f_{-n}:D\to f^{k-n}(L)$.  Since the area of $f^k(L)$ is no
greater than $C\lambda_1^k$ for some $C>0$, we have
$$
\# \set{ \text{ plaques } D \text{ of }
S^-_{k,\qq}, \text{ s.t. }\mathrm{Area} (f_{-n}(D))\geq
\frac{An^2}{\lambda_1^n}} \leq  \frac{C\lambda_1^k}
{An^2},
$$ 
and thus 
$$
\# \set{ \text{ plaques } D \text{ of }
S^-_{k,\qq}, \text{ s.t. } \exists n\leq k, \mathrm{Area} (f_{-n}(D))\geq
\frac{An^2}{\lambda_1^n}} \leq  \sum_{n=1}^k \frac{C\lambda_1^k}
{An^2} \leq \frac{C'\lambda_1^k} {A}
$$
Discarding the plaques in the latter set from $S^-_{k,\qq}$, we get a new uniformly woven
current, that we denote by 
 $S^-_{k,\qq}(A)$. In terms of the transverse measure, we have removed at
most  ${C'\lambda_1^k}/{A}$ 
Dirac masses\footnote{It is important here that the transverse
measure is a sum of Dirac masses on disks, not on arbitrary  chains.}.  Hence 
\begin{equation}\label{eq:A}
\m\left(m(S^-_{k,\qq}) - m(S^-_{k,\qq}(A)) \right)\leq \frac{C}{A}.
\end{equation}

We then form the current $T^-_{k,\qq}(A) = \frac2k \sum_{i=\lfloor k/2 \rfloor 
}^k S^-_{i,\qq}(A)$
(which also satisfies \eqref{eq:A}).  Extracting a strongly convergenct subsequence, we get a 
current $T^-_{\qq}(A)\leq T^-_{\qq}$. This family of currents increases to 
$T^-_{\qq}$ as $A$ increases to infinity (technically we need to take a sequence
of $A$'s and a diagonal extraction), hence by geometric intersection 
$T^+_\qq \wedge T^-_\qq(A)$ increases to  $T^+_\qq \wedge T^-_\qq$. We fix $A$
so that $M(T^+_\qq \wedge T^-_\qq(A))\geq 1-\e/2$. Relabeling, we now use $T^-_\qq$
to denote $T^-_\qq(A)$ and $T^-_{k,\qq}$ to denote $T^-_{k,\qq}(A)$.  We will show that the conclusions of
Proposition \ref{prop:positive} hold for this current $T^-_\qq$ and the current $T^+_\qq$ constructed above. 

If $n$ is fixed,  then  as soon as $k/2\geq n$, the ``distinguished'' 
preimage of each plaque of $T^-_{k,\qq}$ under $f^n$ has small area by construction. 
By lower semicontinuity  (Lemma \ref{cor:lsc}), we get that for large $k$, $\m(\mu_{k,\qq}) \geq 
1-\e/2$. 

\medskip

From the area-diameter estimate of \cite{briend-duval} and the Cauchy estimates, 
we see that the modulus of the  derivative of $f_{-n}$ along the plaques of
$T^-_{k,\qq}$ is small  in $\qq$. To be more specific, if $Q\in \qq$ and $D$ is
a plaque of $T^-_{k,\qq}$ in $\widetilde\qq$, the modulus of the derivative of
$f_{-n}$ along $D\cap Q$
 will be  controlled by $C(\e)n/\lambda_1^{n/2}$. 
 Indeed, recall that $D$ has area $\leq 1/2$ so by a simple compactness
argument, the moduli of annuli surrounding the connected components of $D\cap Q$
in $D$ are 
 bounded from below by a constant depending only on the geometry of $Q$ and
$\widetilde{Q}$, hence ultimately on $\e$. The estimate on the derivative then
follows from the original estimate of Briend and Duval.

\medskip

\noindent{\bf Distinguished inverse branches.}
For each $Q\in \qq$ and each plaque $D$ of $T^-_Q$ in $Q$, $D$ is the Hausdorff
 limit of a sequence of disks $D_k$ with the 
property that for every $1\leq n \leq k/2$, $f^{n}$ admits a {\em natural}
inverse branch $f_{-n}$ 
over $D_k$, with a uniform control on the derivative along $D_k$, of the form 
\begin{equation}\label{eq:derivative}
\abs{d(f_{-n}\rest{D_k})}\leq C(\e)n/\lambda_1^{n/2}.
\end{equation}
%Let $x_0\in \supp(T_\qq^+\wedge T^-_\qq)$. 

As already explained (see the examples in \S \ref{sec:idea}) it will be
important to isolate a set of 
``meaningful'' histories in the natural extension. Here is the crucial
definition.

\begin{defi}\label{def:dist}
Let $D$ be a disk subordinate to $T^-_\qq$.  We call an inverse branch $f_{-n}$ of $f^n$ along $D$ \emph{distinguished}
if there exists a sequence of disks $D_k$ subordinate to $T^-_{k,\qq}$ converging to $D$ such that the natural branches
$f_{-n}\rest{D_k}$ converge normally to $f_{-n}\rest{D}$.

We say that a history $(x_{-j})_{0\leq j\leq n}$ of length $n$ is \emph{$\qq$-distinguished} if $x_0\in \supp(T_\qq^+\geom T^-_\qq)$ and $x_{-n} = f_{-n}(x_0)$ for some distinguished inverse branch $f_{-n}$.
An infinite history $(x_{-i})_{i\geq 0}$ is $\qq$-distinguished if all its subhistories of length $n$ are $\qq$-distinguished.
\end{defi}
% \begin{defi}\label{def:dist}
% A piece of orbit $(x_i)_{0\leq i\leq \ell}$ is $\qq$-distinguished
% if for every $0\leq i\leq \ell$, $x_{i}$ is a cluster value of $x_{i}^{(k)}$,
% with $x_{i}^{(k)} = 
% f_{\ell-i}(x_\ell^{(k)})$ and $\supp(T_{k,\qq}^-)\ni x_\ell^{(k)}\cv x_\ell$.
% A branch is distinguished if it is {$\qq$-distinguished} for some $\qq$.
% 
% A history $(x_{-n})_{n\geq 0}$ is (resp. $\qq$-)distinguished if for every 
% fixed $N$, the orbit $(x_{-n})_{0\leq n\leq N}$ is (resp. $\qq$-)distinguished.
% \end{defi}

When there is no danger of confusion we will omit the `$\qq$' in $\qq$-distinguished; more generally, ``distinguished'' will stand for ``$\qq$-distinguished for some $\qq$''.  Taking normal limits of  the $f_{-n}\rest{D_k}$, we see that $f^n$ admits distinguished inverse branches on every disk suboordinate to $T^-_\qq$.
By diagonal extraction, we further find that every $x_0\in \supp(T_\qq^+\geom T^-_\qq)$ admits a distinguished (full) history. Moreover, attached to every $\qq$-distinguished history of $x_0$, there is a disk $D\ni x_0$ subordinate to $T^-_\qq$ and a sequence of distinguished inverse branches $f_{-n}\rest{D}$, with $f_{-n}(x_0) = x_{-n}$, compatible in the sense that $f\circ f_{-n-1}=f_{-n}$.
 Since the estimate in \eqref{eq:derivative} is uniform in $k$ we have the estimate
$\abs{d(f_{-n}\rest{D})}\leq C(\e)n/\lambda_1^{n/2}$. This proves items
{\em iii.} and {\em iv.} of Proposition \ref{prop:positive}.

\medskip

It remains to show that distinguished histories are overwhelming in the natural extension of $\mu$. 
Let $\xcd_\qq \subset \hat{X}$ be the set of distinguished histories $(x_j)$ of points $x_0\in\supp(\mu_\qq)$; likewise, let $\hat{X}^{\mathrm{dist}}$ be the increasing union of $\xcd_\qq$ as the diameter of the cubes in $\qq$ goes to zero. 
We prove that $\hat{\mu}  (\xcd)=1$ by proving that $\hat{\mu}  (\xcd_\qq)\geq 1-\e$. 

Let $\xcd_{-N, \qq}\subset \hat X$ consist of histories of points $x_0\in\supp\mu$ that are distinguished up to time $-N$. This is a decreasing sequence of subsets of $\hat{X}$, and $\bigcap_{N\geq 1} \xcd_{-N, \qq} = \xcd_\qq$.  It is enough to prove that $\hat{\mu}  (\xcd_ {-N, \qq})\geq 1-\e$ for all $N\geq 1$.  Now, $\hat{\mu}  (\xcd_ {-N, \qq})= \hat{\mu} \big(\hat{f}^N(\xcd_ {-N, \qq})\big)$
 and by definition 
$\hat{f}^N(\xcd_ {-N, \qq})$ is the set of sequences $(x_n)$ such that $x_0$ is
a cluster value of a sequence 
$f_{-N}(x_N^{(k)})$, with $\supp(S_{k,\qq}^-)\ni x_N^{(k)} \cv x_N$. 

Recall that the measure $\mu_{k,\qq}$ has mass larger than $1-\e$,
and satisfies
\begin{equation}\label{eq:muqk}
\mu_{k,\qq}:= T^+_\qq\wedge T^-_{k,\qq} \leq T^+ \wedge  T^-_{k,\qq}= \frac2k
\sum_{i=\lfloor k/2\rfloor }^k T^+ \wedge S^-_{i,\qq} . 
\end{equation}
We come now to the crucial point.  Since there is a natural $f_{-N}$ on $f^k(L)$ for $k>2N$, we may consider the measure $(f_{-N})_* \mu_{k,\qq}$, which has mass larger than $1-\e$. From \eqref{eq:muqk} we infer that 
$(f_{-N})_* \mu_{k,\qq} \leq T^+\wedge T^-_{k-N} + \sigma_k$, where $\sigma_k$ is a signed measure of total mass $O(\frac{N}{k})$. Consider any cluster value of this sequence of measures as $k\cv\infty$ and
denote it by $(f_{-N})_* \mu_{\qq}$ (this notation is convenient but somewhat improper). Then 
$(f_{-N})_* \mu_{\qq}\leq \mu$ because $\mu_k\cv\mu$ and $\m((f_{-N})_* \mu_{\qq}) \geq 1-\e$. Now if $(x_n)\in \hat X$ is {\em any} sequence such that $x_0 \in \supp((f_{-N})_* \mu_{\qq})$, then by definition $(x_n)\in 
\big(\hat{f}^N(\xcd_ {-N, \qq})\big)$ (note that the tail $(x_j)_{j<-N}$ of a history in 
$\xcd_ {-N, \qq}$ is arbitrary). Hence 
$$\hat\mu\left( 
\hat{f}^N(\xcd_ {-N, \qq})\right) \geq 
\hat\mu\left( \hat{\pi}_0^{-1} \left(\supp((f_{-N})_* \mu_{\qq}\right) \right) 
= \mu \left(\supp((f_{-N})_* \mu_{\qq}) \right) \geq 1-\e.$$
This finishes the proof of the proposition.
 \end{proof}

\begin{rqe}\label{rmk:laminarity}
\begin{enumerate}[i.]
\item From the first part of the proof we could directly deduce the existence of
the positive exponent (expansion in forward time), in the spirit of 
\cite{dt-saddle}. However, to obtain more complete results, we first construct the  
tautological extension. 
% In particular we get a well defined expanded vector at a.e. point.
\item The laminarity of $T^+$ is only used in the proof to get lower semicontinuity properties of the wedge products. So if, for instance, $T^+$ has continuous potential, we can drop the laminarity assumption and get the same conclusion.
Notice additionally in this case that 
%when $T^+$ has continuous potential, and $T^-$ is a strongly approximable woven
the wedge product $T^+\wedge T^-$ is ``semi-geometric'' in the sense that it is approximated from
below by $T^+\wedge T^-_\qq$ (see the proof of \cite[Remarque  4.6]{isect} or 
\cite[Remark 5.3]{birat}).
%when the size of the  cubes decreases to zero. 
This is the setting of \cite{dt-saddle}, and might be useful for further applications.
\end{enumerate}
\end{rqe}

\section{The tautological extension}\label{sec:tautological}

So far we have constructed a family of marked uniformly woven currents $T_\qq^-= \sum T_Q^-$, increasing to $T^-$, 
with the property that for any disk appearing in the markings, $f^n$ admits exponentially contracting inverse branches. We say that such disks are {\em subordinate} to $T^-$. 

If $D$ is a disk subordinate to $T^-$, we define the measure $T^+\geom D$ as the increasing limit of the measures $T^+_{\qq_i}\geom D$ for our choice of increasing  subdivisions $\qq_i$.  Since $T^+\wedge T^-$ is a geometric 
 intersection, 
 if $D$ is a generic (relative to the marking) disk subordinate to $T^-$, then 
 $T^-\wedge [D]$ is well defined and  $T^+\wedge  [D] = T^+\geom D$.

\medskip

We now construct the tautological extension $(\check{X}, \check\mu, \check f)$ of $(X,\mu,f)$. This is roughly speaking 
the ``smallest'' space\footnote{This space does not depend canonically on $(X,\mu, f)$. On the other hand it 
depends canonically on $\mu$ viewed as a geometric intersection of marked woven currents.} where the unstable leaves become separated.  It will be realized as a disjoint union of flow boxes, foliated\footnote{We make no attempt to connect the flow boxes, as this would certainly lead to unwelcome topological complications.} 
by the lifts of the disks subordinate to $T^-$ (which play the role of unstable manifolds).    The marking data give us a lift of $T^-$ to a ``laminar current'' on
$\check X$.  Hence we get a lift $\check\mu$ of $\mu$, whose conditionals
on unstable manifolds are well understood. This will be our main technical step towards the understanding the conditionals of $\hat\mu$ on unstable manifolds in the {\em natural} extension. As suggested by the referee, it might be possible to analyze the conditionals of $\hat\mu$ directly from $\mu$ but we don't know how to do it.

\begin{thm}\label{thm:tautological}
 There exists a locally precompact and separable  space $\check{X}$, which is a
countable union of compatible flow boxes, 
together with a Borel probability measure $\check{\mu}$ and
a measure preserving map $\check f$, with the following properties.
\begin{enumerate}[i.]
\item There exists a projection $\check\pi: (\check{X}, \check\mu) \cv (X,\mu)$
 semiconjugating $f$ and $\check f$, i.e. $\check\pi\circ \check f = f\circ
\check \pi$.
\item $\check{X}$ admits a measurable partition $\check D$ whose atoms 
 $\check D(\check x)$, $\check x\in\check X$, are mapped homeomorphically
by $\check\pi$ to onto disks subordinate to $T^-$.
\item The conditional measure of $\check\mu$ on almost any atom $\check D(\check x)$ is induced by the current $T^+$ as follows: 
it is equal up to normalization to $\big((\check\pi\rest{D(\check x)})^{-1}\big)_*
(T^+\geom D)$
where $D=\check\pi (\check D(\check x))$.
\end{enumerate}
\end{thm}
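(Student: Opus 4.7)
The strategy is to construct $\check X$ as a countable disjoint union of tautological bundles over the marked uniformly woven currents $T^-_Q$ from Proposition \ref{prop:positive}, equip it with a measure $\check\mu$ obtained by disintegrating $\mu = T^+ \wedge T^-$ along the disks of $T^-$, and introduce the dynamics $\check f$ as the map induced by $f$ acting on those disks. The averaging $T^-_k = \frac{2}{k}\sum_{i=\lfloor k/2\rfloor}^k S^-_i$ built into the proof of Proposition \ref{prop:positive} plays an essential role: it is what promotes the current-level invariance $f_* T^- = \lambda_1 T^-$ to invariance of the marking measure under push-forward by $f$, which in turn is what makes $\check f$ preserve $\check\mu$.

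\emph{Construction of $\check X$ and $\check\mu$.} Fix the increasing sequence of generic subdivisions $\qq_i$ from Proposition \ref{prop:positive} and, for each cube $Q$ appearing in some $\qq_i$, form the tautological bundle $\check T^-_Q = \{(D,p) : D \in \supp m(T^-_Q),\ p \in D\} \subset Z(\widetilde Q,1/2) \times X$. By Lemma \ref{lem:chain} we may discard an $m(T^-_Q)$-null set so that each chain in $\supp m(T^-_Q)$ is a single disk, and Lemma \ref{lem:graph} ensures that the first-coordinate projection endows $\check T^-_Q$ with a lamination structure with no folding. Set $\check X = \bigsqcup_Q \check T^-_Q$, let $\check\pi$ be the second-coordinate projection, and let $\check D$ be the partition into plaques. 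Then $\check\pi$ sends plaques homeomorphically onto disks subordinate to $T^-$, giving (ii). Next, for each $Q$ define
$$\check\mu_Q = \int_{\supp m(T^-_Q)} (\iota_D)_*\bigl(T^+ \wedge [D]\bigr)\, dm(T^-_Q)(D),$$
where $\iota_D : D \hookrightarrow \check T^-_Q$ is the tautological inclusion; by the convention from \S\ref{subs:geom}, $T^+ \wedge [D] = T^+ \geom D$ is well-defined for $m(T^-_Q)$-a.e.\ $D$ (Proposition \ref{prop:L1}) and atomless (Lemma \ref{lem:nonatomic}). An appropriate combination of the $\check\mu_Q$ (using $\m(T^+_{\qq_i} \geom T^-_{\qq_i}) \nearrow 1$) produces a Borel probability measure $\check\mu$ with $\check\pi_*\check\mu = \mu$; disintegrating $\check\mu$ along $\check D$ via Lemma \ref{lem:project} then yields (iii).

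\emph{The dynamics $\check f$ and (i).} Define $\check f(D,p) = (D', f(p))$, where $D'$ is the unique disk in the marking of some $T^-_{Q'}$ whose germ at $f(p)$ coincides with that of $f(D)$. The semiconjugacy $\check\pi \circ \check f = f \circ \check\pi$ is immediate. The identity $f^* T^+ = \lambda_1 T^+$ gives, by duality, $f_*(T^+ \wedge [D]) = \lambda_1^{-1}(T^+ \wedge [f(D)])$ for every disk $D$ avoiding the critical and indeterminacy sets, so that
$$\check f_* \check\mu = \lambda_1^{-1} \int (\iota_{D'})_*\bigl(T^+ \wedge [D']\bigr)\, d(f_* m(T^-_\qq))(D').$$
Invariance $\check f_*\check\mu = \check\mu$ therefore reduces to the marking-level identity $f_* m(T^-_\qq) = \lambda_1 m(T^-_\qq)$. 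This in turn follows from the telescoping identity
$$f_* T^-_k = \lambda_1 \cdot \tfrac{2}{k} \sum_{j=\lfloor k/2\rfloor+1}^{k+1} S^-_j = \lambda_1 T^-_k + \tfrac{2\lambda_1}{k}\bigl(S^-_{k+1} - S^-_{\lfloor k/2 \rfloor}\bigr),$$
upgraded to the marking measures by restricting to good components and passing to the strong limit $T^-_\qq$; the correction term vanishes as $k \to \infty$.

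The main obstacle lies in this last step: promoting the current-level invariance $f_* T^- = \lambda_1 T^-$ to marking-level invariance $f_* m(T^-_\qq) = \lambda_1 m(T^-_\qq)$. The difficulty has two sources: (a) a disk $D$ in the web of $T^-_\qq$ may cross a cube boundary under one iteration of $f$, producing several image pieces, and (b) $f(D)$ may cease to be a graph of area $\leq 1/2$ over either projection, i.e.\ fail to be a good component in the sense of \S\ref{subs:markings}. Both are controlled by choosing the subdivisions $\qq_i$ generically (so $\mu(\partial \qq_i) = 0$, cf.\ \eqref{eq:bdry}) and by the averaging, which absorbs the boundary losses into the vanishing correction term above; but verifying this carefully, together with a measurability argument for the assignment $(D,p)\mapsto D'$, is the real content of the proof.
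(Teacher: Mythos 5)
Your overall skeleton is the same as the paper's: the tautological bundle over the markings, fiberwise measures $T^+\geom \check D$ integrated against $m(T^-_Q)$, and $\check f$ defined by the action of $f$ on germs of subordinate disks. But the step you yourself identify as the crux is where the argument breaks: the exact marking-level identity $f_* m(T^-_\qq) = \lambda_1\, m(T^-_\qq)$ for a \emph{fixed} subdivision $\qq$ is not true, and the averaging does not rescue it. The telescoping identity controls only the term $\tfrac{2\lambda_1}{k}(S^-_{k+1}-S^-_{\lfloor k/2\rfloor})$, whose mass is $O(1/k)$; by contrast, the losses you list under (a) and (b) --- images of good disks crossing cube boundaries or ceasing to be graphs of area $\leq 1/2$ --- are losses relative to the fixed scale $\qq$ and do \emph{not} tend to zero as $k\cv\infty$. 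They are of size $\e(\qq)$, vanishing only as the subdivision is refined. Since the web supporting $T^-_\qq$ is not $f$-invariant (images of good components can be bad, and conversely), neither $m(T^-_\qq)$ nor the single-scale measure $\check\mu_\qq$ can be exactly invariant, so your route --- exact invariance at scale $\qq$, then pass to the limit --- cannot be carried out.

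What the paper does instead is work with approximate invariance at each scale and exact invariance only for the limit object. The averaging is used to make $f_*\mu_k-\mu_k$ small in mass (of order $1/k$); one then introduces the sets $A_{k,\qq_i}=\supp T^-_{k,\qq_i}\cap f^{-1}(\supp T^-_{k,\qq_i})$ of points going from good disks to good disks, shows via geometric intersection and lower semicontinuity (Lemma \ref{cor:lsc}) that any cluster value of $\check\mu_{k,\qq_i}$ differs from $\check\mu_{\qq_i}$ by mass $\e(\qq_i)$ (Lemma \ref{lem:cluster}), and defines $\check f_i$ only on $\check A_{\qq_i}=\limsup_k A_{k,\qq_i}$, where the image germ $D'$ exists as a cluster value of disks subordinate to $T^-_{k,\qq_i}$. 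This yields $\m\big((\check f_i)_*(\check\mu_{\qq_i}\rest{\check A_{\qq_i}})-\check\mu_{\qq_i}\big)=\e(\qq_i)$, and exact invariance of $\check\mu$ follows only after letting $i\cv\infty$, using the compatibility $\check f_j\rest{\check A_{\qq_i}}=\check f_i$ obtained by extracting subsequences coherently across subdivisions. A secondary gloss in your write-up: taking $\check X=\bigsqcup_Q \check T^-_Q$ over all cubes of all subdivisions double-counts disks appearing at several scales; the paper's sets $E_i$ (new, boundary-transverse disks not contained in chains of earlier scales), together with the inclusions $\check D\hookrightarrow\check D'$ for $D\subset D'$, are precisely what makes the increasing limit defining $\check\mu$ legitimate. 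Your formula $f_*(T^+\wedge[D])=\lambda_1^{-1}\,T^+\wedge[f(D)]$ away from the critical and indeterminacy sets is correct and is indeed used in the paper, but later (for the unstable conditionals), not as a substitute for the approximation scheme above.
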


\begin{proof}
As before, let $\qq$ be one among a fixed increasing sequence $(\qq_i)$ of subdivisions
with $\mu(\fr\qq_i)=0$.  The current $T_\qq$ is marked by a measure on the disjoint union $\coprod_{\tilde Q\in \tilde \qq} Z(\tilde Q,1/2)$.  To simplify notation, we will omit the $\,\tilde{}\,$ and the $1/2$ in the sequel.  Recall from \S \ref{sec:geom} that points in the tautological extension $\check Z(Q)$ of $Z(Q)$ are pairs $(x,D)$ with $x\in D\in Z(Q)$, and that the projection $\check\pi:\check Z(Q) \to Z(Q)$ is given by $\check\pi(x,D) = x$.

The principle of the proof is quite simple.  Each $D \in \supp m(T_Q^-)$ admits a natural lift $\check D$
to $\check Z(Q)$, and $T^+$ induces a measure on $\check D$ according to the formula
$$
T^+\geom \check D := \big((\check\pi\rest{\check D})^{-1}\big)_*(T^+\geom [D]),
$$ 
Averaging with respect to the markings then gives a measure
$$
\check\mu_Q := \int (T^+\geom \check D) \ d(m(T_Q^-)) (D)
$$ 
that projects on $\mu_Q$. 
We call $\check\mu_Q$ the \textit{tautological extension} of $T^+\geom T_Q^-$. 
What remains is to make sense of the ``increasing limit'' $\check\mu$ of $\sum_{Q\in\qq_i} \mu_Q$ as $i\to\infty$.  In particular, we need to construct the space $\check X$ that carries $\check\mu$, and then show that $\check\mu$ is invariant under some naturally associated map $\check f$.
% 
% In particular, we want to define the support $\check X$ of the limit as the disjoint union of all the tautological bundles of $\supp (m(T_{\qq_i}^-))$, as $i\cv\infty$, 
% modulo the equivalence relation $(x,D)\sim (x,D')$ if
% $\mathrm{germ}_x(D)=\mathrm{germ}_x(D')$
% and prove that the  resulting measure $\check\mu$ is invariant under the natural
% $\check f$. 

\medskip
 
To construct $\check X$ we recall from Lemma \ref{lem:chain} that for generic subdivisions, almost all
chains appearing in the markings $m(T^-_\qq)$ are disks transverse to the boundary (this is an open subset of chains).  Let $O_1\subset\supp(m(T_{\qq_1}^-))$ be the relatively open, full measure subset of boundary-transverse disks. Set
$E_1 = O_1$  and let $\check E_1$ be the tautological bundle over $E_1$. 

Now assume that for $1\leq j \leq i-1$ the sets $E_j$ and $\check E_j$ have been
constructed. Consider the marked current $T_{\qq_i}^-$, and as before denote by $O_i$ the open subset of $\supp(m(T_{\qq_i}^-))$ made of boundary-transverse disks. We add to $\bigcup_{j\leq i-1} E_j$ the smaller set $E_i \subset O_i$ consisting only of those disks which have not previously appeared: i.e.
$$
E_i = O_i \setminus \set{D : D\subset Z \in \supp(m(T_{\qq_j}^-))}.
$$ 
As is easily verified, $E_i$ is open in $O_i$ and therefore also in the locally compact metric space
$\coprod_{Q\in \qq_i} Z(Q)$. 
\medskip

Now take $\check X$ to be the disjoint union  $\check X = \coprod_i \check E_i$. 
By Lemma \ref{lem:graph}, there is no folding in $m(T_{\qq_j}^-)$ so each
 $\check E_i$ is laminated by the disks of $E_i$. We can endow $\check X$ with a
natural topology 
by putting the natural topology on each $\check E_j$, and declaring that each
$\check E_j$ is open and closed. 
This topology is even induced by a metric where each of the $\check E_i$ is
bounded, and 
at definite (positive) distance from the others.
This makes $\check X$ a locally precompact and separable space. 
We could also take its completion to get a locally compact and separable space
but we will not need it. 
Each  $E_i$ is naturally partitioned by the disks of $E_i$, giving rise to the
measurable partition $\check D$ of {\em ii.}, and 
can be covered by a countable family of compatible flow boxes.

To define the measure $\check\mu$, we note that if $D\subset D'$ with $D\in Z(\qq)$ (resp $D'\in Z(\qq')$), then there is a natural inclusion $\check D \hookrightarrow \check D'$. Hence we can view the tautological extension of $T^+\geom T_{\qq_i}^-$ as a measure $\check\mu_{\qq_i}$ supported on $E_1\cup\cdots \cup E_i$ (rather than $Z(\qq_i)$). This defines an increasing sequence of measures\footnote{Note that $\check\mu_{\qq_i}$ might exceed $\check\mu_{\qq_{i-i}}$ not only on $E_i$ but also on those earlier sets $E_j$ that contain disks in $O_i$.} in $\check X$.  The limit 
$\check\mu$ is a Borel probability measure.
The conditional of the measure $\check\mu_{\qq_i}$ on a disk $D$ of $\check E_j$, is
by definition induced by $T^+$ (and independent of $i\geq j$), so statement {\em iii.} follows.

\medskip

Finally, we seek to construct the measurable map $\check f$ projecting onto $f$ and
leaving $\check\mu$ invariant.  We want to define $\check f$ as follows: $\check f (x,D) = (x',D')$ if $f(x)=
x'$ and $\mathrm{germ}_{x'}(f(D)) = \mathrm{germ}_{x'}(D')$.  The details of this are a little involved, 
however.

\medskip

To begin with, let us recall some notation from Proposition \ref{prop:positive}.  In the course of proving this result, we introduced currents $T_k^- = \frac2k \sum_{j=k/2}^k \unsur{\lambda_1^j}S_j$ together with the restriction $T^-_{k,\qq}$ of $T_k^-$ to disks which were `good' relative to $\qq$.  We also had measures $\mu_k = T^+\wedge T^-_k$ and $\mu_{k,\qq}= T^+\wedge T^-_{k,\qq}$ (note the slight change in the definition of the latter).

It is immediate from the definitions that the difference $\sigma_k := f_*\mu_k - \mu_k$ has mass no greater than $4/k$.  Another useful observation is that, for purposes of comparing the various measures we have defined, we can be a little flexible concerning their domains.  Since $\mu_k$ gives no mass to points (Lemma \ref{lem:nonatomic}) and $\mu_{k,\qq}$ is concentrated on countably many disks of $Z(\qq)$ with discrete intersections, we can lift $\mu_{k,\qq}$ canonically to a measure $\check \mu_{k,\qq}$ on $\check Z(\qq)$. So we may regard $\mu_{k, \qq}$ as a measure on the cubes of $\qq$ or alternatively as a measure on the the tautological bundle $\check Z(\qq)$.  In a similar vein,  we may regard $\check\mu\rest{\check E_1\cup \cdots\cup\check E_i}$ as a measure on $\check Z(\qq_i)$ rather than $\check X$.  
\medskip

\begin{lem}\label{lem:cluster} 
 Let $\check\nu_{\qq_i}$ be any cluster value of the sequence of measures
$\check\mu_{k,\qq_i}$. 
Then $\check\nu_{\qq_i} - \check\mu_{\qq_i}$ is a signed measure with 
$$\m\left( \check\nu_{\qq_i} - \check\mu_{\qq_i} \right) =\e(\qq_i)$$ 
where $\e(\qq_i)$ depends only on $i$ and tends to zero as $i\cv\infty$.
\end{lem}

\begin{proof}[Proof of Lemma \ref{lem:cluster}] 
Since $T^+\geq T^+_{\qq_i}$, we have
$$
\check \mu_{k,\qq_i}\geq \int (T^+_{\qq_i}\geom \check D) \ d(m(T_{k,\qq_i}^-)) (D).
$$ 
Taking cluster values on both sides, and using  lower semicontinuity  (Lemma \ref{cor:lsc}) we infer that 
$$
\check\nu_{\qq_i} \geq \int (T^+_{\qq_i}\geom \check D) \ d(m(T_{\qq_i}^-))
(D).
$$
On the other hand,
$$
\check\mu_{\qq_i} = \int (T^+\geom \check D) \ d(m(T_{\qq_i}^-)) (D) \geq \int (T^+_{\qq_i}\geom \check D) \ d(m(T_{\qq_i}^-))(D).
$$
Hence $\check\mu_{\qq_i},\check\nu_{\qq_i}$ are both measures with at most unit mass, and both are bounded below by a measure which, by geometric intersection, has mass at least $1-\e(\qq_i)$. 
\end{proof}

Continuing with the proof of the theorem, we let $A_{k,\qq_i} = \supp T^-_{k,\qq_i} \cap f^{-1}(\supp T^-_{k,\qq_i})$. That is, $A_{k,\qq_i}$ consists of points that `go from large disks to large disks.' Since 
$f_*\mu_k = \mu_k + \sigma_k$ and $f$ is essentially 1-1 on $\supp T^-_{k}$, we
have that $A_{k,\qq_i}$ has almost full mass:
$$
\mu_k(A_{k,\qq_i}) \geq \mu_k(\supp T^-_{k,\qq_i})- \mu_k (\supp T^-_{k} - \supp T^-_{k,\qq_i}) - \frac4k.
$$
By geometric intersection, the second term on the right hand side is of the form $\e(k,\qq_i)$ with $\lim_{k\cv\infty}\e(k,\qq_i) = \e(\qq_i)$ and $\e(\qq_i)\cv 0$ as $i\cv\infty$.  So since $\mu_{k,\qq_i} = \mu_k|_{\supp T_{k,\qq_i}}$, we infer that
\begin{equation}\label{eq:muqui}
 \m\left(\mu_{k, \qq_i} - \mu_{k,\qq_i}\rest{A_{k,\qq_i}}\right) \leq
\m\left(\mu_{k} - \mu_{k,\qq_i}\rest{A_{k,\qq_i}}\right) \leq \e(k,\qq_i) +
\frac4k.
\end{equation}
In the same way we obtain that 
\begin{equation}\label{eq:fetoile}
\m\left(\mu_{k, \qq_i} - f_*\mu_{k,\qq_i}\rest{A_{k,\qq_i}}\right)\leq
\e(k,\qq_i) + O(\frac1k).
\end{equation}

\medskip

Lifting to the tautological bundle, we get a set $\check A_{k,\qq_i}$. Consider a cluster value $\nu_{A}$ of the sequence of measures $\check\mu_{k,\qq_i}\rest{\check A_{k,\qq_i}}$. From Lemma \ref{lem:cluster} and
\eqref{eq:muqui}, and since there is no loss of mass in the boundary, we get
that 
$$
\m\left( \nu_A - \check\mu_{\qq_i} \right)\leq \m\left( \nu_A -
\check\nu_{\qq_i} \right) + 
\m\left( \check\nu_{\qq_i} - \check\mu_{\qq_i} \right)
 =\e(\qq_i),
$$ 
where $\check\nu_{\qq_i}$ is a cluster value of the sequence of measures $\check\mu_{k,\qq_i}$.

Now set $\check A_{\qq_i} :=\limsup_{k\to\infty} A_{k,\qq_i}$.  Since this contains $\supp( \nu_A)$; we
infer that $\check\mu_{\qq_i}(\check A_{\qq_i})\geq 1-\e(\qq_i)$.  We claim moreover that there is a well-defined 
map $\check f_i:\check A_{\qq_i} \to Z(\qq_i)$ given by $f_i(x,D) = (f(x),D')$ where $D'\in Z(\qq_i)$ coincides with
$f(D)$ near $f(x)$.  To see that this works, observe that by definition of $\check A_{\qq_i}$, there is a sequence
of pointed disks $(x_k,D_k) \to (x,D)$ such that $D_k$ is subordinate to $T^+_{k,\qq_i}$ and $f(D_k)$ coincides near
$f(x_k)$ with some other disk $D'_k$ subordinate to $T^+_{k,\qq_i}$.  Taking $D'$ to be a cluster value of the
$(D'_k)_{k\in\N}$, we see that $\check f_i$ is indeed well-defined.
Lemma \ref{lem:cluster} and \eqref{eq:fetoile} tell us additionally that 
\begin{equation}\label{eq:fetoilebis}
 \m\left((\check f_i)_*(\check\mu_{\qq_i}\rest{\check A_{\qq_i}}) -
\check\mu_{\qq_i} \right) = \e(\qq_i).
\end{equation}

Rephrasing the preceding construction in terms of $\check X$, we have constructed a set $\check A_{\qq_i}\subset \check E_1\cup\cdots\cup\check E_i$, with $\check\mu$-mass $\geq 1-\e(\qq_i)$, together with a map $\check f_i:A_{\qq_i} \to \check E_1\cup\cdots\cup\check E_i$, that coincides with the action of $f$ on the space of germs.  If, when refining the subdivision, we are careful to extract our subsequences from those chosen for earlier subdivisons, then we
will obtain an increasing sequence of subsets $(\check A_{\qq_i})$, with the compatibility (say $\qq_j$ is finer than $\qq_i$) $\check f_j\rest{\check A_{\qq_i}} = \check f_i$. So the maps $\check f_i$ piece together to form a single map $\check f$ defined on a full measure subset of $\check X$.  Furthermore, since 
$$
\m\left ((\check f_i)_*\check\mu\rest{\check A_{\qq_i}} - \check\mu\rest{\check E_1\cup\cdots\cup\check E_i}  \right) = \e(\qq_i)
$$
we infer that $\check\mu$ is $\check f$-invariant.
\end{proof}

The following proposition clarifies the relationship between iteration on $\check X$
and the construction of the previous section.
 We say that a disk has size $\geq r$ if it belongs to
$Z(Q)$ for some	 cube $Q$  of size $\geq r$.
 
 \begin{pro}\label{lem:technical}
 For every fixed positive integer $\ell$, there exists a set $\check
A_\e(\ell)\subset \check X$ of $\check\mu$-measure $\geq 1-\e$ such that if 
$\check x = (x,D)\in\check  A_\e(\ell)$ then
 $\check f^\ell(\check x)$ has the following properties:
 \begin{enumerate}[i.]
 \item $f^\ell(D)$ coincides near $f^\ell(x)$ with a disk of size $\geq r(\e)>0$;
 \item $f^\ell:D\cv f^\ell(D)$ is univalent, 
 with derivative larger than $C(\e)\lambda_{1}^{\ell/2}\ell^{-1}$;
 \item the orbit segment $x, f(x), \ldots, f^\ell(x)$ is distinguished.
 \end{enumerate}
 \end{pro}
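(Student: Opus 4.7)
The plan is to assemble $\check A_\e(\ell)$ as the intersection of two subsets of $\check X$, one enforcing the size lower bound in (i), the other enforcing the distinguished-inverse-branch structure that underlies both (ii) and (iii). Both ingredients are already latent in Proposition \ref{prop:positive} and in the construction of $\check f$ in the proof of Theorem \ref{thm:tautological}; the substance of the argument is bookkeeping to ensure the constants depend only on $\e$ and not on $\ell$.

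For (i) I would invoke countable additivity: $\check\mu$ is concentrated on the disjoint union $\coprod_j \check E_j$, so for $i_0 = i_0(\e)$ sufficiently large, $\bigcup_{j\leq i_0}\check E_j$ carries $\check\mu$-mass at least $1-\e/2$. The cubes of $\qq_1,\dots,\qq_{i_0}$ have side length bounded below by some $r(\e)>0$, so any $(y,D')$ in this union has $D'$ of size $\geq r(\e)$. Setting $\check A_1:=\check f^{-\ell}\bigl(\bigcup_{j\leq i_0}\check E_j\bigr)$ and using $\check f$-invariance of $\check\mu$ yields a set of measure $\geq 1-\e/2$ on which (i) is automatic from the definition of $\check f$.

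For (iii) I would unwind the construction of $\check f^\ell$: for $\check\mu$-a.e. $\check x = (x,D)$, iteration produces $\check f^\ell(\check x) = (f^\ell(x),D^{(\ell)})$, where $D^{(\ell)}$ is a cluster value of disks $D^{(\ell)}_k$ coinciding near $f^\ell(x_k)$ with $f^\ell(D_k)$ for some sequence of pointed disks $(x_k,D_k)\to(x,D)$ subordinate to $T^-_{k,\qq}$. On each $D^{(\ell)}_k$ the map $f^\ell$ admits a natural holomorphic inverse branch sending $f^\ell(x_k)\mapsto x_k$; a normal-limit argument (possible because of the area control along the $D^{(\ell)}_k$) then yields a distinguished inverse branch $f_{-\ell}$ of $f^\ell$ on $D^{(\ell)}$ mapping $f^\ell(x)$ back to $x$. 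By Definition \ref{def:dist} this is exactly the statement that $x,f(x),\dots,f^\ell(x)$ is distinguished; call the corresponding full-measure set $\check A_2$, and take $\check A_\e(\ell) := \check A_1\cap \check A_2$.

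Item (ii) will then come for free on $\check A_\e(\ell)$. Proposition \ref{prop:positive}(iii) applied to the distinguished inverse branch from the previous step bounds $\bigl|d(f_{-\ell}\rest{D^{(\ell)}})\bigr|$ by $C(\e)\,\ell\,\lambda_1^{-\ell/2}$, and the existence of this single-valued inverse branch on $D^{(\ell)}$ (whose size is $\geq r(\e)$) forces $f^\ell$ to be univalent on a neighborhood of $x$ in $D$; inverting the derivative estimate gives $|df^\ell| \geq C(\e)^{-1}\lambda_1^{\ell/2}/\ell$. The delicate point, and the only place where care is needed, is to verify that $i_0$, $r(\e)$ and the resulting $C(\e)$ depend solely on $\e$ and not on $\ell$: this is precisely the uniformity built into Proposition \ref{prop:positive}, since the distinguished-inverse-branch estimate there holds simultaneously for all $n$ after the initial choice of $\qq$ and $A = A(\e)$.
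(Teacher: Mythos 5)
Your reduction of item \emph{i.} to the invariance of $\check\mu$ is reasonable, but the heart of the proposition --- items \emph{ii.} and \emph{iii.} with constants depending only on $\e$ --- does not follow from ``unwinding the construction of $\check f^\ell$'', and this is a genuine gap. The map $\check f$ is built one step at a time: at each iterate the new disk is a cluster value of a sequence of pointed disks chosen \emph{for that step}, so being in the domain of $\check f^\ell$ almost everywhere only provides, at each of the $\ell$ steps, some approximating sequence for that step, and these sequences are a priori unrelated. Nothing in your argument produces a single coherent sequence $(x_k,D_k)\cv(x,D)$ subordinate to $T^-_{k,\qq}$ whose images $f^\ell(D_k)$ are traced on good disks near $f^\ell(x_k)$ and whose natural branches of $f^{-\ell}$ converge back to $x$; hence your set $\check A_2$, and with it item \emph{iii.}, is not justified. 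The paper resolves exactly this point by introducing the sets $A_{k,\qq_i}(\ell)$ of points of $\supp T^-_{k,\qq_i}$ sent by $f^\ell$ back into $\supp T^-_{k,\qq_i}$ (``good disk to good disk under $f^\ell$''), taking $\check A_\e(\ell)$ to be the tautological bundle over $\limsup_k A_{k,\qq_i}(\ell)$ for $i=i(\e)$; the measure bound $\geq 1-\e$ then requires the near-invariance $f^\ell_*\mu_k=\mu_k+O(\ell/k)$ together with geometric intersection (the same estimate as in the construction of $\check f$), an estimate your proposal never makes --- you only use invariance of $\check\mu$, which controls $\check A_1$ but not the coherence of the approximating data.

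A second, related problem is the constant in \emph{ii.} Proposition \ref{prop:positive}\emph{iii.} gives $\abs{df_{-\ell}}\leq C(\e)\ell\lambda_1^{-\ell/2}$ only for branches along disks of the \emph{fixed} size $\qq(\e)$: the constant comes from the Briend--Duval area--diameter estimate on the smaller concentric cube and degrades as the scale shrinks. In your scheme the distinguished structure at a generic point is certified only at some scale $\qq_{i'}$ depending on the point, and intersecting with $\check A_1$ does not upgrade it: the large disk of $\check E_j$, $j\leq i_0$, containing the germ of $f^\ell(D)$ is indeed approximated by good disks at scale $\qq_j$, but a normal limit of the natural branches along those disks is merely \emph{some} distinguished inverse branch of $f^\ell$, with no guarantee that it sends $f^\ell(x)$ to $x$ rather than to another preimage (uniqueness of such branches is only established later, in Proposition \ref{lem:separ}, almost everywhere and via recurrence). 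In the paper's proof both the coherence and the uniformity come from $A_{k,\qq_i}(\ell)$: the pointed disks $(x_k,D_k)$ satisfy $\norm{df^\ell_{x_k}(e(\check x_k))}\geq C(\qq_i)\lambda_1^{\ell/2}\ell^{-1}$, the no-folding Lemma \ref{lem:graph} gives $e(\check x_k)\cv e(\check x)$ so the derivative bound passes to the limit, and the well-definedness of the intermediate iterates of $\check f$ is handled separately by noting that for $x\notin\mathrm{Crit}(f^\ell)$ the intermediate germs are traced on disks of some finer subdivision. You should rebuild your set along these lines rather than by pulling back large disks and invoking the construction of $\check f$.
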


\begin{proof} Replace the set $A_{k,\qq_i}$ used in the previous proof with the analogous set $A_{k,\qq_i}(\ell)$ of points $x\in \supp T^-_{k,\qq_i}$ such that $f^\ell(x)\in\supp T^-_{k,\qq_i}$. Define $A_{\qq_i}(\ell) =
\limsup(A_{k,\qq_i}(\ell))$ and consider as before the tautological bundles over these sets.  As we now explain, it suffices to take $\check A_\e(\ell) = \check A_{\qq_i}(\ell)$ for large enough $i$.

We first need to check that $\check f^\ell (\check x)$ is well defined for almost every point in $\check A_{\qq_i}(\ell)$.  The point is that there is a piece of disk in $Z(\qq_i)$ sent by $f^\ell$ to
a piece of disk of $Z(\qq_i)$, however  along the branch $x, f(x), \ldots, f^\ell(x)$ the disk can become small. Nevertheless if $x\notin \mathrm{Crit}(f^\ell)$, $f^q$ is locally invertible at $x$ for $1\leq q\leq
\ell$, so  all\footnote{technically, we need to avoid the measure zero set of $x$ sent by $f^q$ onto boundaries of cubes} the germs $f^q(D)$ are traced on disks of \emph{some}, possibly much smaller, size  $\qq_{i'}$ depending on $\ell$.  The same holds for disks subordinate to $T^-_{k, \qq_i}$ that approximate $D$.  We conclude that 
 $\ell$ successive iterates of $\check f$ are defined at $\check x$.

Now, the conclusions of the lemma follow easily from the analysis of distinguished inverse branches in the proof of Proposition \ref{prop:positive}.  The only point that needs explanation is {\em ii.} If $(x,D)\in\check
A_{\qq_i}(\ell)$, then by construction, $(x,D)$ is the limit of a sequence of
pointed disks $(x_k,D_k)$ with $\norm{df^\ell_{x_k}(e(\check x_k))}\geq C(\qq_i)
\lambda_1^{\ell/2}\ell^{-1}$, where $e(\check x_k)$ is the unit tangent vector to
$D_k$ at $x_k$. Recall from Lemma \ref{lem:graph} that since the current $T^-$ is strongly approximable, there is no
multiplicity in the convergence of the disks subordinate to $T^-_{\qq,k}$.  So if $\check x_k\cv\check x$ in this construction, then $e(\check x_k)\cv e(\check x)$.
It follows for $x\notin I(f^\ell)$ that $df^\ell_{x_k}(e(\check x_k))\cv df^\ell_x(e(\check
x))$, giving the desired estimate.  We note that since the estimate extends across finite sets, it holds even at points in $I(f^\ell)$.
\end{proof}

Now we can estimate the positive exponent. For convenience here we take for granted that $\check\mu$  
is ergodic, a fact we will prove in Corollary \ref{cor:ergodic} below.

\begin{cor} For $\mu$-a.e.  $x$ there exist a tangent vector $e^u$  at $x$
and a set of integers $\nn'\subset \nn$ of density 1 
 such that 
\begin{equation}\label{eq:eubis}
 \liminf_{ \nn'\ni n\cv\infty} \unsur{n} \log\abs{df^n(e^u(x))}\geq 
\frac{\log\lambda_1}{2}.
\end{equation}
\end{cor}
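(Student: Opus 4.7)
The plan is to work entirely inside the tautological extension $\check X$, where the ``unstable'' tangent direction $e^u$ becomes single-valued. For $\mu$-almost every $x$, choose a lift $\check x = (x,D) \in \check X$ (such a lift exists for a.e.\ $x$ since $\check\pi_*\check\mu = \mu$) and define $e^u(x)$ to be a unit tangent vector to $D$ at $x$; the ambiguity of sign is irrelevant since only $|df^n(e^u)|$ appears in \eqref{eq:eubis}. The central object is the derivative cocycle
\[
\phi_n(\check x) := \log \abs{df^n_x\!\left(e^u(\check x)\right)},
\]
which, thanks to the construction of $\check f$ (which transports the unit tangent vector of the source disk to that of the image disk, up to normalization), is additive over $\check f$: $\phi_{n+m}(\check x) = \phi_n(\check x) + \phi_m(\check f^n \check x)$.

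The decisive input is Proposition \ref{lem:technical} applied with $\ell = n$, in particular item \emph{ii.} there: for every $\e > 0$ and every sufficiently large $n$, the set
\[
\check A_\e(n) := \Bigl\{\check x \in \check X : \phi_n(\check x)/n \geq \log\lambda_1/2 - \eta_n \Bigr\},
\]
with $\eta_n = O((\log n)/n) \to 0$, has $\check\mu$-measure at least $1 - \e$. Equivalently, writing $\check B_\e(n) := \check X \setminus \check A_\e(n)$, one has $\check\mu(\check B_\e(n)) \leq \e$ for $n \geq N(\e)$. To convert this ``for each $n$, most $\check x$'' estimate into a ``for a.e.\ $\check x$, most $n$'' statement I would combine it with ergodicity of $\check\mu$ (forthcoming Corollary \ref{cor:ergodic}) and a Fubini-type computation:
\[
\int_{\check X} \frac{1}{N}\#\{n \leq N : \check x \in \check B_\e(n)\}\, d\check\mu(\check x) = \frac{1}{N}\sum_{n=1}^N \check\mu(\check B_\e(n)) \leq \e + o(1).
\]

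Markov's inequality then shows that the density of bad times is small on a subset of $\check X$ of large measure; iterating this along a summable sequence $\e_k \to 0$ and applying the Borel--Cantelli lemma to the corresponding exceptional sets in $\check X$ yields, for $\check\mu$-a.e.\ $\check x$, that the density of bad times can be made arbitrarily small. A standard diagonal construction then produces a single set $\nn'(\check x)\subset \nn$ of density $1$ along which $\liminf \phi_n(\check x)/n \geq \log\lambda_1/2$. Projecting to $(X,\mu)$ via $\check\pi$ gives the statement for $\mu$-a.e.\ $x$. The main obstacle is precisely what the density-$1$ formulation is designed to circumvent: since hypothesis (H3) is \emph{not} assumed here, the cocycle $\phi_1$ may fail to be integrable, and neither the Birkhoff nor the Kingman ergodic theorem applies to yield direct convergence of $\phi_n/n$. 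The combinatorial density argument is the natural substitute, trading a Birkhoff-type limit for a density bound that directly exploits the uniform-in-$n$ estimate provided by Proposition \ref{lem:technical}.
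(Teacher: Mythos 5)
Your proposal is correct and is essentially the paper's own argument: the key input is item \emph{ii.} of Proposition \ref{lem:technical} together with the choice $e^u = e(\check x)$ in the tautological extension, and your Fubini--Markov computation converting the ``measure $\geq 1-\e$ for each fixed $n$'' estimate into an a.e.\ density statement is exactly the content of Lemma \ref{lem:elementary}. The only deviations are minor and harmless: the paper upgrades the sets $\check A_\e$ to full measure by observing they are invariant and invoking ergodicity (Corollary \ref{cor:ergodic}), whereas you use a summable sequence $\e_k$ and Borel--Cantelli, which works just as well; in your version neither the ergodicity you mention nor the additivity of the cocycle $\phi_n$ is actually used.
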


Given $\check{x} = (x,D) \in \check X$, we let $e(\check{x})$ denote the tangent vector to $D$ at $x$. 
The proof makes evident that one can take $e_u = e(\check x)$ for $\check{\mu}$ 
a.e. $\check{x}$.

\begin{proof} We have the following lemma from elementary measure theory (see
below for the proof).
\begin{lem}\label{lem:elementary}
 Let $(Y,m)$ be a probability space, and $(A_n)_{n\geq 1}$ a collection of sets
of measure $\geq 1-\e$. Then for every $\delta>\e$, 
$$m\left(\set{y\in Y, y\in A_n \text{ for a set of integers }n\text{ of density
}\geq  1-\delta}\right)\geq 1-\frac{\e}{\delta}.$$
\end{lem}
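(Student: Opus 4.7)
The plan is to reduce the statement to Fatou's lemma and Markov's inequality applied to the empirical averages
$$F_N(y) := \frac{1}{N}\sum_{n=1}^N \mathbf{1}_{A_n^c}(y).$$
By linearity and the hypothesis $m(A_n^c) \leq \e$, one has $\int F_N\,dm \leq \e$ for every $N$.

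The key translation is: the set of indices $\{n : y \in A_n\}$ has density $\geq 1-\delta$ if and only if $\liminf_{N\to\infty} F_N(y) \leq \delta$. Indeed, $\frac{1}{N}\#\{n \leq N : y \in A_n\} = 1 - F_N(y)$, so the requirement that this quantity be $\geq 1-\delta$ for arbitrarily large $N$ is equivalent to $\liminf_N F_N(y) \leq \delta$. Thus the complement in $Y$ of the ``good'' set appearing in the lemma is precisely
$$B := \{y \in Y : \liminf_N F_N(y) > \delta\}.$$

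Then I would apply Fatou's lemma to the non-negative sequence $(F_N)$, which gives
$$\int \liminf_N F_N \, dm \;\leq\; \liminf_N \int F_N \, dm \;\leq\; \e,$$
and Markov's inequality to the non-negative function $\liminf_N F_N$, which yields $m(B) \leq \e/\delta$. Taking complements gives the lemma.

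There is no serious obstacle here; the argument is essentially two lines. The only point requiring care is the convention for ``density $\geq 1-\delta$'': one must interpret it as the existence of arbitrarily large $N$ with $\#\{n \leq N : y \in A_n\} \geq (1-\delta)N$ (equivalently, upper density $\geq 1-\delta$), which is what makes the bound sharp. With the stronger interpretation that $\liminf_N \#\{n \leq N : y \in A_n\}/N \geq 1-\delta$, the statement would fail, as one can see by letting $A_n^c$ oscillate between two small disjoint subsets of $Y$ along a set of integers $S$ of upper density $1$ and lower density $0$.
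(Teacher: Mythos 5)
Your proof is correct and is essentially the paper's argument: both integrate the empirical frequency of bad indices and finish with a Markov-type bound — the paper applies Chebyshev to each $\varphi_N=\sum_{n\le N}\mathbf{1}_{A_n}$ to get sets $B_N=\{\varphi_N\ge(1-\delta)N\}$ of measure $\ge 1-\e/\delta$ and then takes the points lying in infinitely many $B_N$, which you compress into Fatou's lemma plus a single Markov inequality for $\liminf_N F_N$. Your reading of ``density $\ge 1-\delta$'' as upper density also matches the paper's implicit one (its conclusion is membership in infinitely many $B_N$, i.e. frequency $\ge 1-\delta$ along infinitely many $N$), and the microscopic discrepancy between that set and your set $\{\liminf_N F_N\le\delta\}$ is harmless, since each version of the bound recovers the other by letting $\delta$ vary.
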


With notation as in Proposition  \ref{lem:technical}, let 
$$\check A_\e = \set{\check x,\ \check x \in \check A_\e(\ell)  \text{ for a set
of integers }\ell\text{ of density }\geq  1-\sqrt{\e}}.$$
By the previous lemma, $\check\mu(\check A_\e)\geq 1-\sqrt{\e}$.  
If $\check x\in \check A_\e$, then \eqref{eq:eubis} holds for a set of integers
$\nn_\e$ of density $\geq 1-\sqrt{\e}$.
To conclude, observe that  $\check A_\e$ is an invariant set, so that by
ergodicity  it has full measure.
\end{proof}

\begin{proof}[Proof of Lemma \ref{lem:elementary}] 
Consider the function $\varphi_N = \sum_{n=1}^N \mathbf{1}_{A_n}$ (with possibly
$N=\infty$). 
We have that $0\leq \varphi_N\leq N$ and $\int\varphi_N dm \geq (1-\e)N$. We
leave the reader prove that for every $\delta>\e$, 
$$m(B_N) \geq 1-\frac{\e}{\delta} \text{ where } B_N =\set{y,\
\varphi_N(y)\geq(1-\delta)N}.$$ 
In particular taking $\delta$ close to 1, at this point we conclude that for
every fixed $C$ and $N$ large enough, 
$m(\set{\varphi_N\geq C}) \geq 1-\e$. In particular for every $C$
$m(\set{\varphi_\infty \geq C}) \geq 1-\e$, so the set of points belonging to
infinitely many $A_n$ has measure $\geq 1-\e$.   

Now the $(B_N)$ themselves form an infinite collection of sets of measure $\geq
1-{\e}/{\delta}$ so applying the same reasoning proves that the set of $y$
belonging to infinitely many $B_N$'s has measure $\geq  1-{\e}/{\delta}$ which
is the desired statement.
\end{proof}

\section{The natural extension and  entropy}\label{sec:entropy}

In this section we analyze the natural extension of $(X,\mu,f)$. There are two
main steps: prove that the natural extension of $(X,\mu,f)$ is the same as that of $(\check X,\check\mu,\check f)$, and 
analyze the conditional measures of $\hat \mu$ relative to the unstable partition in the natural extension.

We first show that different disks subordinate to $T^-$ correspond to 
different histories, as one would expect for unstable manifolds. 

\begin{pro} \label{lem:separ}
 Let $\hat{x}\in \hat{X}$ be a $\hat\mu$-generic point. Then there exists a
{\em unique}   disk $D$ subordinate to some $T^-_\qq$, together with a
 sequence of inverse branches $f_{\hat{x}, -n}$ defined on 
$D$,  such that $f_{\hat{x}, -n}(x_0)=x_{-n}$  and $f_{\hat{x}, -n}$ 
contracts exponentially on $D$.
\end{pro}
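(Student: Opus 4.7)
Existence is immediate from Proposition~\ref{prop:positive}: for $\hat\mu$-a.e.~$\hat x$, the history is $\qq$-distinguished for some $\qq$ (since the set of distinguished histories $\xcd$ has full measure), and attached to it is a disk $D\subset\supp m(T^-_\qq)$ through $x_0$ together with compatible distinguished inverse branches $f_{-n}$ satisfying $f_{-n}(x_0)=x_{-n}$ and $\abs{df_{-n}\rest{D}}\leq C(\e)n/\la_1^{n/2}$.

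For uniqueness, the first remark is that the inverse branches themselves are canonically determined. For $\hat\mu$-a.e.~$\hat x$, one has $x_{-n}\notin C_f\cup I(f^n)$ for every $n\geq 1$, since each coordinate projection $\pi_{-n}$ pushes $\hat\mu$ forward to $\mu$ and $\mu$ charges neither the critical curve $C_f$ nor the (countable union of) finite indeterminacy sets $I(f^n)$. Hence $f^n$ is a local biholomorphism near $x_{-n}$ admitting a unique local inverse $\phi_n$ with $\phi_n(x_0)=x_{-n}$, and any admissible $f_{-n}$ coincides with $\phi_n$ as a germ at~$x_0$.

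The core step is uniqueness of the tangent direction of $D$ at $x_0$. Suppose $D_1,D_2$ had linearly independent tangent vectors $v_1,v_2$ at $x_0$; the pullbacks $u_j=d\phi_n(v_j)\in T_{x_{-n}}X$ satisfy $\abs{u_j}\leq C(\e)n\la_1^{-n/2}$ and remain linearly independent, so they form a basis of $T_{x_{-n}}X$ in which we may write the stable vector $e^s(x_{-n})=a_1 u_1+a_2 u_2$ (defined $\mu$-a.e.\ by item~{\em i.} of Theorem~\ref{thm:complete}, hence at every $x_{-n}$ for $\hat\mu$-a.e.~$\hat x$). Since $\abs{e^s(x_{-n})}=1$ while $\abs{u_j}$ is exponentially small, necessarily $\max(\abs{a_1},\abs{a_2})\geq c\la_1^{n/2}/n$. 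Applying $df^n_{x_{-n}}$ yields
$$
df^n_{x_{-n}}(e^s(x_{-n}))\;=\;a_1 v_1+a_2 v_2,
$$
whose norm is bounded below by $\max(\abs{a_1},\abs{a_2})\cdot\abs{\sin\theta}$, where $\theta$ is the fixed angle between $v_1$ and $v_2$. This contradicts the Section~\ref{sec:negative} bound $\abs{df^n(e^s(x))}\leq C(\e)n(\la_2/\la_1)^{n/2}$, valid uniformly on a set $A_\e\subset X$ of $\mu$-measure $\geq 1-\e$, as soon as $x_{-n}\in A_\e$; by Birkhoff's theorem applied to the shift on $(\hat X,\hat\mu,\hat f)$, this happens with density $\geq 1-\e$, hence for arbitrarily large $n$, producing an exponential contradiction between $\la_1^{n/2}/n$ and $n(\la_2/\la_1)^{n/2}$.

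Once the tangent line at $x_0$ is known to be intrinsic to $\hat x$, uniqueness of the disk $D$ itself should follow from the structural properties of strongly approximable woven currents: each candidate is a graph of area $\leq 1/2$ over one projection in a cube of a definite size, and the germ of unstable manifold at $x_0$---already pinned down by the common branches $\phi_n$ and common tangent---extends uniquely to a disk of definite size via the analytic continuation property of Theorem~\ref{thm:cont}. This last step is the principal obstacle I anticipate, since the woven (not laminar) nature of $T^-$ permits \emph{a priori} several disks of $\supp m(T^-_\qq)$ through a generic $x_0$; ruling this out requires combining the analytic continuation theorem with the exponential contraction along the history to conclude that the two graphs coincide on their common domain.
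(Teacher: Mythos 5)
Your existence step is the same as the paper's (it is read off from Proposition \ref{prop:positive}), and your exponent-contradiction argument is a correct and rather pleasant way to exclude two candidate disks that are \emph{transverse} at $x_0$: playing the uniform backward bound $C(\e)n\la_1^{-n/2}$ along the disks against the uniform forward bound $C(\e)n(\la_2/\la_1)^{n/2}$ on stable vectors at the return times $x_{-n}\in A_\e$ is legitimate (modulo the routine fact, which you should justify, that $\mu$ charges neither $C_f$ nor the sets $I(f^n)$, so that the inverse branches are indeed common germs). But the proof is not complete, and the gap is exactly where you locate it: uniqueness of the tangent line at $x_0$ does not give uniqueness of the disk, and the tangential case is the whole difficulty. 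Your proposed patch via Theorem \ref{thm:cont} cannot close it: that theorem only asserts that the restriction of $T^-$ to a web is uniformly woven; it says nothing that would prevent two \emph{distinct} disks of $\supp m(T^-_\qq)$ from passing through $x_0$ with the same tangent. Webs, unlike laminations, allow tangencies and crossings, so no structural property of $T^-$ alone can pin down the germ beyond first order --- the identification of the two disks must come from the dynamics.

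The paper closes precisely this gap using the \emph{laminar} (not woven) side, i.e.\ $T^+$: near $x_0\in\supp(\mu_\qq)$ there is a genuine local stable lamination $\el^s$ made of disks subordinate to $T^+$, unique by \cite[Theorem 1.1]{structure}, invariant under $f$ and uniformly contracted forward at rate $O(n\la_2^n/\la_1^n)$. By Poincar\'e recurrence there are infinitely many $n$ with $x_{-n}\in B(x_0,r_0/2)$; for such $n$, the points $f_{-n}(L\cap D_1)$ and $f_{-n}(L\cap D_2)$ (where $L$ is any stable leaf meeting both disks) lie on a common stable leaf $L'$ near $x_{-n}$, and pushing forward $n$ times contracts $L'$ by $Cn(\la_2/\la_1)^n$; hence $L\cap D_1=L\cap D_2$. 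Since stable leaves accumulate on the leaf through $x_0$, this produces points of $D_1\cap D_2$ accumulating at $x_0$, and $D_1=D_2$ follows from the identity principle. Note that this mechanism treats the transverse and tangential configurations simultaneously, so your tangent-direction step, while valid, is not the missing ingredient; what is genuinely needed (and absent from your proposal) is a device --- here the stable holonomy coming from laminarity of $T^+$ together with recurrence --- that pairs up points of the two candidate disks and forces them to coincide on a set with an accumulation point.
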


\begin{proof}
Proposition \ref{prop:positive} tells us that distinguished histories have full measure in the natural extension.
So we may assume that $\hat{x}$ is $\qq$-distinguished for some $\qq$.  We therefore have a disk $D\ni x_0$ that is subordinate to $T^-_\qq$ and equipped with a compatible sequence $(f_{-n})$ of inverse branches such that $f_{-n}(x_0) = x_{-n}$.  Proposition \ref{prop:positive} further guarantees that $f_{-n}$ is exponentially contracting on $D$. It remains to show $D$ is unique. 

Consider the measure $\mu_\qq = T^+_\qq\geom T^-_\qq$.  By construction, for each $x\in \supp(\mu_\qq)$ there exists a radius $r = r(x)$ such that  
the disks subordinate to $T_\qq^+$ are  submanifolds in $B(x,r)$ and get contracted at uniform exponential speed $O(n\la_2^n/\lambda_1^n)$ under forward iteration. Every point in $\supp(\mu_\qq)$ has such a local ``stable manifold'',
which is then unique because $T^+$ is strongly approximable and laminar \cite[Theorem 1.1]{structure}. 
Let $\el^s\cap B(x,r)$ be the stable lamination near $x\in \supp\mu_\qq$, that is, the union of stable manifolds of points in $\supp(\mu_\qq)\cap B(x,r)$.
Since the image of a disk subordinate to $T^+$ under $f$ (resp. under a branch of $f^{-1}$ defined in an open set) 
is a disk subordinate to $T^+$, the stable lamination is invariant under the dynamics: 
$f(\el^s\cap B(x,r)) \subset \el^s$.

Now suppose for some generic history $\hat{x}$, that $D_1,D_2$ are two disks through $x_0$ satisfying the conclusions
of the proposition.  Then for $n$ large enough, we have $f_{-n}(D_j)\subset B(x_{-n},r_0/2)$ where $r_0 = r(x_0)$.  And since $x_0 \in \supp \mu_\qq \subset\supp\mu$, Poincar\'e recurrence for $\hat\mu$ 
 gives an infinite set $S\subset \N$ of $n$ such that $x_{-n} \subset B(x_0,r_0/2)$.  So if $L$ is any leaf in $\el^s \cap B(x_0,r)$ that meets both $D_1$ and $D_2$, then for every $n\in S$, $f_{-n}(L)\cap B(x_0,r_0)$ is contained in 
 a leaf $L'$ intersecting the preimages of either disk $f_{-n}(D_j)$.  Contraction of stable leaves then gives.
$$
\mathrm{distance}(L\cap D_1,L\cap D_2) \leq Cn\left(\frac{\lambda_2}{\lambda_1}\right)^n \mathrm{diameter}(L')  = O \left(\frac{n\lambda_2^n}{\lambda^n_1}\right).
$$
As this is true for all $n\in S$, we conclude that $L\cap D_1 = L\cap D_2$.  Similarly, neither $D_j$
can be entirely contained in the stable leaf through $x_0$, because pulling back and pushing forward would, in the same fashion, show that the diameter of $D_j$ vanishes.

Now we can conclude.  Leaves in $\el^s\cap B(x_0,r_0)$ accumulate on the one through $x_0$.  Hence there are infinitely many such leaves intersecting both $D_1$ and $D_2$.  Since the intersection points all lie in $D_1\cap D_2$, we see that $x_0$ is an accumulation point of $D_1\cap D_2$.  It follows that $D_1 = D_2$; i.e. the disk $D$ in the proposition is unique.
\end{proof}

\begin{rqe}\label{rmk:separ}
As is clear from the proof, the uniqueness assertion of the proposition holds for a given distinguished history $\hat x$ as long as there are infinitely many $n$ for which $x_{-n}\in \supp(\mu_\qq)$, and we only need $f_{\hat x,-n}$ to be contracting for these $n$. 

The proof also makes clear that the disk $D$ is unique \emph{regardless of whether it is subordinate to $T_\qq^-$}.  This shows that the web supporting $T^-_\qq$ is essentially independent, along histories in $\hat X$, of the manner in which it was constructed.  It also shows that for almost any $\hat x\in\hat X$, the disk $D$ is the only reasonable candidate for a `local unstable manifold' associated to $\hat x$.  We will therefore refer to such disks as local unstable manifolds and to the resulting partitions of $\hat\mu$ and $\check\mu$ as the `unstable partition' of each.  We should emphasize, however, that we do not know whether $D$ is the full local unstable set of $\hat x$---i.e. whether something like the local unstable manifold theorem holds in the present context.
\end{rqe}

\begin{pro}\label{prop:hatcheck}
The natural extension $(\hatcheck{X},\hatcheck{\mu} ,\hatcheck{f})$ of $(\check X, \check\mu , \check f)$ is measurably
isomorphic to that of $(X,\mu,f)$.
\end{pro}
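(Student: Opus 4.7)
The plan is to exploit the universal property of the natural extension together with the uniqueness statement of Proposition \ref{lem:separ}. Since $\check\pi\colon(\check X,\check\mu,\check f)\to(X,\mu,f)$ and $\hat{\check\pi}\colon(\hatcheck X,\hatcheck\mu,\hatcheck f)\to(\check X,\check\mu,\check f)$ are both semiconjugacies, their composition $\check\pi\circ\hat{\check\pi}$ is a semiconjugacy of the invertible system $(\hatcheck X,\hatcheck\mu,\hatcheck f)$ onto $(X,\mu,f)$. The universal property of $(\hat X,\hat\mu,\hat f)$ then yields a measurable equivariant map $\Phi\colon\hatcheck X\to\hat X$, acting on histories by $\Phi\bigl((\check x_n)_{n\leq 0}\bigr)=(\check\pi(\check x_n))_{n\leq 0}$. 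Since $(\pi_{-n})_*\Phi_*\hatcheck\mu=\mu$ for every $n\geq 0$, Lemma \ref{lem:inverse} gives $\Phi_*\hatcheck\mu=\hat\mu$, so $\Phi$ is measure preserving.

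To build an essential inverse $\Psi\colon\hat X\to\hatcheck X$, I use Propositions \ref{prop:positive} and \ref{lem:separ}: for $\hat\mu$-a.e.\ $\hat x=(x_n)_{n\leq 0}$ there is a unique disk $D(\hat x)\ni x_0$ subordinate to some $T^-_\qq$ admitting a compatible family of contracting inverse branches $(f_{\hat x,-k})$ with $f_{\hat x,-k}(x_0)=x_{-k}$. Define
$$\Psi(\hat x):=(\check x_n)_{n\leq 0},\qquad \check x_n:=(x_n,D(\hat f^{n}\hat x)).$$
Measurability of $\Psi$ follows from expressing $\hat x\mapsto D(\hat x)$ as a limit of the measurable family of distinguished disks produced in Proposition \ref{prop:positive}. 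That $\Psi$ actually lands in $\hatcheck X$ requires $\check f(\check x_{n-1})=\check x_n$, i.e.\ that $f(D(\hat f^{n-1}\hat x))$ and $D(\hat f^{n}\hat x)$ define the same germ at $x_n$. The first germ inherits, by composing with the local inverse of $f$ near $x_n$, a compatible family of contracting inverse branches adapted to the history $\hat f^n\hat x$, so by the uniqueness part of Proposition \ref{lem:separ} (cf.\ Remark \ref{rmk:separ}) it must coincide with $D(\hat f^n\hat x)$.

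It remains to check $\Phi\circ\Psi=\mathrm{id}$ and $\Psi\circ\Phi=\mathrm{id}$ almost everywhere. The first is immediate from the definitions. For the second, fix a $\hatcheck\mu$-generic $\hatcheck x=((x_n,D_n))_{n\leq 0}$ and let $\hat x:=\Phi(\hatcheck x)$. The $\check f$-compatibility of $(\check x_n)$ means that $f(D_{n-1})$ and $D_n$ share a germ at $x_n$, and iterating produces a coherent family of inverse branches of $f^k$ along $D_0$ matching the history $\hat x$. Moreover, for $\hatcheck\mu$-a.e.\ $\hatcheck x$, infinitely many $\check x_n$ lie in the good subset of $\check X$ supplied by Proposition \ref{prop:positive}, and along such returns the derivative bound \eqref{eq:derivative} forces the branches above to contract exponentially. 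Remark \ref{rmk:separ} then applies to give $D_0=D(\hat x)$, and by equivariance $D_n=D(\hat f^n\hat x)$, whence $\Psi\circ\Phi=\mathrm{id}$ a.e.

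The principal obstacle is the last step: the disks $D_n$ attached to a generic point of $\hatcheck X$ are a priori merely disks appearing in some marking of $T^-_{\qq_{i(n)}}$, and one must ensure that the chain of inverse branches of $f^k$ along $D_0$ produced by the $\hatcheck f$-compatibility is genuinely contracting, so that it falls under the scope of Proposition \ref{lem:separ}. The input is the uniform derivative estimate of Proposition \ref{prop:positive}.iii at infinitely many return times, which is guaranteed by Poincaré recurrence of $\hatcheck\mu$ in the good set; once this is in hand, uniqueness of local unstable manifolds along generic histories closes the argument.
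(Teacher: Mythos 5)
Your skeleton is the same as the paper's: the universal property gives the factor map $\Phi=\eta:\hatcheck{X}\to\hat X$, $(x_n,D_n)\mapsto(x_n)$, and everything reduces to showing that for $\hatcheck{\mu}$-a.e.\ point the disk component is recoverable from the history via the uniqueness statement of Proposition \ref{lem:separ} and Remark \ref{rmk:separ}. But the step you yourself flag as the principal obstacle is exactly where your justification fails as written. To invoke Remark \ref{rmk:separ} for a generic $\hatcheck{x}=((x_n,D_n))$ you need, for infinitely many $\ell$, an exponential contraction estimate for the branch of $f^{-\ell}$ carrying the germ of $D_0$ at $x_0$ to $D_{-\ell}$, and you propose to get it from Proposition \ref{prop:positive}.iii/\eqref{eq:derivative} plus Poincar\'e recurrence into a fixed ``good subset of $\check X$''. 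Two problems. First, Proposition \ref{prop:positive}.iii and \eqref{eq:derivative} concern \emph{distinguished} inverse branches, whereas the branches forced on you by $\hatcheck{f}$-compatibility of a generic point of $\hatcheck{X}$ are not known to be distinguished --- that is precisely what is in question. Second, the estimate you need is about the $\ell$-fold composition starting at time $-\ell$, i.e.\ about membership of $\check x_{-\ell}$ in the $\ell$-dependent sets $\check A_\e(\ell)$ of Proposition \ref{lem:technical} (whose proof uses the no-folding Lemma \ref{lem:graph} to pass the expansion bound to limit disks); recurrence of the orbit into one fixed positive-measure set does not yield ``$\check x_{-\ell}\in\check A_\e(\ell)$ for infinitely many $\ell$''. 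The paper obtains this from the uniform bound $\hatcheck{\mu}(\hatcheck{A}_\e(\ell))\geq 1-\e$ together with the elementary density Lemma \ref{lem:elementary}, not from Poincar\'e recurrence; with that substitution your argument becomes the paper's proof.

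A secondary point: by building an explicit inverse $\Psi$ you take on verifications the paper deliberately avoids. The map $\check f$ is only defined $\check\mu$-almost everywhere (Theorem \ref{thm:tautological}), and a point of $\check X$ is a pair (point, \emph{disk}), not (point, germ); so you must check that $(x_n,D(\hat f^n\hat x))$ are genuine points of $\check X$ and that the transitions agree with $\check f$ as constructed, not merely at the level of germs. The paper instead proves that $\eta$ is a.e.\ injective: off the countable union $\bigcup_{n\geq 0}f^n(C(f^n))$ the backward germs $D_{-n}$ are determined by $D_0$ and $(x_n)$, so once $D_0=D(\hat x)$ a.e., the measure-preserving map $\eta$ between Lebesgue spaces is a.e.\ injective and hence an isomorphism; this is the cleaner route to the conclusion.
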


\begin{proof}
The natural extension of $(\check X, \check\mu , \check f)$ is the set of sequences $(\check x_n) = (x_n,D_n)$, indexed by $\zz$, with $\check f(\check x_n)= \check x_{n+1}$.  Observe that if $x_0$ does not belong to $\bigcup_{n\geq 0}
f^n(C(f^n))$, then for positive $n$, there is a unique germ $D_{-n}$ at $x_{-n}$ such that $f^n(D_{-n})
= D_0$. In particular the whole sequence $(D_n)$ is determined by 
$D_0$ and $(x_n)$. 

Furthermore, since $(\hatcheck{X},\hatcheck{\mu} ,\hatcheck{f})$
is an invertible dynamical system projecting onto $(X,\mu,f)$, the universal property of $(\hat X,\hat\mu, \hat f)$
gives us an intermediate semiconjugacy 
$\eta : (\hatcheck{ X},\hatcheck{\mu} ,\hatcheck{f})\cv (\hat X,\hat\mu, \hat
f)$. 
In explicit terms, $\eta(x_n, D_n) = (x_n)$. 

Since the set of distinguished histories $\xcd$ has full measure in $\hat X$, we get that for $\hatcheck{\mu}$-a.e. $(x_n,D_n)$, $(x_n)\in \xcd$. By Proposition \ref{lem:separ} above, associated to the sequence $\hat x  = (x_n) \in \xcd$, there is a unique germ of disk $D(\hat x)$ through $x_0$ which is contracted exponentially in the past along the branch $x_{-n}$. The proof will be finished if we show that for  $\hatcheck{\mu}$-a.e. $(x_n,D_n)$, $D_0 = D(\hat x)$. 
Indeed, since $(D_{n})$ depends only on $D_0$, we will have found an inverse for $\eta$. 

Consider the sets $\check A_\e(\ell)$ as defined in Proposition \ref{lem:technical},
and define $\hatcheck{A}_\e(\ell)$  to be the set of sequences 
$(\check{x}_n)\in \hatcheck{X}$ with $\check{x}_{-\ell}\in \check A_\e(\ell)$.
We have that  $\hatcheck{\mu}(\hatcheck{A}_\e(\ell))\geq 1-\e$. Hence
by lemma \ref{lem:elementary} there is a set
$\hatcheck{A}_\e\subset\hatcheck{X}$ of measure $\geq 1-\sqrt{\e}$ of points belonging 
to $\hatcheck{A}_\e(\ell)$ for infinitely many $\ell$. By definition, if $(x_n,D_n)\in \hatcheck{A}_\e$, then
for infinitely many integers $\ell$, the (germ of) 
disk $D_0$ is contracted by the branch of $f^{-\ell}$ sending $D_0$ to
$D_{-\ell}$. By Proposition \ref{lem:separ} and 
Remark \ref{rmk:separ} we conclude that $D_0 = D(\hat x)$. 
\end{proof}

\begin{cor}\label{cor:ergodic}
 The measure $\check \mu$ is ergodic under $\check f$, and we have equality
between entropies $h(f,\mu) = h(\check f,\check\mu)$.
\end{cor}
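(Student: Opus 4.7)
The plan is to deduce both statements from Proposition \ref{prop:hatcheck} together with the two general facts about natural extensions recalled in \S \ref{sec:ergodic}: namely that the natural extension preserves entropy, and that the natural extension of a system is ergodic if and only if the original system is ergodic.

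For the entropy equality, I would simply chain isomorphisms and natural-extension invariants:
\begin{equation*}
h(f,\mu) \;=\; h(\hat f, \hat \mu) \;=\; h(\hatcheck f, \hatcheck\mu) \;=\; h(\check f, \check\mu).
\end{equation*}
The outer equalities use that passing to the natural extension preserves entropy, applied to $(X,\mu,f)$ and to $(\check X,\check\mu,\check f)$ respectively. The middle equality is just entropy-invariance under the measurable isomorphism provided by Proposition \ref{prop:hatcheck}.

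For ergodicity, I would first recall that $\mu$ is mixing by Theorem \ref{theo:ddg2}, hence ergodic; therefore its natural extension $(\hat X,\hat\mu,\hat f)$ is ergodic as well. Via Proposition \ref{prop:hatcheck}, the isomorphic system $(\hatcheck X,\hatcheck\mu,\hatcheck f)$ is also ergodic. Since $(\hatcheck X,\hatcheck\mu,\hatcheck f)$ is the natural extension of $(\check X,\check\mu,\check f)$, and since ergodicity of the natural extension is equivalent to ergodicity of the base, we conclude that $\check\mu$ is ergodic under $\check f$.

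There is no real obstacle here: the work has already been done in Proposition \ref{prop:hatcheck}, where the delicate point was identifying histories in $\hat X$ with pairs (history, unstable disk) in $\hatcheck X$ via the uniqueness statement of Proposition \ref{lem:separ}. Once the isomorphism of natural extensions is in hand, the corollary is a formal consequence of standard properties of natural extensions, and no further dynamical argument is required.
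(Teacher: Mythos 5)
Your proposal is correct and follows essentially the same route as the paper: both deduce the corollary formally from Proposition \ref{prop:hatcheck} together with the facts from \S\ref{sec:ergodic} that the natural extension preserves entropy and ergodicity. The only (cosmetic) difference is that the paper writes the entropy step as the sandwich $h(f,\mu)\leq h(\check f,\check\mu)\leq h(\hat f,\hat\mu)=h(f,\mu)$ using the factor maps, whereas you use two natural-extension equalities linked by the isomorphism; both are valid.
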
 

\begin{proof} 
Since $\mu$ is ergodic, 
so is $\hat \mu$, and therefore $\check\mu$. 
Also we have that  $h(f,\mu) \leq h(\check f,\check\mu)\leq h(\hat f,\hat \mu) =
h(f,\mu)$.
\end{proof}

From now on, depending on the context, we can think of  $(\hat X, \hat
\mu, \hat f)$ as the natural extension of either $(X,\mu,f)$ or $(\check
X,\check \mu, \check f)$.  
Figure \ref{figure:tautological} illustrates the relationship between the natural and tautological extensions. 
A disk subordinate to $T^-$ and its preimages under $\check\pi$ and $\pi_0$ has been underlined.
The notation $\check \el^s$, $\check\el^u$ is introduced in the proof of Theorem \ref{thm:product}
The existence of the
%intermediate projection  
%(the 
dashed arrow is ensured by the previous proposition, i.e. by the fact that
$\eta$ is a measurable  isomorphism. With notation as above, it is defined by 
$ \hat x \mapsto (x_0, D(\hat x))$.
% $$
% \xymatrix{& (\hatcheck{X},\hatcheck{\mu}, \hatcheck{f})\ni
% (x_n,D_n)\ar[dl]_{\eta} \ar[dr] &\\
% (\hat{X},\hat{\mu}, \hat{f})\ni (x_n)\ar[dr]^{\pi_0} \ar@{<.}[rr]& &
% (\check{X},\check{\mu}, \check{f})\ni (x_0,D_0)\ar[dl]_{\check{\pi}}\\
% & (X,\mu,f)\ni {x_0} & 
% }
% $$

\begin{figure}[h]
\begin{center}
\psfrag{u}{$W^u$}
\psfrag{s}{$W^s$}
\psfrag{T+}{$T^+$}
\psfrag{T-}{$T^-$}
\psfrag{Ducheck}{$\check\el^u$}
\psfrag{Dscheck}{$\check \el^s$}
\psfrag{X}{$X$}
\psfrag{Xcheck}{$\check X$}
\psfrag{Xhat}{$\hat X$}
\psfrag{pizero}{$\pi_0$}
\psfrag{picheck}{$\check\pi$}
\psfrag{eta}{}
\includegraphics[scale=0.5]{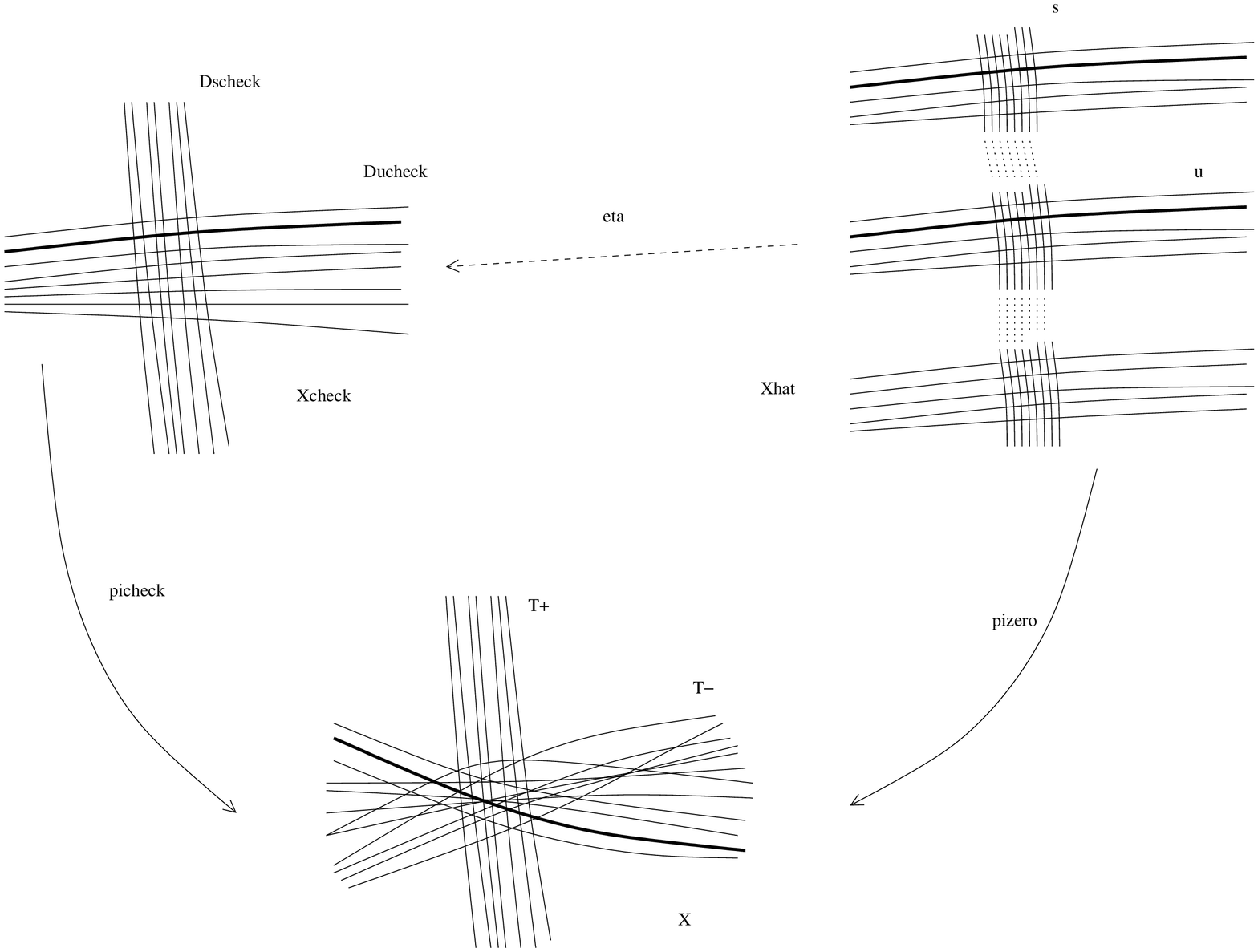}
\caption{Schematic picture of the tautological and natural extensions}\label{figure:tautological}
\end{center}
\end{figure}

\medskip
\begin{center}
 $\diamond$
\end{center}
\medskip

\noindent{\bf Aside: unstable manifolds and the natural extension.} 
%Before going further, we comment a bit about unstable manifolds and
%their dependence on histories. To get well-defined unstable manifolds for
%a non-invertible system, we need to go to the natural extension. 
%Caution is in order however, because we have only defined the natural extension as a 
%measure space. To address topological issues in  the natural extension, we use the 
%ordinary model of the shift acting on the space of histories, which is naturally a 
%compact metric space.
%The unstable set of $\hat x\in\hat X$ is 
%$$
%W^u(\hat x ) = \set{ \hat y \in \hat X, \lim_{n\cv\infty} \mathrm{dist}(\hat
%f^{-n}(x), \hat f^{-n}(y)) =0},
%$$
%the local unstable set being defined similarly. Under some hyperbolicity assumptions on $\hat x$, $W_{\rm loc}^u(\hat x)$ is a(n abstract) manifold, which projects isomorphically 
%onto a submanifold embedded in a neighborhood of $x_0 \in X$.  Different histories of 
%$x_0$ generally give rise to different local unstable manifolds in $X$.  We refer to 
%Przytycki \cite{prz} for a  more complete discussion. 
As it is well known, defining unstable manifolds for a non invertible system requires working in the natural extension. 
The unstable set of $\hat x\in\hat X$ is 
$
W^u(\hat x ) = \set{ \hat y \in \hat X, \lim_{n\cv\infty} \mathrm{dist}(\hat
f^{-n}(x), \hat f^{-n}(y)) =0}.
$
Under some hyperbolicity assumptions on $\hat x$, $W_{\rm loc}^u(\hat x)$
projects isomorphically onto a submanifold embedded in a neighborhood of $x_0 \in X$.  Different histories  generally give rise to different local unstable manifolds in $X$.
In our situation, the disks subordinate to $T^-$ play the role of unstable disks (Proposition \ref{lem:separ} and the remark thereafter). 
% In particular, different such disks come from different histories.

\begin{defi}
We say that the unstable manifolds of $f$ fully depend  on histories if the
assignment $\hat x \mapsto (x_0, D(\hat x))$ 
is 1-1 on a set of full measure; in other words, if the intermediate projection
$\hat X\cv\check X$ is  a measurable isomorphism.

We say that the unstable manifolds do not depend on histories if $\check\pi$
is a measurable isomorphism but 
$\pi_{0}:\hat X \cv X$ is not (i.e. $f$ is not essentially invertible).
\end{defi}

Przytycki proved in \cite{prz} that for $C^1$-generic Anosov endomorphisms of
tori, the unstable manifolds  depend on histories. 
%It should be possible to
%push his techniques to show that generically the dependence is full. 
Mihailescu and Urbanski \cite{murb} have proved the dependence on histories for certain
generic perturbations of saddle sets for holomorphic endomorphisms of $\pd$. 
It is natural to expect that for generic mappings with small topological degree (and not essentially invertible, see Example \ref{ex:henon}),
the unstable manifolds (fully) depend on histories.  However we do not know how to verify this, even on a single example!

\medskip
\begin{center}
 $\diamond$
\end{center}
\medskip

We return to establishing the ergodic properties of $\mu$.  In the next theorem we analyze the conditionals induced by $\hat \mu$ on the
partition by local unstable manifolds in $\hat X$. 

\begin{thm}\label{thm:conditional}
The conditional measures of $\hat{\mu}$ along the unstable partition in
$\hat{X}$ are induced by the current $T^+$. 
\end{thm}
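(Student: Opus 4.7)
My plan is to combine the description of conditional measures on $\check X$ given by Theorem \ref{thm:tautological} with the natural-extension isomorphism $\eta:\hatcheck X\to\hat X$ of Proposition \ref{prop:hatcheck}, and to transfer the resulting disintegration of $\check\mu$ to $\hat\mu$ using Lemma \ref{lem:project}.

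First I would identify the unstable partition $\hat\xi^u$ of $\hat X$ with the pullback to $\hatcheck X$ of the disk partition $\check D$ of $\check X$ from Theorem \ref{thm:tautological}(ii). Concretely, the measurable inverse of $\eta$ sends a generic $\hat x\in\hat X$ to $(x_0,D(\hat x))\in\check X$, where $D(\hat x)$ is the unique unstable disk attached to $\hat x$ by Proposition \ref{lem:separ}. The atom of $\hat\xi^u$ through $\hat x$, consisting of histories $\hat y$ with $D(\hat y)=D(\hat x)$ and $y_0\in D(\hat x)$, therefore corresponds under $\eta$ to the $\hatcheck\pi^{-1}(\check D)$-atom through $(x_0,D(\hat x))$. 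I expect this identification to be the main technical point of the proof: it rests crucially on the uniqueness statement in Proposition \ref{lem:separ}, together with the fact that distinguished histories have full $\hat\mu$-measure by Proposition \ref{prop:positive}(ii).

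Next I would apply Lemma \ref{lem:project} with $\tilde\nu=\hatcheck\mu$, $\nu=\check\mu$, $\pi=\hatcheck\pi$, $\xi=\check D$ and $\tilde\xi=\hatcheck\pi^{-1}(\check D)$. Since $\hatcheck\pi$ is measure preserving, the lemma yields $(\hatcheck\pi)_*\bigl(\hatcheck\mu_{\tilde\xi(\check x)}\bigr)=\check\mu_{\check D(\hatcheck\pi(\check x))}$. By Theorem \ref{thm:tautological}(iii) the right-hand side equals, up to normalization, $\bigl((\check\pi|_{\check D(\check x)})^{-1}\bigr)_*(T^+\geom D)$ where $D=\check\pi(\check D(\check x))$. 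Pushing forward once more by $\check\pi$ then produces the normalized measure $T^+\geom D$ on $D$.

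To finish I would note that $\pi_0\circ\eta=\check\pi\circ\hatcheck\pi$ and that $\pi_0$ restricts to a bijection from an unstable atom of $\hat X$ onto the underlying disk $D(\hat x)$, since a history in that atom is determined by $y_0\in D(\hat x)$ through the distinguished inverse branches supplied by Proposition \ref{prop:positive}. Chaining the two disintegrations, the conditional of $\hat\mu$ on the unstable atom through $\hat x$ identifies via $\pi_0$ with the normalized measure $T^+\geom D(\hat x)$ on $D(\hat x)$, which is precisely what is meant by ``induced by $T^+$'' in this context.
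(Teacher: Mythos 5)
There is a genuine gap, and it sits exactly at the step you yourself flag as the main technical point: the identification of the unstable partition with $\hatcheck{\pi}^{-1}(\check D)$. The atom of $\hatcheck{\pi}^{-1}(\check D)$ through $(x_0,D(\hat x))$ consists of \emph{all} backward orbits whose $0$-th coordinate lies on the marked disk, with completely arbitrary pasts; the unstable atom of $\hat x$ consists only of those histories $\hat y$ with $y_0\in D(\hat x)$ and $y_{-n}=f_{\hat x,-n}(y_0)$, i.e.\ following the \emph{same} sequence of distinguished inverse branches as $\hat x$. The uniqueness in Proposition \ref{lem:separ} says that a history determines its disk; it does not say that a base point together with a disk determines the history — that would be the statement that unstable manifolds ``fully depend on histories'', which is not assumed and can fail (for the monomial maps of Example \ref{ex:monomial} every history of a point of the invariant torus carries the same disk, so your partition is strictly coarser than the unstable one). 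This also makes your last paragraph inconsistent with your second: the pullback atom is not in bijection with $D(\hat x)$ under $\pi_0$. Consequently a single application of Lemma \ref{lem:project} to $\hatcheck{\pi}$ only computes the conditionals on the coarse partition $\pi_0^{-1}(\check D)$, and there is no formal reason why conditioning further on the past (whose branching weights could a priori vary along the disk) should again yield measures proportional to $T^+\geom D$. In effect your argument proves the theorem only in the case $\hat X\simeq\check X$, which the paper explicitly notes is the trivial case.

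The paper closes precisely this gap. It refines the coarse partition $\hat\xi_0=\pi_0^{-1}(\xi)$ by separating inverse branches of order $n$, obtaining partitions $\hat\xi_{-n}$ increasing to $\hat\xi^u$, whose atoms \emph{are} pullbacks, but under the deeper projections: an atom $C$ has the form $\pi_{-n}^{-1}(D_{-n})$ with $D_{-n}$ an inverse-branch image of $D=\pi_0(C)$. Lemma \ref{lem:project} applied to $\pi_{-n}$ gives $(\pi_{-n})_*\hat\mu_C=\check\mu_{D_{-n}}$, and then the invariance $f^*T^+=\lambda_1T^+$, in the form $T^+\wedge[D]=\lambda_1^n(f^n)_*(T^+\wedge[D_{-n}])$, yields $(\pi_0)_*\hat\mu_C=(f^n)_*\check\mu_{D_{-n}}=\check\mu_D$ after normalization. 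Finally the martingale convergence theorem passes from $\hat\xi_{-n}$ to $\hat\xi^u=\bigvee_{n\geq 0}\hat\xi_{-n}$. The refinement by inverse branches, the use of $\pi_{-n}$ together with the transformation rule of $T^+$, and the martingale limit are the ingredients your proposal is missing; they are the real content of the theorem.
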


The unstable conditionals are well understood in $\check X$, so we will take advantage of the fact that $\hat X$ is the natural extension of
$\check X$:  if $\hat{p}\in \hat X$ is a generic point, it has a local unstable manifold $W^u_{\rm loc}(\hat{p})$, and $\pi_0$ is a 
homeomorphism from $W^u_{\rm loc}(\hat{p})$ to a disk $D$ through $\pi_0(p)$. The statement of the theorem is that 
$\hat{\mu}_{W^u_{\rm loc}(\hat{p})} = c (\pi_0^{-1})_* (T^+\geom D)$ ($c$ is a normalization constant). Notice here that it is important to consider the natural
extension as a topological, and not only measurable, object. 
To address topological issues in  the natural extension, we use the 
ordinary model of the shift acting on the space of histories, which is naturally a 
compact metric space.
Note also that if the unstable manifolds fully depend on histories, then the result is obvious since $\hat{X}\simeq \check X$. 

\begin{proof}
We view $\hat X$ as the natural extension of $\check X$. So for notational ease 
we let $\pi_0$ denote the natural mapping $\hat X\cv\check X$. Consider also the sequence of projections $(\pi_n)_{n\in \zz}$ with
$\pi_{n+1} = \check f\circ\pi_n$.

By construction, $\check X$ admits a measurable partition into local unstable
disks. 
Consider a flow box $P$ of positive measure in $\check X$, that is, a
sub-lamination of one of the sets $\check{E}_k$ of Theorem 
\ref{thm:tautological}, made up of disks of size $\qq$.  We denote by $p$ the
generic point of $P$, and by 
$\xi$  the natural partition of $P$ by disks, so that $\xi(p)$ is the disk
through $p$. 

From the discussion following Definition \ref{def:dist}, we know that every $\qq$-distinguished 
history of $p\in P$ comes along with a sequence of inverse 
branches of $\xi(p)$. Reducing the size of $P$ if necessary, the set of
$\qq$-distinguished inverse branches  of the disks of $P$ 
forms a set of positive measure (a flow box) in $\hat X$, that we will denote by
$\hat P$ and which is naturally laminated by unstable manifolds. We denote by
$\hat\xi^u$ the partition of $\hat P$ into unstable disks. 
By transitivity of the conditional expectation, the conditionals induced by
$\hat\mu$ on the atoms $\hat\xi$ are also induced by 
$\hat\mu_{\hat P}= \unsur{\hat\mu(\hat P)}\hat\mu\rest{\hat P}$. 
Since we can exhaust $\hat X$ up to a set of arbitrarily small measure with flow
boxes, it will be enough to understand the conditionals on $\hat P$.

\medskip

The partition  $\xi$ induces a  (coarse) partition $\hat{\xi}_0 = \pi_0^{-1}(\xi)$
on $\hat P$, defined by $\hat{\xi}_0(\hat p) = \pi_0^{-1}\xi(\pi_{0}(\hat p))$.
Consider an atom $D$ of the partition $\xi$ on $\check X$ and look at the part
of $\check f^{-n}(D)$ corresponding to the branches belonging to $\hat P$. This
is a union of univalent inverse branches of $\check f^n$, so it inherits a
natural finite partition 
into inverse branches. We can reformulate this as follows: given an atom $C\in
\hat\xi_0$, 
$\pi_{-n}(C)$ admits a partition into finitely many pieces corresponding to the
inverse branches of $\check f^n$ along $\pi_0(C)$. This induces  a refinement of
$\hat\xi_{0}$ that we denote by  $\hat\xi_{-n}$ (``separating inverse branches
of order $n$'', see figure \ref{figure:partition}). It is clear that $\hat\xi_{-n}$ is an increasing sequence of
partitions such that
$\bigvee_{n=0}^\infty \hat\xi_{-n} = \hat \xi^u$, the partition of $\hat P$ into
unstable leaves, up to a set of zero measure.

\begin{figure}[h]
\begin{center}
\psfrag{=0}{$=\hat\xi_0$}
\psfrag{=1}{$=\hat\xi_{-1}$}
\psfrag{=2}{$=\hat\xi_{-2}$}
\psfrag{D}{$D$}
\psfrag{f}{$f$}
\psfrag{hatP}{$\hat P$}
\psfrag{pizero}{$\pi_0$}
\includegraphics[scale=0.5]{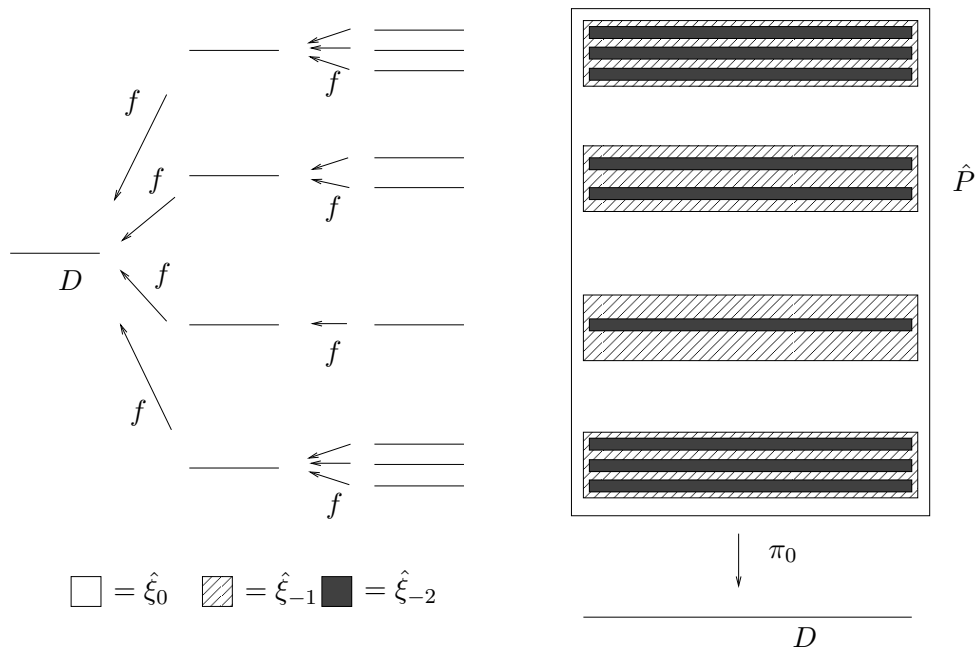}
\caption{Construction of $\hat\xi^u$. Inverse branches of $D$ 
in $\hat P$ are depicted on the left, and the inductive construction of the 
 partition on the right}\label{figure:partition}
\end{center}
\end{figure}

\medskip

By Theorem \ref{thm:tautological}, the conditionals of $\check\mu$ on the atoms
of $\xi$ are induced by $T^+$, i.e.
for a.e. $p$, $\check\mu_{\xi(p)} = c(p) (T^+\geom [\xi(p)])$.
Consider the conditionals induced by $\hat\mu$ on $\hat{\xi}_0$.  Applying the
projection lemma \ref{lem:project}
with $(\tilde Y,\nu)= (\hat X, \hat\mu)$ and the projection $\pi_{0}$, we get
that for almost every atom $C$ of 
$\hat\xi_{0}$, $(\pi_{0})_{*}(\hat\mu_{C})=\check\mu_{\pi_{0}(C)}$.

Consider now the disintegration of $\hat\mu$ relative to the refined partition
$\hat\xi_{-n}$. We will prove that for every $n$, if $C$ is a 
generic atom of $\hat\xi_{-n}$, $(\pi_0)_* \hat\mu_C = \check\mu_{\pi_0(C)}$.
Let us first see why this implies the statement of the theorem.
Recall that $\hat\xi_{-n}$ increases to $\hat\xi^u$. Hence for $\hat\mu$ a.e.
$\hat p$ and every measurable function $\psi$, $\hat \mu_{\hat\xi_{-n}(\hat
p)}(\psi)\cv \hat \mu_{\hat\xi^u(\hat p)}(\psi)$ (this is the Martingale
Convergence Theorem, see \cite{PU}). Now if $\psi$ is of the form
$\varphi\circ\pi_0$, the statement that   $(\pi_0)_* \hat\mu_{\hat\xi_{-n}(\hat
p )} = \check\mu_{\xi(\pi_0(\hat p))}$, implies that  for every $n$, $\hat
\mu_{\hat\xi_{-n}(\hat p)}(\psi) = \check\mu_{\xi(\pi_0(\hat p))} (\varphi)$ is
independent of  $n$. So we get that 
$\hat\mu_{\hat\xi(\hat p )}(\psi) = \check\mu_{\xi(\pi_0(\hat p))} (\varphi)$. In
other words, 
$(\pi_0)_*\hat\mu_{\hat\xi^u(\hat p)} = \check\mu_{\xi(\pi_0(\hat p))}$. But now
$\pi_0$ is a measurable isomorphism $\pi_0:\hat\xi^u(\hat p)\cv\xi(\pi_0(\hat p))$ and
$\check\mu_{\xi(\pi_0(\hat p))}$ is induced by $T^+$, so the proof is finished.

\medskip

It remains to prove our claim that if $C$ is a 
generic atom of $\hat\xi_{-n}$, $(\pi_0)_* \hat\mu_C = \check\mu_{\pi_0(C)}$.
For this, denote by $D$ the disk $D=\pi_{0}(C)$ and notice that
$C=(\pi_{n})^{-1}(D_{-n})$, 
where $D_{-n}$ is the image of $D$ by some inverse branch of  $f^{n}$. By 
Lemma \ref{lem:project} applied to $\pi_{-n}$, we get that 
$(\pi_{-n})_*\hat\mu_C = \check\mu_{D_{-n}}$. Since $\pi_0 = f^n\circ\pi_{-n}$,
we thus obtain that $(\pi_0)_*\hat\mu_C=(f^n)_* \check\mu_{D_{-n}}$. Now we know
that  the conditional  $\check\mu_{D_{-n}}$ is induced by $T^+$, 
and by the invariance property of $T^+$ we have that 
$T^+\wedge [D]= \lambda_1^n (f^n)_*(T^+\wedge [D_{-n}])$. After normalization, we
conclude that $(f^n)_* \check\mu_{D_{-n}} = \check\mu_D$ and the result is
proved.
\end{proof}

We can now compute the entropy, using the Rokhlin formula and the invariant
``Pesin partition'' of \cite{ls}, as in \cite{birat}. 
See \cite{bls} for a nice presentation of the material needed here.
The partition $\xi$ is said to be {\em $f^{-1}$-invariant} if $f^{-1}\xi$ is a
refinement
of $\xi$, i.e. for every $x$, $f^{-1}(\xi(f(x)))\subset \xi(x)$. It is
generating if $\bigvee_{n\geq 0} f^{-n}\xi$ is the partition into points; such
partitions allow the computation of entropy.

\begin{pro}[\cite{ls}]
 There exists a measurable $\hat f^{-1}$-invariant and generating partition of
$\hat X$, whose atoms are open subsets of local unstable manifolds a.s. 
\end{pro}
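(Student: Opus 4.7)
The plan is to implement the classical Ledrappier--Strelcyn construction of a subordinate partition \cite{ls}, using the disks subordinate to $T^-$ as substitutes for Pesin unstable manifolds. All the geometric ingredients needed---local unstable plaques of uniformly bounded geometry laminating a set of positive $\hat\mu$-measure---are provided by the tautological extension of \S \ref{sec:tautological} together with the identification of $\hat X$ with the natural extension of $\check X$ (Proposition \ref{prop:hatcheck}).

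First I would select a flow box $\hat P \subset \hat X$ of positive $\hat\mu$-measure laminated by local unstable plaques of uniform geometry: take a flow box inside one of the sets $\check E_i$ of Theorem \ref{thm:tautological} and pull it back along the projection onto the $0^{\rm th}$ factor. Next I would define the coarse partition $\xi_0$ whose atoms are the individual unstable plaques of $\hat P$, together with the single atom $\hat X \setminus \hat P$, and set
\[
\xi \;=\; \bigvee_{n \geq 0} \hat f^{\,n}\,\xi_0.
\]
The atom of $\xi$ through $\hat p$ consists of those $\hat q$ whose backward orbit shadows that of $\hat p$ with respect to $\xi_0$: whenever $\hat f^{-n}\hat p$ lies in $\hat P$, the point $\hat f^{-n}\hat q$ must lie in the same plaque.

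The three required properties would be verified as follows. For $\hat f^{-1}$-invariance, a direct manipulation of joins gives $\hat f^{-1}\xi = \xi \vee \hat f^{-1}\xi_0 \geq \xi$. For the claim that $\hat\mu$-a.e. atom is an open subset of a local unstable manifold, observe that each term $\hat f^{\,n}(\xi_0(\hat f^{-n}\hat p))$ is open in $W^u_{\rm loc}(\hat p)$, since $\hat f^{\,n}$ maps unstable plaques diffeomorphically onto open pieces of unstable plaques; Poincar\'e recurrence ensures that the backward orbit of a typical $\hat p$ visits $\hat P$ infinitely often, so the intersection is a nonempty open subset of a single leaf.

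The main difficulty is the generating property $\bigvee_{n\geq 0}\hat f^{-n}\xi = \bigvee_{k \in \mathbb Z}\hat f^{\,k}\xi_0 = \varepsilon$, which must separate any two distinct points $\hat p \neq \hat q$. If $\hat p$ and $\hat q$ lie on different local unstable leaves, their histories eventually differ and negative iterates of $\xi_0$ separate them via the plaques of $\hat P$. If they lie on the same leaf, one uses the exponential contraction of distinguished inverse branches from Proposition \ref{prop:positive}, combined with the uniformly bounded geometry of plaques in $\hat P$ and infinitely many backward returns to $\hat P$, to show that the diameters of the atoms of $\xi$ restricted to that leaf shrink to zero. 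This step is the one that genuinely uses our specific construction of $T^-$ via strongly approximable currents rather than a black-box Pesin theory; given it, the remainder of the argument is a direct adaptation of \cite{ls} as presented in \cite{bls}.
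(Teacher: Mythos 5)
Your overall route is the same one the paper takes---both amount to running the Ledrappier--Strelcyn construction on flow boxes of distinguished histories---but there is a genuine gap at the step you treat as easy, and it is exactly the step that constitutes the heart of \cite{ls}. You assert that the atom $\xi(\hat p)=\bigcap_{n\geq 0}\hat f^{\,n}\bigl(\xi_0(\hat f^{-n}\hat p)\bigr)$ is an open subset of the unstable leaf because each term is relatively open and the backward orbit returns to $\hat P$ infinitely often. An infinite intersection of relatively open sets need not be open, and nothing in your argument prevents the cuts $\hat f^{\,n}(\partial D_{-n})$, where $D_{-n}$ is the plaque of $\hat f^{-n}\hat p$, from accumulating on $\hat p$: if the backward orbit approaches the boundary of its plaque faster than the backward contraction rate $C(\e)\,n\,\lambda_1^{-n/2}$ of Proposition \ref{prop:positive} pushes those cuts away from $\hat p$, the atom degenerates, and then the conditional measures induced by $T^+$ (which is what the entropy computation needs, via $\rho(x)=\m\bigl(T^+\geom[\hat\xi^u(x)]\bigr)>0$) become meaningless. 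Ruling this out is precisely \cite[Prop.~3.1]{ls}: one chooses the box, or a decreasing sequence of plaque sizes, so that a Borel--Cantelli estimate guarantees that for a.e.\ $\hat p$ the leafwise distance from $\hat f^{-n}\hat p$ to the boundary decays at most subexponentially, which the exponential contraction then beats. The paper's proof consists exactly in verifying the hypotheses needed for this (uniformly bounded geometry and uniform exponential backward contraction of the plaques on the flow boxes $\hat P$ of $\qq$-distinguished histories, which cover $\hat X$ up to a null set) and importing the construction from \cite{ls}, with \cite{qian-zhu} cited for the non-invertible adaptation.

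By contrast, the generating property that you single out as the main difficulty is comparatively soft: if $\hat q\in\bigcap_{n\geq 0}\hat f^{-n}\xi(\hat f^{\,n}\hat p)$, then at every forward return time ($\hat f^{\,n}\hat p\in\hat P$) the points $\hat f^{\,n}\hat p$ and $\hat f^{\,n}\hat q$ lie on one plaque, and applying the distinguished inverse branches gives, for each fixed $k$, $d(x_{-k},y_{-k})\leq C\,(n+k)\,\lambda_1^{-(n+k)/2}\to 0$, so $\hat p=\hat q$; no separate case analysis for ``different leaves'' is needed (uniqueness of the unstable disk along a.e.\ history being Proposition \ref{lem:separ}). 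One smaller point: with a single flow box $\hat P$ you need ergodicity of $\hat\mu$, not just Poincar\'e recurrence, to know that a.e.\ point of $\hat X$ visits $\hat P$ backward---otherwise the atom of a never-returning point is the complement atom, which is not contained in any leaf; the paper avoids this by using a countable family of such flow boxes covering $\hat X$ mod $0$.
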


\begin{proof}
The proposition is stated in the context of  Pesin's theory applied to
diffeomorphisms of manifolds in \cite{ls}, but it is well adapted to our
situation. The exact requirements are listed in \cite[Prop. 3.3]{ls}.
What is needed is a family of local manifolds $V_{\rm loc}(x)$, and a set 
set $\Lambda_\ell$ of measure $\geq 1-\e(\ell)$ such that 
\begin{itemize}
 \item[-] for $x\in \Lambda_\ell$,  the manifolds $V_{\rm loc}(x)$ have
uniformly bounded geometry, and move continuously;
 \item[-] $\hat f^{-n}$ is uniformly exponentially contracting on  $V_{\rm
loc}(x)$, $x\in \Lambda_\ell$.
\end{itemize}

In our situation, we know that $\check X$ can be written as a countable union of
flow boxes as follows: first write $\check X$ as a countable union of flow boxes
$P$. Then write $\pi_0^{-1}(X)$ as a countable union of flow boxes, up to a set
of zero measure as follows:
 consider the increasing sequence of subdivisions
$\qq_i$, subdivide $P$ into smaller flow boxes of size $\qq_i$, and consider the
$\qq_i$-distinguished histories  of the smaller flow boxes. 

Now if $P$ is a flow box of size $\qq$ and $\hat P$ is the set of its 
$\qq$-distinguished histories, 
then $\hat P$ is naturally a flow box of $\hat X$, where the plaques are
unstable manifolds and the transversal is the set of $\qq$-distinguished
histories of a transversal of $P$. Moreover, the dynamics of $\hat f^{-n}$ is
uniformly exponentially contracting along the leaves.
We leave the reader check that \cite[Prop. 3.1]{ls} can now be adapted easily
--see also \cite{qian-zhu} for an 
 adaptation of \cite{ls} to a non-invertible situation, using Pesin's theory. 
\end{proof}

\begin{cor}
 $h(f,\mu) = \log\lambda_1$.
\end{cor}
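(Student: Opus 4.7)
The plan is to apply Rokhlin's entropy formula in the natural extension $(\hat X,\hat\mu,\hat f)$, using the Pesin partition $\eta$ just constructed together with the $T^+$-description of unstable conditionals (Theorem \ref{thm:conditional}) and the cohomological identity $f^*T^+=\lambda_1 T^+$. Since $\eta$ is a measurable, $\hat f^{-1}$-invariant, generating partition of the invertible system $\hat f$, and since natural extensions preserve entropy, the task reduces to computing
\begin{equation*}
h(f,\mu) = h(\hat f,\hat\mu) = H(\hat f^{-1}\eta\,|\,\eta) = -\int \log \hat\mu_{\eta(\hat x)}\bigl((\hat f^{-1}\eta)(\hat x)\bigr)\,d\hat\mu(\hat x).
\end{equation*}

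I would then identify the integrand geometrically. By Theorem \ref{thm:conditional}, the conditional $\hat\mu_{\eta(\hat x)}$ is the normalization of $T^+\geom D$ transported via $\pi_0$, where $D:=\pi_0\eta(\hat x)$ is the local unstable disk through $x_0$. The refined atom $(\hat f^{-1}\eta)(\hat x)=\hat f^{-1}(\eta(\hat f\hat x))$ corresponds via $\pi_0$ to $D_0:=D\cap f^{-1}(\tilde D)$, where $\tilde D:=\pi_0\eta(\hat f\hat x)$; the $\hat f^{-1}$-invariance of $\eta$ forces the inclusion $\tilde D\subset f(D)$, so $f|_{D_0}\colon D_0\to\tilde D$ is a biholomorphism off a set of measure zero. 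The identity $f^*T^+=\lambda_1 T^+$ then yields $(T^+\geom D)(D_0)=\lambda_1^{-1}(T^+\geom\tilde D)(\tilde D)$, hence
\begin{equation*}
\hat\mu_{\eta(\hat x)}\bigl((\hat f^{-1}\eta)(\hat x)\bigr) = \frac{1}{\lambda_1}\cdot\frac{(T^+\geom\tilde D)(\tilde D)}{(T^+\geom D)(D)}.
\end{equation*}

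Setting $g(\hat x):=\log(T^+\geom \pi_0\eta(\hat x))(\pi_0\eta(\hat x))$, we have $g(\hat f\hat x)=\log(T^+\geom\tilde D)(\tilde D)$. Taking $-\log$ of the previous identity and integrating against $\hat\mu$, the two $g$-terms cancel by $\hat f$-invariance of $\hat\mu$, yielding $H(\hat f^{-1}\eta\,|\,\eta)=\log\lambda_1$. Combined with Gromov's inequality $h_{\rm top}(f)\leq\log\lambda_1$ \cite{gromov}, this proves that $\mu$ is a measure of maximal entropy $\log\lambda_1$.

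The main obstacle is the $\hat\mu$-integrability of $g$: the atoms of $\eta$ can have arbitrarily small $T^+$-mass, so the cancellation invoked above is not automatic. I would address this by restricting to the Pesin sets $\Lambda_\ell$ on which the atoms have bounded geometry (hence $T^+$-masses bounded away from $0$ and $\infty$), computing the conditional entropy there, and passing to the limit $\ell\to\infty$ with standard estimates on the boundary contribution, as in \cite{ls, birat}.
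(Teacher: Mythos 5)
Your proposal follows essentially the same route as the paper: pass to the natural extension, use the Ledrappier--Strelcyn invariant generating partition into pieces of unstable manifolds, apply the Rokhlin formula, identify the unstable Jacobian via Theorem \ref{thm:conditional} and the relation $f^*T^+=\lambda_1 T^+$, and conclude maximality from Gromov's bound $h_{\rm top}(f)\le\log\lambda_1$ (which, as the paper notes, is also what guarantees the finiteness needed to equate $h_{\hat\mu}(\hat f)$ with the conditional entropy). The only real divergence is the last step, which you correctly flag as the main obstacle but resolve more vaguely than the paper does. The paper's fix is a one-liner you do not mention: since the conditional measure of a subset of an atom is at most $1$ (equivalently $J^u_{\hat\mu}\ge 1$, a consequence of the $\hat f^{-1}$-invariance of the partition), the additive coboundary $\log\rho-\log\rho\circ\hat f$ is bounded \emph{below} by $-\log\lambda_1$, and a standard lemma (cf.\ \cite[Prop.~3.2]{bls}, via Birkhoff) then gives $\int(\log\rho-\log\rho\circ\hat f)\,d\hat\mu=0$ even though $\log\rho$ need not be integrable. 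Your alternative plan --- restricting to Pesin sets $\Lambda_\ell$ where $\rho$ is bounded away from $0$ and $\infty$ and ``passing to the limit with boundary estimates'' --- is not yet a proof: conditional entropy does not truncate naively (the atom at $\hat x$ involves $\rho(\hat f\hat x)$, which is uncontrolled when $\hat f\hat x\notin\Lambda_\ell$), and without some one-sided bound on the coboundary the cancellation $\int(g-g\circ\hat f)=0$ can genuinely fail for merely measurable $g$. So either import the bounded-below observation, after which the truncation becomes unnecessary, or be prepared to carry out the Ledrappier--Strelcyn-type limit argument in detail.
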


\begin{proof}
The proof now follows  from a classical argument, which we include for the reader's convenience.
We compute the entropy in the
natural extension. Let $\hat \xi^u$ be the invariant partition constructed
above. Since 
 $\hat \xi^u$ is a generator and $h_{\hat\mu} (\hat f)\leq \log\lambda_1$ is
finite, 
$h_{\hat\mu} (\hat f)$ equals the conditional entropy $h_{\hat\mu} (\hat f,
\hat\xi^u)$. Now the 
conditional entropy may computed by the  Rokhlin formula:
  $$ h_{\hat\mu} (\hat f, \hat \xi^u)=
  -\int \log \mu_{\hat\xi^u(x)}(\hat f^{-1}\hat \xi^u(x) )) d\hat\mu (x) =
 \int \log J^u_{\hat\mu} (x)d\hat\mu(x), $$
 where $J_{\hat\mu}^u(x):= \left({\hat\mu_{\hat\xi^u(x)} \big( \hat
f^{-1}(\hat\xi^u
(\hat f(x)))\big)}\right)^{-1}$ is the unstable Jacobian.

Since the $\hat\xi^u(x)$ are open subsets of unstable manifolds in $\hat X$, by
Theorem \ref{thm:conditional}
the conditionals  $\hat\mu_{\hat\xi^u(x)}$ are induced by $T^+$, that is, with
the usual abuse of notation, 
$$\hat\mu_{\hat\xi^u(x)} =
 \frac{T^+\geom[\hat \xi^u(x)]}{\m\big(T^+\geom[\hat \xi^u(x)]\big)}. $$
 From the invariance relation $f^* T^+ = \lambda_1 T^+$ 
we deduce that
$$T^+\geom[\hat f^{-1}(\hat \xi^u(\hat f(x)))]= \big(T^+\geom [\hat
\xi^u(x)]\big)\big|_{ 
\hat f^{-1}(\hat\xi^u(\hat f(x)))} = \unsur{\lambda_1}\hat f^*(T^+\geom [\hat
\xi^u(\hat f(x))]),$$ hence 
the unstable Jacobian $J^u_{\hat\mu}$ satisfies the multiplicative
cohomological equation 
$$ J^u_{\hat\mu} (x) = \lambda_1 \frac{\rho(x)}{\rho(\hat f(x))} \text{ a.e. , 
where }
\rho(x) = \m\big(T^+\geom [\hat\xi^u(x)]\big).$$
To prove that the integral of $\log J^u_{\hat\mu}$ equals $\log\lambda_1$, we
want to 
use Birkhoff's Ergodic Theorem. The (additive) coboundary
$\log \rho(x) - \log \rho(\hat f(x))$ needn't be integrable, nevertheless by the
invariance of the partition it is bounded from below (see
\cite[Proposition 3.2]{bls} for more details). So  Birkhoff's  Theorem applies
and we conclude that $h_{\hat\mu} (\hat f)=\log\lambda_1$.
\end{proof}

\section{Product structure}\label{sec:product}

We say that a measure has local product structure with respect to stable and unstable manifolds if there is a covering by product subsets of positive measure, in which the vertical (resp. horizontal) fibers are exponentially contracted (resp. expanded) by the dynamics, and the measure is isomorphic to a product measure in each of these subsets.
This property is known to have strong ergodic consequences, like the K and Bernoulli property (see \cite{ow} for more details).

\begin{thm}\label{thm:product}
 The measure $\hat\mu$ has local product structure with respect to local stable
and unstable manifolds in $\hat X$.
\end{thm}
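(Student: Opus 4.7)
The plan is to transfer the local product structure of the wedge product $\mu = T^+\wedge T^-$ on $X$, which is geometric by construction, up to the natural extension $\hat X$, using Theorem \ref{thm:conditional} to identify the unstable factor and Proposition \ref{lem:separ} to separate histories.

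First I would fix a $\qq$-distinguished history $\hat p = (x_n)\in \hat X$ with $x_0\in \supp(T^+_\qq\geom T^-_\qq)$ and build a bidisk-like product neighborhood of $x_0$ in $X$. On the $T^+$-side, the laminar structure of $T^+_\qq$ gives a flow box $\el^+$ of stable disks parametrized by a transversal $\tau^+$ with transverse measure $\nu^+$. On the $T^-$-side, the distinguished local unstable disk $D=D(\hat p)$ of $T^-_\qq$ passes through $x_0$. Then I would apply Theorem \ref{thm:cont} (analytic continuation), with $\mathcal{W}$ chosen as a smooth web of disks transverse to $\el^+$ that contains $D$, to obtain a uniformly woven current $T^-\rest{\mathcal{W}}$ whose disks $\{D_t\}_{t\in\tau^-}$ sweep across $\el^+$ with transverse measure $\nu^-$ inherited from $T^-_\qq$. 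This gives coordinates $(t^+,t^-)\in\tau^+\times\tau^-$ on a neighborhood $U\ni x_0$.

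Second, I would use the fact that the intersection of $T^+$ and $T^-$ is geometric (as recalled in \S\ref{subs:geom}) to conclude that
$$
\mu\rest{U} \;=\; \int_{\tau^+\times\tau^-}\!\![L^+_{t^+}]\geom [D_{t^-}]\; d(\nu^+\otimes\nu^-)(t^+,t^-),
$$
up to an error of $\mu$-measure less than $\e$ coming from non-product parts. Since stable plaques meet unstable plaques in single transverse points for generic parameters, the integrand is a unit Dirac mass at the corresponding intersection, so in the coordinates $(t^+,t^-)$ the measure $\mu\rest{U}$ is (essentially) the product $\nu^+\otimes\nu^-$. This is the desired product structure on $X$.

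Third, I would lift to $\hat X$ by setting $\hat P \subset \hat X$ to be the set of distinguished histories $\hat q$ with $q_0\in U$. By Proposition \ref{lem:separ} and Remark \ref{rmk:separ}, $\hat P$ is partitioned by the local unstable manifolds $W^u_{\rm loc}(\hat q) = \check D(\hat q)$, which project injectively via $\pi_0$ onto the plaques $D_{t^-(\hat q)}$. The transverse (stable) direction in $\hat P$ is obtained by lifting the stable plaques of $\el^+$: since stable plaques are contracted by $f$ in forward time, the history of a point on a stable plaque is uniquely determined, up to sets of $\hat\mu$-measure zero, by the history of $\hat p$. This yields a flow-box structure $\hat P \cong \tau^+\times\tau^-$ with unstable leaves $\check\el^u$ and stable leaves $\check\el^s$, both laminations being measure-theoretic in nature. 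The last step is to identify $\hat\mu\rest{\hat P}$ with a product measure. By Theorem \ref{thm:conditional}, the unstable-direction conditional of $\hat\mu$ on $W^u_{\rm loc}(\hat q)$ is the push-forward of $T^+\geom [D_{t^-(\hat q)}]$, which in our parametrization is a constant multiple of $\nu^+$, independently of $t^-$. Combining this with the disintegration formula for $\hat\mu$ over the unstable partition and pushing down via $\pi_0$ to $\mu\rest{U}$—already known to be $\nu^+\otimes\nu^-$ from step two—forces the transverse measure to be $\nu^-$. Hence $\hat\mu\rest{\hat P}\cong \nu^+\otimes\nu^-$, establishing local product structure.

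The main obstacle is the first paragraph: producing a genuine web of unstable disks transverse to the stable flow box with the correct transverse measure. This is where the analytic continuation theorem is essential, since a priori the woven structure of $T^-$ is adapted only to the two linear projections used to define it, not to the stable lamination of $T^+$. The extra averaging step introduced in the construction of $T^-_\qq$ in Proposition \ref{prop:positive}, together with Theorem \ref{thm:cont}, is what makes this propagation along stable plaques possible.
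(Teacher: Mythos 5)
Your plan founders at the lifting step, and the place where it fails is exactly the difficulty the paper's proof is organized around. You assert that ``since stable plaques are contracted by $f$ in forward time, the history of a point on a stable plaque is uniquely determined, up to sets of $\hat\mu$-measure zero, by the history of $\hat p$.'' Forward contraction gives no control on backward branching: a point $q_0\in W^s_{\rm loc}(x_0)$ may have several preimages carrying $\hat\mu$-mass at every step, and its distinguished histories need not shadow those of $\hat p$ at all. Your claim amounts to assuming that the correspondence $\hat x\mapsto (x_0,D(\hat x))$ is essentially injective, i.e.\ that unstable manifolds ``fully depend on histories'' -- precisely the situation (noted right after Proposition \ref{prop:hatcheck}) in which Step 2 of the theorem is automatic, and which is \emph{not} known in general; Example \ref{ex:henon} already shows the preimage structure along $\mu$ can be badly asymmetric. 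Consequently the stable leaf in $\hat X$ must be taken to be the \emph{full} preimage $\pi_0^{-1}(W^s_{\rm loc})\cap\hat P$, not a single lifted copy of the plaque, the space of unstable leaves of $\hat P$ is the set $\hat\tau^u$ of distinguished histories over $\tau^u$ (a many-to-one cover of $\tau^u$), and your final step -- ``pushing down via $\pi_0$ forces the transverse measure to be $\nu^-$'' -- is not meaningful: the transverse factor is a measure on $\hat\tau^u$ that only pushes forward to $\nu^-$, and no explicit description of it (equivalently, of the stable conditionals of $\hat\mu$) is available. Proving that these stable conditionals are nonetheless holonomy-invariant is the hard content of the paper's proof: it goes through the reinforced form of Step 1 (conditionals of $\check\mu$ on measurable transversals of an unstable flow box are given by the transverse measure of $T^-\rest{\check\el^u}$), the pulled-back partitions $\check f^{-n}\check D^s$ of $\pi_{-n}(\hat P)$, and a limiting argument over the $\sigma$-algebras generated by the projections $\pi_{-n}$, none of which appears in your sketch.

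Two further points. First, you work with $\mu$ on $X$ directly; since $T^-$ is only woven, its disks overlap, so ``coordinates $(t^+,t^-)$ on a neighborhood $U$'' do not exist on $X$ -- this is why the paper carries out Step 1 in the tautological extension $\check X$, where the disks are separated, and obtains an \emph{exact} identity $\check\mu\rest{\check\el^u\cap\check\el^s}=T^+\rest{\check\el^s}\geom T^-\rest{\check\el^u}$ by a two-sided domination argument; a statement ``up to an error of measure $<\e$'' is not enough, because product structure must hold exactly on each piece of the covering. Second, there is a partial salvage of your idea: once one knows (a) that $\hat P\simeq\tau^s\times\hat\tau^u$ is a genuine product set (the paper proves this using injectivity of $\pi_0$ on lifted unstable disks and the full-preimage definition of stable leaves), and (b) that the unstable conditionals of $\hat\mu$, transported by stable holonomy, are all equal to the normalized $T^+\geom D$ measure (Theorem \ref{thm:conditional} together with the product structure of $\check\mu$ from Step 1), then disintegrating over the unstable partition already exhibits $\hat\mu\rest{\hat P}$ as a product of $\nu^+$ with the \emph{quotient} measure on $\hat\tau^u$. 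That route is close in spirit to what you attempt, but it still requires $\check X$, the analytic continuation theorems, and the correct definition of the stable leaves; and it does not identify the transverse measure with $\nu^-$, nor does it replace the paper's analysis of the stable conditionals, which is what your write-up implicitly claims to have done.
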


\begin{proof} The proof is in two steps.  We first show that $\check\mu$ has local product structure. This is actually
a statement about the geometry of the currents $T^\pm$.  Next we pass to the natural extension.  What is delicate in this step is 
to analyze the stable conditionals. Observe also  that when unstable manifolds fully depend 
on histories, the second step is automatic.

\medskip

\noindent {\bf Step 1.} We begin by identifying product subsets in $\check X$.  We let $\check \el^u$ denote a ``flow box for $T^-$'', that is, a sublamination  of a flow box in $\check X$.  We may write
$\check \el^u  = \bigcup_{D^u \in\tau^u} D^u$, where $D^u$ are local unstable disks and $\tau^u$ is an (abstract) transversal.  Now suppose that $D_0^s$ is a disk subordinate to  $T^+$ in $X$ and that $D_0^s$ meets some disk 
in $\check\el^u$ transversely in a single point.  By definition of subordinate disks, there is a flow box 
$\el^s = \bigcup_{D^s \in\tau^s} D^s$ of positive mass for $T^+$ made of disks $D^s$ close to $D^s_0$.  Restricting $\check\el^u$ if necessary, we may assume that each disk in $\el^s$ intersects each disk in $\check\el^u$ transversely in a single point. 

We lift leaves $D^s\in\tau^s$ to $\check \el^u$ by setting\footnote{This might look
quite cumbersome at first sight, but be careful that by definition 
the natural lifts of disks to the tautological extension never intersect! Here
we play with the distinction between total and proper transform: we intersect
the proper transform under $\check\pi$ of $D^u$ with the total transform of
$D^s$. We put a check on $\check D^s$ and not on $D^u$ to emphasize the fact
that these two objects do not have the same status.} 
$\check D^s := \check\pi^{-1}(D^s)\rest{\check \el^u}$ and $\check \el^s = \bigcup_{D^s\in \check \tau^s} \check D^s$.
It is clear that $\check \el^u\cap\check \el^s$ is a product set, homeomorphic to $\tau^s\times\tau^u$. For later convenience we identify the abstract transversal $\tau^s$ with a subset of some leaf $D^u\in\tau^u$ and likewise $\tau^u$ with a subset of of some $\check D^s$.
Since $T^+$ and $T^-$ intersect geometrically, there is a countable family of disjoint product sets $\tau^s\times \tau^u$ as above, whose union has full $\check\mu$ measure. 
\medskip

By the analytic continuation theorem for $T^+$ \cite[Theorem 1.1]{structure} we know that $T^+\rest{{\el^s}}$ is uniformly laminar.  Similarly, Theorem \ref{thm:cont} implies that $T^-\rest{\check\pi(\check \el^u)}$ is uniformly woven and can hence be lifted as a uniformly laminar current on the ``abstract'' lamination $\check \el^u$. Abusing notation, we denote this lift by $T^-\rest{\check \el^u}$.  Taking tautological extensions gives rise to a  natural product measure on $\check \el^u\cap\check \el^s$, 
which, abusing notation again, we denote by $T^+\rest{\check{\el^s}}\geom
T^-\rest{\check \el^u}$. To conclude that $\check\mu$ has local product
structure, it remains to prove that this product measure coincides with
$\check\mu\rest{\check \el^u\cap\check \el^s}$.

Recall from Theorem \ref{thm:tautological} that we have constructed $\check\mu$ as the increasing limit of the tautological extensions of $T^+\geom T^-_{\qq_i}$.  The measure $T^+\geom T^-_{\qq_i}$ is in turn defined as an increasing limit of $T^+_{\qq'}\geom T^-_{\qq_i}$. Therefore, the restriction $\check\mu\rest{\check
\el^u\cap\check \el^s}$ is an increasing limit of restrictions of the tautological extensions $(T^+_{\qq'}\geom
T^-_{\qq_i})\rest{\check \el^u\cap\check \el^s}$. By definition of the restriction of 
$T^{+/-}$ to $\el^{s/u}$, all these measures are dominated by
$T^+\rest{\check\el^s}\geom T^-\rest{\check \el^u}$. So we infer that 
$\check\mu\rest{\check \el^u\cap\check \el^s}\leq T^+\rest{\check\el^s}\geom
T^-\rest{\check \el^u}$. On the other hand 
$T^+\rest{\check\el^s}\geom T^-\rest{\check\el^u}$ is a measure supported on
${\check \el^u\cap\check \el^s}$ and dominated by $\check\mu$ so the converse
inequality is obvious.

\medskip

\noindent{\bf Step 1, reinforced.}
To pass to the natural extension we actually need a stronger version 
of the product structure of $\check\mu$, where stable and unstable pieces are
allowed to intersect many times. Consider a flow box $\check\el^u$ as
above, endowed with the abstract uniformly laminar current
$T^-\rest{\check\el^u}$, or equivalently, with an invariant transverse measure
$\mu^-$. Define a {\em measurable transversal} as a measurable set in
$\check\el^u$
intersecting each unstable leaf in $\check\el^u$ along a discrete set. The
transverse measure induces a positive measure $\mu^-_\tau$ on each measurable 
transversal $\tau$, invariant under the equivalence relation defined by the
unstable leaves (see \cite[p.102]{MS}). 
More precisely, if $\tau_1$ and $\tau_2$ are two measurable
transversals and $\phi:\tau_1\cv\tau_2$ is a measurable isomorphism preserving
the leaves, then $\phi_*  \mu^-_{\tau_1} = \mu^-_{\tau_2}$. 

Using this formalism and the geometric intersection of the currents, we have a
finer understanding of the stable conditionals. Consider a set  of positive
measure,  endowed with a partition $\xi^s$, such that each piece $\xi^s$ is
contained a countable union of disks subordinate to $T^+$. We can thus consider
the trace $\hat \xi^s$
of this partition on  $\check\el^u$, as done before. We further assume that each
atom of $\check\xi^s$ intersects the leaves of $\check\el^u$ along a 
discrete set. So it is a measurable transversal to $\check\el^u$. 
The conclusion is that the conditionals of $\check\mu$ (or equivalently, $\check\mu_{\check\el^u}$)
on the pieces $\check\xi^s$ are induced by the transverse measure $\mu^-$, i.e. 
$\check\mu_{\check\xi^s} = c\mu^-_{\check\xi^s}$, with $c$ a normalization
constant.

\medskip

\noindent{\bf Step 2.} As above we view $\hat X$ as the natural extension of
$\check X$, and denote by $(\pi_n)_{n\in \zz}$ the natural projections  $\hat
X\cv\check X$.
Consider a product set $P\simeq \tau^s\times\tau^u$ as defined above. For fine
enough $\qq$, the set of $\qq$-distinguished histories of points in $P$ has
positive $\hat \mu$ measure. Reducing $P$ if necessary, we can assume that $P$
is contained in a flow box of size $\qq$, and denote by 
$\hat P$ the set of $\qq$-distinguished histories with $x_0\in P$.
We will first prove that $\hat P$ is a product
set (see also Figure \ref{figure:tautological})
and next that $\hat\mu\rest{\hat P}$ has product structure.

\medskip

Denote by $\hat\tau^u$ the set of $\qq$-distinguished histories of points in
$\tau^u$.
Recall that  every $\qq$-distinguished history of $p\in P$ comes along
with a sequence of exponentially contracting inverse branches defined on the
disk of size $\qq$ on which $p$ sits. So for every $\hat p\in \hat\tau^u$ 
there exists a lift $\hat D^u(\hat p)$ of $D^u(\pi_0(\hat p))$ such that $\pi_0:
\hat D^u(\hat p)\cv D^u(\pi_0(\hat p))$ is an isomorphism.
Clearly, 
$\hat P \subset \bigcup_{ \hat p\in \hat\tau^u}\hat D^u(\hat p)$. 

On the other hand, for every 
$\hat p \in \hat P$, the local stable manifold of $\hat p$ is the full preimage
under $\pi_0$ of the local stable manifold of $\pi_0(p)$. More precisely, let
$\hat D^s(\hat p)=\pi_0^{-1}  (\check D^s (\pi_0(\hat p)))\cap \hat P$. 
The dynamics are exponentially contracting along the pieces $\hat D^s$. The set
of pieces is parameterized by $\tau^s$, or equivalently by the lift of $\tau^s$ to some unstable disk  
$\hat D^u$ (recall that $\tau^s$ is identified with a subset of some $D^u$).
Since $\pi_0$ is injective on $\hat D^u(\hat p)$ and $\pi_0(\hat D^u(\hat p)
\cap \hat D^s(\hat p))\subset \set{\pi_0(\hat p)}$, 
we infer that $\hat D^u(\hat p) \cap \hat D^s(\hat p) =\set{\hat p}$. We
conclude that $\hat P \simeq \tau^s\times\hat\tau^u$ is a product set.

\medskip

We now show that $\hat\mu$ is a product measure in $\hat P$. Since $\hat P$ is a
product, 
 we have two partitions $\hat D^s$ and $\hat D^u$, with a natural holonomy map
between stable (resp. unstable) pieces. We have to  prove that the
 conditionals induced by $\hat\mu$ on the stable (resp. unstable) pieces are
invariant under holonomy. 

This is easier for unstable conditionals since we know them explicitly. Indeed,
let $\hat D^u_i$, $i=1,2$ be unstable pieces, and 
$\hat h:\hat D^u_1 \cv \hat D^u_2$ be the holonomy map. Define the corresponding
objects  $D^u_i$ and $h$ in $P$ by projecting under $\pi_0$. 
Recall that $\pi_0$ is an isomorphism $\hat D^u_i\cv D^u_i$. We have proved in
Theorem \ref{thm:conditional} that $(\pi_0)_*\hat\mu_{\hat D^u_i} =
\check\mu_{D^u_i}$. In addition, by the product structure of $\check\mu$ in $P$,
we know that $h_* \check\mu_{D^u_1} = \check\mu_{D^u_2}$.  
Since $h\circ\pi_0 = \pi_0\circ\hat h$, we conclude that $\hat h_*\hat\mu_{\hat
D^u_1} = \hat\mu_{\hat D^u_2}$.

\medskip

Due to possible asymmetry between preimages, we cannot give an explicit description of the conditionals of $\hat \mu$ on the stable partition $\hat D^s$. We nevertheless have enough information to  prove holonomy invariance.  That is, if $\hat h:D^s_1\to D^s_2$ is the holonomy map between two stable pieces, we will show that $\hat h_*\hat\mu_{ \hat D^s_1}=\hat\mu_{\hat D^s_2}$.  Consider the measurable set $P_{-n} :=\pi_{-n}(\hat P)$.  We will use the following principle ``the conditionals $\hat\mu_{\hat D^s}$ are completely determined by the conditionals induced by $\check \mu$ on $\check f
^{-n} \check D^s\cap P_{-n}$, and the latter are invariant under holonomy''.

To make the argument more accessible, we first make the  following 
simplifying hypotheses:
\begin{enumerate}[(i)]
\item $\hat P = \pi_{-k}^{-1}(P_{-k})$ for some $k$; 
\item for every unstable piece $\hat D^u$ of the unstable partition of $\hat
P$, 
$\pi_{-n}(\hat D^u)$ is contained in a single flow box of $\check X$.
\end{enumerate}

Restricting our attention to $n\geq k$, we observe that assumption (i) implies $\hat P = \pi_{-n}^{-1}(P_{-n})$. From this and the defining properties of $\hat\mu$ we get that $(\pi_{-n})_*\hat\mu_{\hat P} = \check\mu_{P_{-n}}$.  Let $\check D^{s,-n}$ denote the  pullback partition $\check f^{-n}\check D^s$ of $P_{-n}$. Since $\pi_0 = f^n\circ\pi_{-n}$ we infer that for every $p\in P_{-n}$, we have $\pi_0^{-1} \check D^s(f^n(p))\cap \hat P = \pi_{-n}^{-1}(\check f^{-n}\check D^s(p)\cap P_{-n})$.  Or rather, in our notation, $\hat D^s=\pi_{-n}^{-1} (\check D^{s,-n})$.  Passing to the conditionals, by Lemma \ref{lem:project} we infer that for a.e. $\hat p$ (with $p_{-n} =\pi_{-n}(\hat p)$
\begin{equation}\label{eq:proj}
 (\pi_{-n})_*\hat \mu_{ \hat D^s(\hat p)} =  \check\mu_{\check D^{s,-n}
(p_{-n})}.
\end{equation}

We define the holonomy map (depending on $n$)
between pieces of $\check D^{s,-n}$ naturally as follows: let $\hat D^s_1$ and
$\hat D^s_2$ be two stable pieces and $\hat h$ be the holonomy map between them.
For $p_{-n} = \pi_{-n}(\hat p) \in \check D_1^{s,-n}$, let 
 $h(p_{-n}) = \pi_{-n}(\hat h(\hat p))\in \check D_2^{s,-n}$ (which is
independent of the choice of $\hat p$ 
mapping to $p_{-n}$). By (ii),
the points $p_{-n}$ and $h(p_{-n})$ 
belong to the same flow box of $\check X$, and of course correspond under
holonomy in this flow box.  

Let us prove that the conditionals $\check\mu_{\check D^{s,-n}}$ are invariant under $h$. It is enough to restrict $\check D^{s,-n}$ to some flow box $\check \el^u$. To simplify notation, we continue to denote the restriction by $\check D^{s,-n}$. As the trace in $\el^u$ of a holomorphic disk, $\check D^{s,-n}$ intersects unstable disks along discrete sets (the proof of Proposition \ref{lem:separ} shows that no open subset of $D^{s,-n}$ can be \emph{contained} in an unstable leaf). Hence we are in position to apply the
reinforced version of Step 1:  the conditionals $\check\mu_{\check D^{s,-n}}$ are induced by
the transverse measure associated to  $T^-\rest{\check\el^u}$, and by assumption
(ii), intersection points do not escape $\check\el^u$ by flowing under $h$; hence $h$ defines a measurable isomorphism $\check D_1^{s,-n}\cv \check D_2^{s,-n}$, respecting the leaves. It follows that the conditionals $\check\mu_{\check
D^{s,-n}}$ are invariant under $h$.

\medskip

From \eqref{eq:proj} and this discussion, we deduce for every $n$ and almost
every pair of atoms $\hat D^s_1$, $\hat D^s_2$, that $(\pi_{-n})_*\hat h_* 
\hat\mu_{\hat D^s_1} = (\pi_{-n})_*\hat\mu_{\hat D^s_2}$, where $\hat h$ is the
holonomy map $\hat D^s_1\cv \hat D^s_2$.  It follows that  $\hat h_* \hat\mu_{\hat D^s_1}=\hat\mu_{\hat
D^s_2}$. Indeed we have two measures $\hat h_* \hat\mu_{\hat D^s_1} $ and $\hat\mu_{\hat D^s_2}$ on $\hat D^s_2$,
agreeing on the $\sigma$-algebra $\mathcal{F}_n$ generated by sets of the form $\pi_{-n}^{-1}(A)$.  For every 
$A\subset \hat D^s_2$, we have $A=\bigcap_{n\geq k} \pi_{-n}^{-1}(\pi_{-n}(A))$.  Hence the smallest  $\sigma$-algebra containing all $\mathcal{F}_n$, $n\geq k$, is the Borel $\sigma$-algebra.  The assertion now follows from standard measure theory.

\medskip

What  remains now is to remove the simplifying assumptions. We will 
show that the simplifying assumptions are true ``up to subsets of small measure''.
The details are a bit intricate; we start
with a simple observation. 

\begin{lem}\label{lem:cvproba}
 Let $(A,\nu)$ be a probability space with a measurable partition $\xi$. Assume
that $B_n$ is a sequence of sets with $\nu(B_n)\cv 1$. 
Then $\nu_{\xi(p)}(B_n\cap \xi(p)) = 1-\e(p,n)$, with $\e(\cdot, n)\cv 0$ in
probability as $n\cv\infty$. In particular there is a subsequence such that
$\e(\cdot, n)\cv 0$ a.e.
\end{lem}

Consider the product set $\hat P$ as above, endowed with the partition $\hat
D^s$.  Since we do not necessarily have that $(\pi_{-n})_*\mu_{\hat P} = \mu_{P_n}$, we consider the sequence of sets 
$\pi_{-n}^{-1}(P_n) =\pi_{-n}^{-1} ( \pi_{-n}(P))$ decreasing to $\hat P$.  We let $E_n^s$ 
denote the partition $\pi_0^{-1}(\check D^s)$ of $\pi_{-n}^{-1}(P_n)$. For every $p\in \hat P$, the
sequence $E_n^s(\hat p)$ decreases to $\hat D^s(p)$,
so $\hat\mu_{\pi_{-n}^{-1}(P_n)}\rest{\hat P}$ is proportional to $\mu_{\hat
P}$.  In terms of the conditionals induced on $\pi_0^{-1}(\check D^s)$, this implies that 
\begin{equation} \label{eq:rest}
\mu_{E_n^s(\hat p)}\rest{\hat D^s(p)} = u(\hat p, n) \mu_{\hat D^s(\hat p)}
\end{equation}
where $u(\cdot, n)$ 
is constant on $\hat D^s(p)$ and increases to 1 a.e. With notation as
before, observe that the analogue of \eqref{eq:proj} is now 
\begin{equation} \label{eq:proj2}
 (\pi_{-n})_*\hat \mu_{ E_n^s(\hat p)} =  \check\mu_{\check D^{s,-n} (p_{-n})}.
\end{equation}

We face two problems regarding holonomy. First, $\hat h$ is not defined everywhere on $\pi_{-n}^{-1} P_n$.
Second, the holonomy $h$ is not defined everywhere in $P_{-n}$ because points can escape flow boxes. Let 
$\hat R$ be the set (depending on $n$) of points $\hat p \in \hat P$ such that $\pi_{-n}(\hat D^u(\hat p))$ is contained in a single flow box of $\check X$.  By construction, this is a product set. 
Furthermore, the diameters of the disks $\pi_{-n}(\hat D^u(\hat p))$ are bounded above by $C
n\lambda_1^{-n/2}$, with $C$ uniform in $\hat P$.  Thus as soon as
$\pi_{-n}(\hat p)$ has distance at least $C n\lambda_1^{-n/2}$ from the boundary of a flow
box (in the leafwise direction), we have $ D^u(\hat p) \subset \hat R$. Since $\check\mu$ concentrates no mass on the boundary, we conclude that the relative measure of $\hat R$ in $\hat P$ tends to 1 as $n\cv\infty$.  If $R_{-n} =
\pi_{-n}\hat R$, then the holonomy map $h$ along the leaves of $\check X$ is well
defined on $R_{-n}$ and preserves the conditionals induced by $\mu$ on 
the induced partition $D^{s,-n}\cap R_{-n}$.  The relationships among the sets we have introduced are summed up as follows:
$$
\pi_{-n}^{-1} P_n\supset \hat P \supset \hat R \subset \pi_{-n}^{-1}R_n\subset
\pi_{-n}^{-1} P_n;
$$ 
Moreover, $\hat h$ is well-defined on $\hat P$, $h$ is well-defined on  $R_n$, and $\pi_{-n} \circ
\hat h = h\circ \pi_{-n}$ on $\hat P\cap \pi_{-n}^{-1}R_n$. 
Using \eqref{eq:proj2} we deduce for a.e. $\hat p_1$, $\hat p_2$ that
\begin{equation} \label{eq:h}
h_*\left[\left((\pi_{-n})_*\hat \mu_{ E_n^s(\hat
p_1)}\right)\rest{R_{-n}}\right] = \left( (\pi_{-n})_*\hat \mu_{ E_n^s(\hat
p_2)}\right)\rest{R_{-n}}.
\end{equation}

Restricting measures does not commute with $\pi_{-n}$ but gives at least an 
inequality. Combining it with \eqref{eq:rest}, we find
$$
\left((\pi_{-n})_*\hat \mu_{ E_n^s(\hat p_2)}\right)\rest{R_{-n}}\geq 
(\pi_{-n})_* \left(\hat \mu_{ E_n^s(\hat p_2)}\rest{\hat P\cap
\pi_{-n}^{-1}R_n}\right) = u(\hat p_2, n)
(\pi_{-n})_* \left(\hat \mu_{\hat D^s(\hat
p_2)}\rest{\pi_{-n}^{-1}R_n}\right).
$$
The left side of the inequality is a measure of at most unit mass, whereas by Lemma \ref{lem:cvproba} the mass of the right side is of the form $1-\e(\hat p_2,n)$ (here $\e(\cdot, n)$ denotes a sequence of functions converging in probability to zero, possibly changing from line to line). So the right and left sides differ by a measure of mass of at most $\e(\hat p_2,n)$. By Lemma \ref{lem:cvproba} again, $\hat \mu_{\hat D^s(\hat
p_2)}\rest{\pi_{-n}^{-1}R_n}$ is close to $\hat \mu_{\hat D^s(\hat p_2)}$ in mass.  Therefore, finally we see that
\begin{equation} \label{eq:mass1}
\m \left(\left((\pi_{-n})_*\hat \mu_{ E_n^s(\hat p_2)}\right)\rest{R_{-n}} -
(\pi_{-n})_*\hat \mu_{\hat D^s(\hat p_2)} \right) = \e(\hat p_2,n).
\end{equation}
Since  
$\pi_{-n} \circ \hat h = h\circ \pi_{-n}$ on $\hat P\cap \pi_{-n}^{-1}R_n$ the
left side of \eqref{eq:h} similarly satisfies 
$$
h_*\left[\left((\pi_{-n})_*\hat \mu_{ E_n^s(\hat
p_1)}\right)\rest{R_{-n}}\right] \geq u(\hat p_1, n)
(\pi_{-n})_* \hat h_* \left(\hat \mu_{\hat D^s(\hat
p_1)}\rest{\pi_{-n}^{-1}R_n}\right).
$$
Applying the same reasoning on masses yields
\begin{equation} \label{eq:mass2}
\m \left(h_*\left[\left((\pi_{-n})_*\hat \mu_{ E_n^s(\hat
p_1)}\right)\rest{R_{-n}}\right] - (\pi_{-n})_*\hat h_*
\hat \mu_{\hat D^s(\hat p_1)} \right) = \e(\hat p_1,n).
\end{equation}
From \eqref{eq:mass1} and \eqref{eq:mass2} we conclude that 
$$
\m\left( (\pi_{-n})_*\hat h_* \hat \mu_{\hat D^s(\hat p_1)}  - 
(\pi_{-n})_*\hat \mu_{\hat D^s(\hat p_2)}\right) = 
\e(\hat p_1,n)+\e(\hat p_2,n).
$$ 
Consider as before the $\sigma$-algebra $\mathcal{F}_n$
generated by  $\pi_n$.  Given 
$A$ in $\mathcal{F}_n$, we get that $\hat h_* \hat \mu_{\hat D^s(\hat p_2)}(A) $
and $(\pi_{-n})_*\hat \mu_{\hat D^s(\hat p_2)}(A)$ differ by at most 
$\e(\hat p_1,n)+\e(\hat p_2,n)$. But since $A\in \mathcal{F}_m$ for all $m\geq
n$, we may pass to a subsequence and arrange for a.e. $\hat p_1$, $\hat p_2$ that the difference tends to zero.
Hence $\hat h_* \hat \mu_{\hat D^s(\hat p_1)}=\hat \mu_{\hat D^s(\hat p_2)}$, which is what we wanted to prove.
\end{proof}

\section{Saddle points}\label{sec:saddle}

In this section we prove Theorem \ref{thm:saddle}. The proof is more classical and follows \cite{bls2} closely. 

\medskip

\noindent {\bf Step 1: Pesin theory.}
As explained in the introduction, the proof relies on Pesin's theory of non-uniformly hyperbolic dynamical systems.
The applicability of this theory in our context only requires the assumption that
$p\mapsto \log d(p, I^+\cup C_f)\in L^1(\mu)$, as is neatly shown in \cite{dt-lyap}. 

By using the Osedelets theorem or the foregoing study, we have a set of full $\hat\mu$ measure 
provided with an invariant splitting of the tangent space $T_pX = E^u(\hat p)\oplus E^s(p)$ into an (exponentially) expanding direction and a contracting direction which depend measurably on $\hat p$. 
As indicated by the notation, $E^s(p)$  depends only on $p=\pi_0(\hat p)$. 
By Pesin Theory, there exists an invariant set $\hat{\mathcal{R}}\subset\hat X$ of full $\hat\mu$ measure such that
for each $\hat p\in \hat{\mathcal{R}}$, there exists {\em Lyapunov chart} $L(\hat p)$, which is a topological bidisk in $X$ (in the terminology of \cite{bls2}) centered at $p=\pi_0(\hat p)$. The bidisk can be chosen to be the image, under the Riemannian exponential map, of an affine bidisk with axes $E^u(\hat p)$, $E^s(p)$ and measurably varying size $r(\hat p)$.  We can further assume that $L(\hat p)$ does not intersect $I^+\cup C_f$ or $f(I^+\cup C_f)$ and that $f\rest{L(\hat p)}$ is injective.

The Lyapunov charts have the fundamental property that $f:L(\hat p)\cv L(\hat f\hat p)$ defines a H{\'e}non-like map of degree 1 (we use the terminology of \cite{hl}). That is, the cut-off image of a graph over the horizontal (i.e. unstable) direction, is a graph. This property is referred to in \cite{bls2} as the ``$u$-overflowing property'' of Lyapunov charts.  The branches of $f^{-1}$ have the overflowing property in the vertical (i.e. stable) direction.

We can also consider the sets 
$$
L_n^s(\hat p) := \set{y\in L(\hat p), \ \forall 1\leq j\leq n, \ f^j(y)\in L(\hat f^j\hat p)} 
\text{ and }
L_n^u(\hat p) := f^n L_n^s(\hat f^{-n} \hat p),
$$ 
which converge exponentially fast to the local stable manifold $W^s_{\rm loc}(p)$ and unstable manifold 
$W^u_{\rm loc}(\hat p)$, respectively.  Note that depending on the context, we will sometimes regard local stable and unstable manifolds as subsets of $X$ and sometimes as subsets of $\hat X$.

\medskip

We have shown in the proof of Theorem \ref{thm:complete}{\em i.} 
that disks subordinate to $T^+$ are $\mu$-a.e. exponentially contracted by $f$, while disks subordinate to $T^-$ are $\hat\mu$-a.e. exponentially contracted by distinguished preimages of $f$. On the other hand, the Pesin stable and unstable manifolds are unique.  Therefore, Pesin stable and unstable manifolds coincide a.e. with disks subordinate to $T^+$ and $T^-$.

\medskip

If $\hat p$ and $\hat q$ are sufficiently close in $\hat X$, then $W^s_{\rm loc}(p)\cap W^u_{\rm loc}(\hat p)$ is a single point classically denoted by $[\hat p,\hat q]$.  A subset is said to have product structure if it is closed under the operation $[\cdot, \cdot]$.  A {\em Pesin box} is a compact, positive measure subset of $\hat X$ with product structure and a positive lower bound on the size of the associated Lyapunov charts.

\medskip

\noindent {\bf Step 2: constructing saddle points.} The basic step in the argument is the following: if $g$ is a H\'enon-like map of degree 1 in some topological bidisk $B$, then $\bigcap_{k\in \zz} g^{k}(B)$ is a single saddle fixed point $q$ of $g$. Similarly $\bigcap_{k\geq 0} g^{k}(B) = W^u_B(p)$ and $\bigcap_{k\leq 0} g^{k}(B) = W^s_B(p)$.
Here we employ truncated iteration, in which points are omitted once they leave $B$.

\medskip

Fix $\e>0$. There exists a compact set $\hat{\mathcal{R}}_\e$ with $\hat\mu\big(\hat{\mathcal{R}}_\e\big)>1-\e$, and where all constants appearing above, as well as the stable and unstable directions and manifolds, vary continuously.  As argued in \cite[Lemma 1]{bls2} (stated in the context of polynomial automorphisms, but the proof extends without change to our situation), 
given $\eta>0$, there exist finitely many Pesin boxes, each of diameter smaller than $\eta$, covering $\hat{\mathcal{R}}_\e$. 
 Let $\hat P$ be one of these Pesin boxes. Since the stable and unstable directions of points belonging to $\hat P$ are almost parallel, if $\eta$ is sufficiently small, there exists a ``common Lyapunov chart'' $B$, which is a topological bidisk such that   $$\pi_0(\hat P) \subset B\subset \bigcap_{\hat p\in \hat P} L(\hat p).$$

Now if $n$ is large enough and $\hat p\in \hat P\cap \hat f^{-n}(\hat P)$ (of course there are infinitely many such $n$), $f^n$ induces a H\'enon-like map of degree 1 in $B$ that sends $p = \pi_0(\hat p)$ to $f^n(p)$.  We infer that $f^n$ has a saddle fixed point $q$ exponentially (in $n$) close to $\pi_0([ \hat p, \hat f^n(\hat p)]) \subset \pi_0(\hat P)$.

Now let $\hat q$ be the unique periodic point in $\hat X$ projecting to $q$.  We claim that $\hat q$ is close to $\hat P$. By construction, for $k\in \nn$ and $0\leq i\leq n-1$, $f^{i+kn}(q)\in L(\hat f^i \hat p)$. Since $\hat q$ is periodic we infer that for $k\in \zz$, $\pi_0 \hat f^{i+kn}(\hat q)\in L(\hat f^i \hat p)$. In particular if $i_0$ is a fixed integer we can arrange so that for $\abs{i}\leq i_0$,  $\pi_0 \hat f^{-i}(\hat q)$ is close to 
$\pi_0\hat f^{-i}(\hat P)$. Thus $\hat q$ is close to $\hat P$ relative to  the product metric on $\hat X$, as claimed.

At this stage we know that saddle periodic points accumulate everywhere on $\supp(\mu)$ and $\supp(\hat \mu)$.

\medskip

\noindent{\bf Step 3: equidistribution.} Given $\hat p\in \hat P\cap \hat f^{-n}(\hat P)$ as in Step 2, 
let $B_n^s(p)$ be the connected component of $B\cap f^{-n}B$ containing $p$. Let us first show that $B_n^s(p)\subset L_n^s(\hat p)$. For this, suppose that there exists $p'\in B_n^s(p)$ and a smallest integer $i\geq 0$ such that $f^i(p')\notin L(\hat f^i(\hat p))$. Let $\gamma$ be a path joining $p$ and $p'$; $f^i(\gamma)$ is not contained in $L(\hat f^i(\hat p))$, so it must intersect the vertical boundary of that polydisk. Hence by the H\'enon-like property, for all subsequent iterates $j\geq i$, $f^j(\gamma)$ intersects
the vertical boundary of $L(\hat f^j(\hat p))$, which contradicts the fact that $f^n(B_n^s(p))\subset B$.

Since $B_n^s(p)\subset L_n^s(\hat p)$, the behavior of $f^n$ on $B_n^s(p)$ is that of $n$ successive iterates of a H\'enon-like map of degree 1. In particular we infer that  $B_n^s(p)$ is a vertical sub-bidisk of $B$, while $f^n(B_n^s(p))$ is horizontal. Also, exactly as in 
\cite[Lemma 3]{bls2} we get that 
\begin{equation}\label{eq:Bns}
 W^s_{loc}(p)\cap B\subset B_n^s(p) \text{ and }W^u_{loc}(\hat p)\cap B_n^s(p)\subset \left(f^n\rest{L_n^s(\hat p)}\right)^{-1}
 \left(W^u_{loc}(\hat f^n \hat p)\cap B\right).
\end{equation}

The sets  $\pi_0^{-1}(B_n^s)$ induce a  partition of $\hat P\cap \hat f^{-n}(\hat P)$. If $T$ is an atom of this partition, it is clear that the construction of Step 2 associates  to all points  $\hat p\in T$ the same saddle point $q = q(T)$, and that 
the mapping $T\mapsto q(T)$ is injective. From \eqref{eq:Bns} and the fact that the local stable manifold of $\hat p$ in $\hat X$ is the 
full preimage under $\pi_0$ of $W^s_{\rm loc}(p)\subset X$, we get that $T$ has product structure.
That is, if $\hat p_1,\hat p_2\in T$, then $[\hat p_1,\hat p_2]\in T$.

\medskip

Now since  $\hat\mu$ has product structure and its unstable conditionals are induced by the current $T^+$ (Theorems \ref{thm:conditional} and \ref{thm:product}), we can reproduce \cite[Lemma 5]{bls2} and conclude that for any atom $T$ of the partition, 
$\hat\mu(T)\leq \lambda_1^{-n}\hat\mu(\hat P)$.

Let $\mathrm{SFix}_n$ be the set of saddle periodic points  of period (dividing) $n$ in $\hat X$, and 
$\hat\nu_n = \lambda_1^{-n}\sum_{q\in \mathrm{SFix}_n} \delta_q$. 
If $\hat P_\delta$ denotes the $\delta$-neighborhood  of $\hat P$, we obtain that for large $n$, 
\begin{equation}\label{eq:saddle_estimate}
 \lambda_1^{-n}\hat\mu(\hat P) \# \set{\mathrm{SFix}_n\cap \hat P_\delta} = \hat\mu(\hat P) \hat\nu_n(\hat P_\delta)
\geq \sum \hat\mu(T) = \hat\mu(\hat P\cap \hat f^{-n} \hat P) \longrightarrow \hat\mu(\hat P)^2,
\end{equation}
whence $\liminf \hat\nu_n(\hat P_\delta)\geq \hat\mu(\hat P)$.  If $\hat\nu$ is any cluster point of the sequence $\hat\nu_n$, we conclude that for any $\delta>0$, $\hat\nu(\hat P_\delta)\geq \hat\mu(\hat P)$, thus $\hat\nu(\hat P)\geq \hat\mu(\hat P)$. Since any open subset in $\hat X$ can be covered up to a set of small $\hat\mu$ measure by disjoint Pesin boxes, we conclude that $\hat\nu\geq\hat\mu$

Now assume (see Step 5 below) that we have an estimate $\#\mathrm{SFix}_n\leq \lambda_1^n+o(\lambda_1^n)$.  
Then  $\limsup \nu_n(X) \leq 1 \leq \mu(X)$.  From this and the previous paragraph we conclude that $\mu = \lim \nu_n$. 

In the (unexpected?) event that the estimate $\#\mathrm{SFix}_n\leq \lambda_1^n+o(\lambda_1^n)$ fails, then we can certainly replace $\mathrm{SFix}_n$ with a subset $\mathcal{P}_n\subset \mathrm{SFix}_n$ of size $\#\mathcal{P}_n\sim\lambda_1^n$ for which \eqref{eq:saddle_estimate} remains valid relative to a countable collection of Pesin boxes sufficient to provide disjoint ``near covers'' of any open set.  Hence the measures $\nu_n$ defined using $\mathcal{P}_n$ instead of $\mathrm{SFix}_n$ will again converge to $\mu$.

\medskip

\noindent{\bf Step 4: saddle points in $\supp(\mu)$.} We now prove that the saddle points constructed in Step 2 lie inside $\supp(\mu)$. Here the argument is identical to \cite{birat}, we sketch it for completeness (see also \cite{bls}). 

Given a Pesin box $\hat P$ as above, let $\el^s\subset X$  be the local stable lamination of $\pi_0(\hat P)$, that is, the union of local stable manifolds. Likewise we can define the local unstable web $\el^u = \bigcup_{\hat p\in \hat P}W^u_{\rm loc}(\hat p)$. Let $S^+=T^+\rest{\el^s}$ and $S^-= T^-\rest{\el^u}$. These currents are uniformly geometric and dominated by $T^+$ and $T^-$, respectively. Furthermore, from our understanding of $\mu$ we know that $S^+>0$ and $S^->0$. With notation as in Step 2, let now $g$ be the H\'enon-like map in $B$ induced by $f^n$. 
Therefore, $\lambda_1^{-nk} (g^k)^*S^+$ is a non-trivial, uniformly laminar current dominated by $T^+$.  As $n\to\infty$ its support converges in the Hausdorff sense to the local stable manifold of $q$, where $q = \bigcap_{n\in \zz} g^k(B)$ is the saddle point that we have just constructed.  Similarly, we have corresponding currents $0<\unsur{\lambda_1^{kn}}(g^k)_*S^-\leq T^-$ with supports converging to the local unstable manifold of
(the periodic history of) $\hat q$.  Hence we obtain a sequence of measures
$$
0<\mu_k = \unsur{\lambda_1^{kn}}(g^k )^*S^+ \wedge \unsur{\lambda_1^{kn}}(g^k )_*S^- \leq \mu
$$
with supports converging to $q$.  We conclude that $q\in \supp(\mu)$.

\medskip

\noindent{\bf Step 5: counting periodic points.} The second statement in Theorem \ref{thm:saddle} requires a bound of the form 
$\# Per_n\leq \lambda_1^n+o(\lambda_1^n)$ 
on the number of isolated periodic points. When $f$ has no curves of periodic points, this is classical.
The Lefschetz Fixed Point Formula \cite[p.314]{fulton} asserts that 
$$\set{\Delta}\cdot\set{\mathrm{Graph}(f^k)} = \sum_{0\leq p,q\leq 2} (-1)^{p+q}
\mathrm{Trace}\left((f^k)^*\rest{H^{p,q}(X,\re)}\right),$$ where $\Delta$ is the diagonal in $X\times X$. If $f^k$ has no curve of fixed points, 
the intersection product is the sum of the multiplicities of periodic points plus a (nonnegative) term coming from the indeterminacy set 
(see e.g. \cite[\S 8]{df}). 
All components of $\Delta\cap\mathrm{Graph}(f^k)$ have dimension 0, hence give positive contribution to the intersection product. In particular the 
left hand side of this inequality dominates the number of fixed points of $f^k$. 

On the other hand, the dominating term on the right hand side is given by the trace of the action on $H^{1,1}$, which is  $\lambda_1^n+o(\lambda_1^n)$.
Indeed, observe first that by the small topological degree assumption, 
the action on $H^2$ predominates. Now, when $X$ is rational, $\mathrm{dim} H^{0,2} = \mathrm{dim} H^{2,0} = 0$ so we are done. 
When $X$ is irrational it can be checked directly that $\norm{(f^k)^*\rest{H^{0,2}}}\lesssim \lambda_2^{n/2}$ (a general argument for this is given in \cite[Proposition 5.8]{dinh}), and we are also done in this case.

\medskip

When $f$ admits curves of periodic points and $X=\pd$ or $X=\pu\times \pu$, 
the result follows from a slightly more  sophisticated argument 
from Intersection Theory \cite[\S 12.2]{fulton}. We thank Charles Favre for indicating this to us.
Indeed in this case, the intersection in the Lefschetz fixed point formula takes place in $X\times X = \pd\times \pd$ or $(\pu\times\pu)^2$. This manifold has the property that its tangent bundle is generated by its sections (see \cite[Example 12.2.1]{fulton}), in which case the contribution of every irreducible component of $\Delta\cap\mathrm{Graph}(f^k)$ to the intersection product $\set{\Delta}\cdot\set{\mathrm{Graph}(f^k)}$ is nonnegative 
\cite[Corollary 12.2]{fulton} (see also \cite[Example 16.2.2]{fulton}). As before we conclude that the number of isolated fixed points  of
 $f^n$ is controlled by $\lambda_1^n+o(\lambda_1^n)$. 
 
Observe that if $X$ is any rational surface, by using a birational conjugacy between $f$ and a (possibly non algebraically stable) map on $\pd$, this argument shows that 
the number of isolated fixed points of $f^n$ is controlled by $C\lambda_1^n+o(\lambda_1^n)$ for some $C$.

\begin{rqe}
 Under the same assumptions as in Theorem \ref{thm:saddle} we can also adapt 
 \cite[Theorem 2]{bls2} and obtain that the Lyapunov exponents of $\mu$
 can be evaluated by averaging on saddle orbits, which is an important fact in bifurcation theory.
\end{rqe}


\begin{thebibliography}{[ABC]}
 \bibitem [BeDi]{bedford-diller} Bedford, Eric; Diller, Jeffrey. 
 {\em Energy and invariant measures for birational surface 
  maps.}   Duke Math. J.  {128}  (2005),  no. 2, 331--368
\bibitem[BS]{bs3} Bedford, Eric; Smillie, John. {\em Polynomial
diffeomorphisms of $\cc^ 2$. III. Ergodicity, exponents and entropy of
the equilibrium measure.} Math. Ann. 294 (1992),  395-420.
\bibitem[BLS1]{bls} Bedford, Eric;  Lyubich, Mikhail;  Smillie, John. {\em
Polynomial
diffeomorphisms of $\cc^ 2$. IV. The measure of maximal entropy and
laminar currents.} Invent. Math.  112  (1993), 77-125.
\bibitem[BLS2]{bls2}
Bedford, Eric;  Lyubich, Mikhail;  Smillie, John.
{\em Distribution of periodic points of polynomial diffeomorphisms of
  $\cd$.} Invent. Math.  114 (1993), 277-288.
 \bibitem[Br]{briend} Briend, Jean-Yves. {\em Propri\'et\'e de Bernoulli pour les
extensions naturelles des endomorphismes de $\mathbf{CP}^k$.}
Ergodic Theory Dynamical Systems 21 (2001), 1001-1007. 
\bibitem[BrDu]{briend-duval} Briend, Jean-Yves; Duval, Julien. {\em Deux
caract\'erisations de la mesure 
d'\'equilibre des endomorphismes de $\mathbf{CP}^k$.}
Publ. Math. Inst. Hautes {\'E}tudes Sci.  No. 93  (2001), 145-159. 
\bibitem[Ca]{cantat} Cantat, Serge. {\em Dynamique des automorphismes des
  surfaces K3.}  Acta Math. 187 (2001) 1-57.  
\bibitem[dT1]{dt-saddle} De Th\'elin, Henry. {\em Sur la construction de mesures
selles.} 
Ann. Inst. Fourier, 56 (2006), 337-372.
\bibitem[dT2]{dt-lyap} De Th\'elin, Henry. {\em Sur les exposants de Lyapounov
des applications m\'eromorphes.}  Invent. Math. 172 (2008), 89-116.
\bibitem[DV]{dv} De Th\'elin, Henry; Vigny, Gabriel. {\em Entropy of meromorphic maps and  dynamics 
of birational maps.} Preprint (2008). M\'emoires de la SMF, to appear.
\bibitem[DDG1]{part1} Diller, Jeffrey; Dujardin, Romain; Guedj, Vincent. 
{\em Dynamics of meromorphic maps with small topological degree I: from
cohomology to currents.}  Indiana Univ. Math. J., to appear.
\bibitem[DDG2]{part2} Diller, Jeffrey; Dujardin, Romain; Guedj, Vincent. 
{\em Dynamics of meromorphic maps with small topological degree II: energy and
invariant measure.} Comment. Math. Helvet., to appear.
 \bibitem [DF]{df} Diller, Jeffrey; Favre, Charles. 
 {\em  Dynamics of bimeromorphic maps of surfaces.}   
   Amer. J. Math.  {123}  (2001),  no. 6, 1135--1169.
\bibitem[Di]{dinh} Dinh, Tien Cuong. {\em Suites d'applications m\'eromorphes
multivalu\'ees et courants laminaires.}
 J. Geom. Anal.  15  (2005),  207--227. 
\bibitem[DS]{ds}  Dinh, Tien Cuong; Sibony, Nessim.
{\sl Dynamique des applications d'allure polynomiale.  }
J. Math. Pures Appl. (9) 82 (2003), 367-423.
\bibitem[Du1]{hl} Dujardin, Romain. {\em H\'enon-like mappings in $\cd$.}
Amer. J. Math. 126 (2004), 439-472.
\bibitem[Du2]{isect} Dujardin, Romain. {\em Sur l'intersection des courants
laminaires.}  Pub. Mat. 48 (2004), 107-125.   
\bibitem[Du3]{structure} Dujardin, Romain. {\em Structure properties of laminar
currents.} J. Geom. Anal. 15  (2005), 25-47.
\bibitem[Du4]{birat} Dujardin, Romain. {\em Laminar currents and birational
dynamics.}
Duke Math. J. 131 (2006), 219-247. 
\bibitem[Duv]{duval} Duval, Julien. {\em Singularit\'es des courants d'Ahlfors.}
  Ann. Sci. \'Ecole Norm. Sup. (4)  39  (2006),  527--533.
 \bibitem [FJ]{fj} Favre, Charles; Jonsson, Mattias. 
 {\em Dynamical compactifications of $\cd$.}  Preprint (2007).  Annals of Math., to appear.
 \bibitem[FS]{fs94}Fornaess, John Erik; Sibony, Nessim 
{\em Complex dynamics in higher dimension. {II}}, Modern methods in complex analysis, Ann. of Math. Studies
 135--182. Princeton Univ. Press.
\bibitem[FLM]{flm} Freire, Alexandre; Lopes, Artur; Ma\~n\'e, Ricardo
{\em An invariant measure for rational maps.}
Bol. Soc. Brasil. Mat. 14 (1983),  45--62. 
\bibitem[Fu]{fulton} Fulton, William, {\em Intersection theory.} Second edition. Ergebnisse der Mathematik und ihrer Grenzgebiete. 3. Folge. A Series of Modern Surveys in Mathematics. Springer-Verlag, Berlin, 1998.
\bibitem[Gh]{ghys} Ghys, Etienne. {\em Laminations par surfaces de
Riemann.} Dynamique et g{\'e}om{\'e}trie complexes (Lyon, 1997), 
  Panoramas et Synth{\`e}ses, 8, 1999. 
\bibitem[Gr]{gromov} Gromov, Mikhail. {\em On the entropy of holomorphic maps.}  Enseign. Math. (2)  49  (2003),  217--235.
\bibitem [G1]{g} Guedj, Vincent. {\em Dynamics of polynomial mappings of $\mathbf{C}^2$.}   
  Amer. J. Math.  {\bf 124}  (2002),  no. 1, 75--106. 
\bibitem [G2]{g05} Guedj, Vincent. 
{\em Ergodic properties of rational mappings with large
   topological degree.}  Ann. of Math. {161} (2005).
\bibitem [G3]{g05b} Guedj, Vincent. 
{\em  Entropie topologique des applications m\'eromorphes.}  
 Ergodic Theory Dynam. Systems  {25}  (2005),  no. 6, 1847--1855. 
\bibitem[Led]{ledrappier-srb} Ledrappier, Fran{\c c}ois.
 {\em Some properties of absolutely continuous
  invariant measures on an interval},  Ergodic Theory Dynam. Systems 1 (1981), 77--93.
\bibitem[LS]{ls} Ledrappier, Fran{\c c}ois; Strelcyn, Jean-Marie. 
{\em A proof of the estimation from below in Pesin's entropy formula.} 
Ergodic Theory Dynam. Systems  2  (1982), 203--219.  
\bibitem[Lel]{lelong} Lelong, Pierre. {\em Propri\'et\'es m\'etriques des
vari\'et\'es analytiques complexes
 d\'efinies par une \'equation.} Ann. Sci. \'Ecole Norm. Sup. (3)  67,  (1950).
393--419.
\bibitem[Ly]{lyubich} Lyubich, Mikhail
{\em  Entropy properties of rational endomorphisms of the Riemann  sphere.} 
  Ergodic Theory Dynam. Systems  3  (1983),  351-385.
\bibitem[MU]{murb} Mihailescu, Eugen; Urba\'nski, Mariusz. {\em Holomorphic maps
for which the unstable manifolds depend on histories.}  Discrete Contin. Dyn.
Syst.  9  (2003),  no. 2, 443--450. 
\bibitem[MS]{MS} Moore, Calvin C.; Schochet, Claude. {\em Global analysis on
foliated spaces.} Mathematical Sciences Research Institute Publications, 9.
Springer-Verlag, New York, 1988. 
\bibitem[OW]{ow} Ornstein, Donald; Weiss, Benjamin. {\em On the Bernoulli nature
of systems with some hyperbolic structure.}
  Ergodic Theory Dynam. Systems  18  (1998),   441--456.
\bibitem[Pr]{prz} Przytycki, Feliks. {\em Anosov endomorphisms.} Studia Math. 58
(1976), 249--285.
\bibitem[PU]{PU} Przytycki, Feliks;  Urba\'nski, Mariusz. {\em Conformal fractals, Ergodic Theory methods.} Book to appear,
 available online at 
{\tt http://www.math.unt.edu/$\sim$urbanski/book1.html}
\bibitem[QZ]{qian-zhu} Qian, Min; Zhu, Shu.
{\em SRB measures and Pesin's entropy formula for endomorphisms.} Trans. Amer.
Math. Soc. 354 (2002), 1453--1471.
\bibitem[Ro]{rokhlin} Rohlin, V. A. {\em Lectures on the entropy theory of
transformations with invariant measure.}  Uspehi Mat. Nauk  22  1967 no. 5
(137), 3--56. Transl. in Russian Math. Surveys 22 (1967), no. 5, 1--52.
\bibitem[Y]{y}  Yomdin, Yuri. {\em Volume growth and entropy.} 
Israel J. Math. 57 (1987), 285-300.

\end{thebibliography}
\end{document}